\numberwithin{section}{chapter}
\numberwithin{figure}{chapter}
\newtheorem{theorem}{Theorem}[section]
\newtheorem{cor}[theorem]{Corollary}
\newtheorem*{lemma}{Lemma}
\theoremstyle{remark}
\theoremstyle{definition}
\newtheorem{defn}[theorem]{Definition}
\numberwithin{equation}{section}
\DeclareMathOperator{\dr}{dr}
\DeclareMathOperator{\pk}{pk}
\DeclareMathOperator{\df}{df}
\DeclareMathOperator{\pr}{MP}
\begin{document}


\frontmatter

\makedisstitle



\disscopyright 

\begin{dissdedication}
\vspace{2cm}
\begin{center}
To My Parents
\end{center}
\end{dissdedication}

\begin{dissacknowledgments} 
I wish to express my heartful gratitude to my advisor, Professor Ira M. Gessel, for his teaching, help, guidance, patience, and support.

I am grateful to the members of my dissertation defense committee Professor Richard P. Stanley and Professor Susan F. Parker. Specially I'm greatly indebted to Professor Parker for her continual encouragement and mental support. I learned a great deal from her about teaching and mentoring. 
  
I owe thanks to the faculty, specially Professor Mark Adler and Professor Daniel Ruberman,  my dear friend Apratim Roy for his helpful suggestions, to my fellow students, and to the kind and supportive staff of the Brandeis Mathematics Department.

I would like to thank all my family and friends for their love and encouragement with patience and I wish to express my boundless love to my wife, Arifun Chowdhury.

This thesis is dedicated to my parents, Md. Enamul Huq and Mahbub Ara Ummeh Sultana, with my deep gratitude. 
\end{dissacknowledgments}

\begin{dissabstract}
In this thesis we develop generalized versions of the Chung-Feller
theorem for lattice paths constrained in the half plane. The beautiful cycle method which was developed by Devoretzky and Motzkin as a means to prove the ballot problem is modified and applied to generalize the classical Chung-Feller theorem. We use Lagrange inversion to derive the generalized formulas. For the generating function proof we study various ways of decomposing lattice paths. We also show some results related to equidistribution properties in terms of Narayana and Catalan generating functions. 
We then develop generalized Chung-Feller theorems for Motzkin and Schr\"oder paths. Finally we study generalized paths and the analogue of the Chung-Feller theorem for them.
\end{dissabstract}


\tableofcontents 

\listoffigures 


\mainmatter

\chapter{Introduction}
In discrete mathematics, all sorts of constrained lattice paths serve to describe apparently complex objects. The simplest lattice path problem is the problem of counting paths in the plane, with unit east and north steps, from the origin to the point $(m, n)$. The number of such paths is the binomial coefficient $\binom{m+n}{n}$. We can find more interesting problems by counting these paths according to certain parameters like the number of left turns (an east step followed by a north step), the area between the path and the $x$-axis, etc. If $m = n$ then the classical Chung-Feller theorem \cite{hh} tells us that the number of such paths with $2k$ steps above the line $x = y$ is independent of $k$, for $k = 0, \dots, n$ and is therefore equal to the Catalan number $C_n =\tfrac{1}{n+1} \binom{2n}{n}$. The simplest, and most fundamental, result of lattice paths constrained in a subregion of the plane is the solution of the ballot problem: the number of paths from $(1, 0)$ to $(m, n)$, where $m > n$, that never touch the line $x = y$, is the ballot number $\tfrac{m-n}{m+n} \binom{m+n}{n}$. In the special case $m = n + 1$, this ballot number is the Catalan number $C_n$. The corresponding paths are often redrawn as paths with northeast and southeast steps that never go below the $x$-axis; these are called Dyck paths:

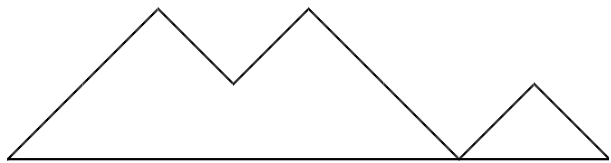
\begin{figure}[ht]
\begin{center}
\ifx\JPicScale\undefined\def\JPicScale{1}\fi
\unitlength \JPicScale mm
\begin{picture}(90,30)(0,0)
\linethickness{0.4mm}
\multiput(10,10)(0.12,0.12){83}{\line(1,0){0.12}}
\linethickness{0.4mm}
\multiput(20,20)(0.12,0.12){83}{\line(1,0){0.12}}
\linethickness{0.4mm}
\multiput(30,30)(0.12,-0.12){83}{\line(1,0){0.12}}
\linethickness{0.4mm}
\multiput(40,20)(0.12,0.12){83}{\line(1,0){0.12}}
\linethickness{0.4mm}
\multiput(50,30)(0.12,-0.12){83}{\line(1,0){0.12}}
\linethickness{0.4mm}
\multiput(60,20)(0.12,-0.12){83}{\line(1,0){0.12}}
\linethickness{0.4mm}
\multiput(70,10)(0.12,0.12){83}{\line(1,0){0.12}}
\linethickness{0.4mm}
\multiput(80,20)(0.12,-0.12){83}{\line(1,0){0.12}}
\linethickness{0.3mm}
\put(10,10){\line(1,0){80}}
\end{picture}
\vspace{-.8cm}
	\caption{A Dyck path}
\end{center}
\end{figure}

Dyck paths are closely related to traversal sequences of general and binary trees; they belong to what Riordan has named the ``Catalan domain", that is, the orbit of structures counted by the Catalan numbers. The wealth of properties surrounding Dyck paths can be perceived when examining either Gould's monograph \cite{gould} that lists $243$ references or from Exercise 6.19 in Stanley's book \cite{dd} whose statement alone spans more than $10$ full pages.

The classical Chung-Feller theorem was proved by Major Percy A. MacMahon in 1909 \cite{mc}. 
Chung and Feller reproved this theorem by using the generating function method in \cite{hh} in 1949. T. V. Narayana \cite{kk} showed the Chung-Feller theorem by combinatorial methods. Mohanty's book \cite{moh} devotes an entire section to exploring the Chung-Feller theorem. S. P. Eu et al. \cite{ff} proved the Chung-Feller Theorem by using Taylor expansions of generating functions and gave a refinement of this theorem. In \cite{ii}, they gave a strengthening of the Chung-Feller theorem and a weighted version for Schr\"oder paths. Both results were proved by refined bijections which are developed from the study of Taylor expansions of generating functions. Y. M. Chen \cite{gg} revisited the Chung-Feller theorem by establishing a bijection. David Callan in \cite{dc} and R. I. Jewett and K. A. Ross in \cite{jr} also gave bijective proofs of the Chung-Feller theorem. J. Maa and Y.-N. Yeh studied Chung-Feller Theorem for the non-positive length and the rightmost minimum length in \cite{ma-2009}.

Therefore generalizations of the Chung-Feller theorem have been visited by several authors as described above. But the most interesting aspect of the Chung-Feller theorem was the interpretation of the Catalan number formula $\tfrac{1}{n+1} \binom{2n}{n}$ that explained the appearence of the fraction $\tfrac{1}{n+1}$. However there are two other equivalent forms of the Catalan number formula which do not fit into the classical version of the Chung-Feller theorem. Moreover there are several other kinds of lattice paths like Motzkin paths, Schr\"oder paths, Riordan paths, etc. and associated number formulas and equivalent forms that have not been studied using generalized versions of the Chung-Feller theorem. 

The same can be said about their higher-dimensional versions \cite{higherdim} and $q$-analogues. 
For that reason the main purpose of this thesis is to find more systematic generalizations of the Chung-Feller theorem. We apply the cycle method to this problem. 

In the next section we present the classical Chung-Feller theorem along with the definitions and notations that we'll use. In chapter two we give the modified cycle method and the notion of special vertices and use that  to derive the generalized Chung-Feller theorems for Catalan and Narayana number formulas. Chapter three deals with generalized Chung-Feller theorems for Motzkin, Schr\"oder, and Riordan number formulas. In chapter four we use generating functions to prove generalized Chung-Feller theorems for Catalan and Narayana numbers and also describe the equidistribution property of left-most highest points and up steps in even positions for paths that end at height one and height two respectively. In chaper five we develop generalized Chung-Feller theorems for generalized Catalan and Narayana number formulas. 

\clearpage

\section{Lattice paths and the Chung-Feller theorem}\label{sec:first}

	In this section we present the varieties of lattice paths to be studied and restate the Chung-Feller theorem with proofs. We begin with the formal definition of the paths that we will be dealing with. 

\begin{defn} Fix a finite set of vectors in $\mathbb{Z} \times \mathbb{Z}$, $\mathcal{V}= \{(a_1,b_1), \dots, 	
	(a_m,b_m)  \}$. A {\it lattice path} with steps in $\mathcal{V}$ is a sequence $v = (v_1, \dots, v_n )$ such that 
	each $v_j$ is in $\mathcal{V}$. The geometric realization of a lattice path $v = (v_1, \dots, v_n )$ is the 
	sequence of points $(P_0, P_1, \dots, P_n )$ such that $P_0=(0,0)$ and 
	$P_i - P_{i-1} = v_i$. The quantity $n$ is referred to as the length
of the path.
\end{defn}
 
	In the sequel, we shall identify a lattice path with the polygonal line admitting $P_0, P_1, \dots, P_n $  as
	vertices. The elements of $\mathcal{V}$ are called steps, and we also refer to the vectors 
	$P_{i} - P_{i-1} = v_i$ as the steps of a particular path. Various constraints will be imposed on paths. We 
	consider the following condition on the paths we'll concern
ourselves with. 

\begin{defn}
	Let $\mathcal{P}(n, r, h)$ be the set of paths (referred to simply as {\it paths}) having the step set 
	$S = \{(1,1), (1,-r)\}$ that lie in the half plane $\mathbb{Z}_{\ge 0}\times \mathbb{Z}$ ending at
	$((r+1)n+h,h)$, where we call $n$ the semi-length. We denote by
$\mathcal{P}(n, 1, 0, +)$ the paths in 
	$\mathcal{P}(n, 1, 0)$ that lie in the quarter plane 
	$\mathbb{Z}_{\ge 0}\times \mathbb{Z}_{\ge 0}$. They are known as {\it Dyck paths} (we'll also refer to them
	as {\it positive paths}). We also denote by 
	$\mathcal{P}(n, 1, 0, -)$ the set of {\it negative paths} which 
	are just the reflections of $\mathcal{P}(n,1,0,+)$ about the $x$-axis. 
\end{defn}

	A lot of effort has been given to enumerating the above mentioned paths according to different parameters 
	and with restrictions. We know that the total number of Dyck paths of length $2n$ is given by the Catalan 
	number $C_n$ and the well known Chung-Feller theorem \cite{hh}, stated below, gives a nice combinatorial 
	interpretation for the Catalan number formula which generalizes the enumeration of Dyck paths.  

\begin{theorem}{\bf (Chung-Feller)} Among the $\binom{2n}{n}$ paths from $(0,0)$ to $(2n,0)$, the number of 
	paths with $2k$ steps lying above the $x$-axis is independent of $k$ for $0 \le k \le n$, and is equal to $\tfrac{1}{n+1} \binom{2n}{n}$.
\end{theorem}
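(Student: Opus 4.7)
The plan is to apply the cycle method of Dvoretzky and Motzkin, in the spirit of the modification the author promises to develop in Chapter~2. The goal is to partition $\mathcal P=\{P:(0,0)\to(2n,0)\}$ into $C_n$ orbits of size $n+1$ under an appropriate cyclic action, in such a way that the $n+1$ members of a given orbit realize the $n+1$ possible values of the statistic $a(P)\in\{0,2,\ldots,2n\}$ -- where $a(P)$ denotes the number of steps of $P$ lying above the $x$-axis -- exactly once apiece. Once the partition is in place, each value $2k$ is hit by exactly $C_n$ paths and the theorem follows immediately.

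To define the cyclic action, I would adjoin a single up step to each $P\in\mathcal P$ to form $\tilde P=v_1\cdots v_{2n}U$, a sequence of length $2n+1$ with $n+1$ up steps and $n$ down steps, ending at height~$1$. Because $\gcd(2n+1,n+1)=1$, the sequence $\tilde P$ admits no proper cyclic symmetry, so its cyclic orbit contains exactly $2n+1$ distinct sequences; among these the classical cycle lemma distinguishes a single representative $Q$ whose partial sums are all strictly positive, and deleting the first step of $Q$ and shifting down yields a Dyck path $\pi(P)$ of semilength $n$. This $\pi(P)$ will be the invariant of the orbit, and my claim is that the fiber of $\pi$ over each Dyck path has size exactly $n+1$.

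Within the $2n+1$ rotations of $\tilde P$, exactly $n+1$ of them end in an up step (one for each of the up steps of $Q$), and deleting this terminal $U$ from each gives back a path in $\mathcal P$. I would then show these $n+1$ paths are pairwise distinct, constitute the full fiber of $\pi$, and partition $\mathcal P$ as $\pi(P)$ ranges over the $C_n$ Dyck paths of semilength $n$. The essential step is then to prove that as we move the cut point through the $n+1$ up-step positions of $Q$, the statistic $a$ takes every value in $\{0,2,\ldots,2n\}$ exactly once. The natural strategy is to track how $a$ changes when the cut is advanced past a single contiguous block of $Q$ (a $U$ together with the ensuing run of $D$'s): relocating that block from the beginning to the end of the resulting length-$2n$ path flips the sign of its contribution to $a$, and a careful accounting of these signed contributions should force the $a$-values across the orbit to be a permutation of $\{0,2,\ldots,2n\}$.

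The main obstacle will be precisely this last bookkeeping -- turning the intuitive ``slide one block at a time'' argument into a proof that the $n+1$ values of $a$ along the orbit are distinct and exhaust $\{0,2,\ldots,2n\}$. As a fallback I would keep in reserve the generating function proof: decomposing each $P\in\mathcal P$ into a sequence of positive arches (each weighted $xyC(xy)$) and negative arches (each weighted $xC(x)$) yields
\[
L(x,y)=\frac{1}{1-xyC(xy)-xC(x)},
\]
where $C(x)=\sum_{n\ge 0} C_n x^n$ is the Catalan generating function, and the identity $xC(x)^2=C(x)-1$ rearranges this closed form into $(C(x)-yC(xy))/(1-y)=\sum_n C_n x^n(1+y+\cdots+y^n)$, from which the theorem follows at once.
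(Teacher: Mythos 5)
Your primary (cycle-method) route sets up the right orbit structure---adjoin a $U$, note that the $2n+1$ rotations of the resulting height-one sequence are pairwise distinct, and single out the $n+1$ rotations ending in an up step as the fiber over a Dyck path---and this is the same skeleton the thesis uses in Theorems~\ref{t2}, \ref{spver} and \ref{t5}(1), where one instead takes the $n+1$ conjugates \emph{beginning} with an up step and counts up steps starting on or below the axis. But you stop exactly at the theorem's real content: you say a ``slide one block at a time'' bookkeeping ``should force'' the values of $a$ to be a permutation of $\{0,2,\dots,2n\}$, and you flag this as the main obstacle without resolving it. That is a genuine gap, not a routine verification---distinctness of the $n+1$ values across the orbit \emph{is} the theorem, and the signed increments produced by moving a block are not obviously nonzero or of constant sign. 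The paper closes this with a different device: for a step sequence $(a_1,\dots,a_N)$ summing to $1$ it sets $S_i=a_1+\cdots+a_i-\tfrac{i}{N}$, observes that the $S_i$ have distinct fractional parts and hence are pairwise distinct, and shows that $S_i<S_j$ forces the conjugate cut at $j$ to have strictly more nonpositive partial sums than the one cut at $i$; this monotonicity pins the multiset of values down to $\{1,\dots,N\}$ with no induction on block moves. If you want to complete your route, prove that lemma (or its special-vertex form, Theorem~\ref{spver}) in place of the block-sliding claim; you would also still owe the translation from ``$k$ special vertices on or below the axis'' in the augmented height-one path to ``$2k$ steps above the axis'' in the truncated path, which the paper handles by noting that below the axis the numbers of up steps and down steps coincide.

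Your fallback, by contrast, is a complete and correct proof, and it is essentially the paper's own generating-function argument for Theorem~\ref{e13}(1): the decomposition into positive and negative primes gives $1/(1-xy\,c(xy)-x\,c(x))$, and the identity $x\,c(x)^2=c(x)-1$ converts this to $\bigl(c(x)-y\,c(xy)\bigr)/(1-y)=\sum_{n}C_n x^n(1+y+\cdots+y^n)$, which is the paper's identity \eqref{e13a} after the substitution $y\mapsto xy$. So the proposal does contain a valid proof of the statement---just not the one you lead with.
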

The Chung-Feller theorem only deals with paths having steps of the form 
	$(1,1)$ and $(1,-1)$ whereas the cycle lemma, first introduced by Dvoretzky and Motzkin \cite{ll}, gives us an indication that a generalized Chung-Feller theorem might exist that can take into account more general paths. 

	If we let $k=n$ so that all the steps lie above the $x$-axis then we just get the Dyck paths. There are two 
	other equivalent expressions for the Catalan number 
	$C_{n}\!: \tfrac{1}{2n+1} \binom{2n+1}{n} $ and $\tfrac{1}{n} \binom{2n}{n-1}$, which await similar combinatorial interpretations. David Callan \cite{dcal} gave an interpretation of these forms using paths that end at different heights. In the next section we give a general method for explaining formulas like this. In all cases we count paths that end at $(2n+1, 1)$. Our interpretation shows that the formula $ \tfrac{1}{2n+1} \binom{2n+1}{n} $ corresponds to counting all such paths according to the number of points on or below the $x$-axis, $\tfrac{1}{n+1} \binom{2n}{n}$ corresponds to counting such paths starting with an up step according to the number of up steps starting on or below the $x$-axis and $\tfrac{1}{n} \binom{2n}{n-1}$ corresponds to counting such paths starting with a down step  according to the number of down steps starting on or below the $x$-axis.


\clearpage

\chapter{A generalized Chung-Feller theorem}
\section{The cycle method}
An important method of counting lattice paths is the ``cycle lemma" of Dvoretzky and Motzkin \cite{ll}. It may be stated in the following way: For any $n$-tuple $(a_1 , a_2 , \dots, a_n )$ of integers from the set $\{1, 0, -1, -2, \dots \}$ with sum $k > 0$, there are exactly $k$ values of $i$ for which the cyclic permutation $(a_i , \dots, a_n , a_1 , \dots, a_{i-1} )$ has every partial sum positive. The special case in which each $a_i$ is either $1$ or $-1$ gives the solution to the ballot problem. The Chung-Feller theorem, and some of its generalizations, can be proved by a variation of the cycle lemma. It is worth noting here that Dvoretzky and Motzkin \cite{ll} stated and proved the cycle lemma as a means of solving the ballot problem. Dershowitz and Zaks \cite{zak} pointed out that this is a ``frequently rediscovered combinatorial lemma" and they provide two other applications of the lemma. They stated that the cycle lemma is the combinatorial analogue of the Lagrange inversion formula.

We are going to apply the ``cycle method"  to develop generalized Chung-Feller theorems. This approach was first used by Narayana \cite{kk} in a less transparent way to prove the original Chung-Feller theorem. We'll use sequences instead of paths to prove the theorem to make things easier. We define the cyclic shift $\sigma$ on sequences ${\bf a} = (a_1,a_2,\dots,a_n)$ by
	\[\sigma(a_1,a_2,\dots,a_n)=(a_2,a_3,\dots,a_n,a_1). \]
A conjugate of $(a_1, a_2,\dots, a_n)$ is a sequence of the form 
	\[\sigma^i (a_1, a_2, \dots, a_n) = (a_{i+1}, a_{i+2}, \dots, a_n, a_1, \dots, a_i)\] for some $i$.
With these definition we state a variation of the cycle lemma. 
\begin{theorem} \label{t2}
    	Suppose that $a_1 + a_2 + \cdots +  a_n = 1$ where each $a_i \in \mathbb{Z}, i=1,\dots,n$. Then for each 
	$k$, $1\le k \le n$, there is exactly one conjugate of the sequence ${\bf a}=(a_1,\dots,a_n)$ with exactly $k$ 
	nonpositive partial sums.
\end{theorem}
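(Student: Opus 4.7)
The plan is to recast the count of nonpositive partial sums as a height-based statistic on a periodic extension of the path of partial sums, and then recognize it as a rank function for an auxiliary total order.

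First I would set $s_0 = 0$, $s_j = a_1 + \cdots + a_j$ for $1 \le j \le n$, and extend to all $m \ge 0$ by declaring $s_{m+n} = s_m + 1$; this is consistent because the sum of one period is $1$. The partial sums of the shifted sequence $\sigma^i(\mathbf{a})$ are then the differences $s_{i+j} - s_i$, so the count of nonpositive partial sums of $\sigma^i(\mathbf{a})$ (with the convention that includes the empty sum $0$) becomes
\[ N(i) := \#\{m : i \le m \le i+n-1,\ s_m \le s_i\}. \]
Since $m = i$ always contributes and the window has $n$ elements, $N(i) \in \{1, 2, \dots, n\}$, and the theorem reduces to showing that the map $i \mapsto N(i)$ is a bijection from $\{0, 1, \dots, n-1\}$ onto $\{1, 2, \dots, n\}$.

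For this I would split the window at $n-1$ and use the periodic identity $s_{r+n} = s_r + 1$ (together with integrality of partial sums, so that $s_r + 1 \le s_i$ is the same as $s_r < s_i$) to rewrite
\[ N(i) = \#\{r \in [i, n-1] : s_r \le s_i\} + \#\{r \in [0, i-1] : s_r < s_i\}. \]
This count is exactly the rank of $i$ under the total order $\preceq$ on $\{0, 1, \dots, n-1\}$ defined by $r \preceq i$ iff $s_r < s_i$, or $s_r = s_i$ with $r \ge i$. Rank functions of total orders on $n$-element sets are bijections onto $\{1, \dots, n\}$, so the theorem follows.

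The main obstacle, I expect, is locating the correct tie-breaking rule in $\preceq$. Sorting partial sums by value alone fails whenever two of them agree, and the convention ``larger index comes first'' is not arbitrary: it is forced by exactly the $+1$ contribution that appears when the window wraps past $n$. Once that rule is identified, checking that $\preceq$ is a total order and that it reproduces $N(i)$ is routine, and the bijection $i \mapsto N(i)$ drops out.
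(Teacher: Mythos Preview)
Your argument is correct, and at bottom it is the same idea as the paper's proof, though packaged differently. The paper introduces the tilted sums $S_i(\mathbf a)=s_i-i/n$; these are distinct real numbers (they have distinct fractional parts), and the paper shows that $S_i(\mathbf a)<S_j(\mathbf a)$ forces $\sigma^j(\mathbf a)$ to have strictly more nonpositive partial sums than $\sigma^i(\mathbf a)$, so pigeonhole yields the bijection. Your total order $\preceq$ is exactly the order induced by these tilted sums: one checks that $S_r<S_i$ holds iff $s_r<s_i$, or $s_r=s_i$ and $r>i$, which is precisely your relation $r\prec i$. Thus the tilt by $i/n$ is a slick device for encoding your tie-breaking rule into a single real number, and your explicit rank computation replaces the paper's monotonicity-plus-pigeonhole step. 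What each buys: the paper's version never has to name the tie-break or verify the order axioms, while yours makes the bijection $i\mapsto N(i)$ completely explicit as a rank function rather than inferring it from strict monotonicity.
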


\begin{proof}
	We define 
	$S_i ({\bf a})$ to be $a_1 + \cdots + a_i -\tfrac{i}{n}$ for $0\le i \le n$. Note that 
	$S_0 ({\bf a})=S_n({\bf a}) = 0$ 
	and it is clear that for $0\le i \le n-1$, $S_i({\bf a}) \le 0$ if and
only if $a_1 + \cdots + a_i \le 0$. Let
	us also define $a_j$ for $j>n$ or $j \le 0$ by setting $a_j=a_i$ whenever 
	$j \equiv i \pmod{n}$. So, $S_i ({\bf a})$ is defined for all $i \in {\mathbb Z}$; i.e., if 
	$j \equiv i \pmod{n} $ then $S_j = S_i ({\bf a})$.

	We observe that since the fractional parts of $S_0({\bf a}), \dots , S_{n-1} ({\bf a})$ are all different, all 
	$S_i ({\bf a}), 0 \le i \le n-1$, are distinct.

	To prove the theorem it is enough to show that if $S_i ({\bf a}) < S_j ({\bf a})$ then $\sigma^j ({\bf a})$ has 
	more nonpositive partial sums than  $\sigma^i ({\bf a})$, since the number of nonpositive partial sums is in 
	$\{1, 2, \dots, n\}$.
Suppose that $S_i ({\bf a}) < S_j ({\bf a})$. Then we have
\begin{equation}
	\label{a} 
	S_k (\sigma^j ({\bf a})) = S_k ((a_{j+1}, \dots, a_n, a_1, \dots, a_j)) =
	a_{j+1} + \cdots+a_{j+k}- \tfrac{k}{n} 
\end{equation}
	and
\begin{equation}
	S_{k+j-i} (\sigma^i ({\bf a})) = S_{k+j-i} (a_{i+1},  \dots, a_n, a_1, \dots,  a_i) = 
	a_{i+1} + \cdots+a_{j+k}- \tfrac{k+j-i}{n}.
\end{equation}
This is true even if $j+k>n$. So,
\begin{align}
    	S_{k+j-i} (\sigma^i ({\bf a})) - S_k (\sigma^j ({\bf a}))  \notag
    	&= ( a_1 + \cdots + a_j- \tfrac{j}{n}) - (a_1 + \cdots + a_i - \tfrac{i}{n})\\ \notag
    	&= S_j ({\bf a}) - S_i ({\bf a})\\
    	&>0.
\end{align}
So if $S_{k+j-i} (\sigma^i ({\bf a})) \le 0$ then $S_{k} (\sigma^j ({\bf a})) < S_{k+j-i} (\sigma^i ({\bf a})) \le 0$. 
Moreover for $k=0$, we have  
\[ S_{j-i} (\sigma^i ({\bf a})) - S_{0} (\sigma^j ({\bf a})) > 0. \]

Since $S_{0} (\sigma^j ({\bf a}))=0$, this shows that $\sigma^j ({\bf a})$ has at least one more nonpositive partial sum than $\sigma^j ({\bf a})$.
\end{proof}

	We can give a geometric interpretation of this result in terms of lattice paths that will make it easier to understand.
	\begin{figure}[h!]
	\centering
 \includegraphics[width=.8\textwidth]{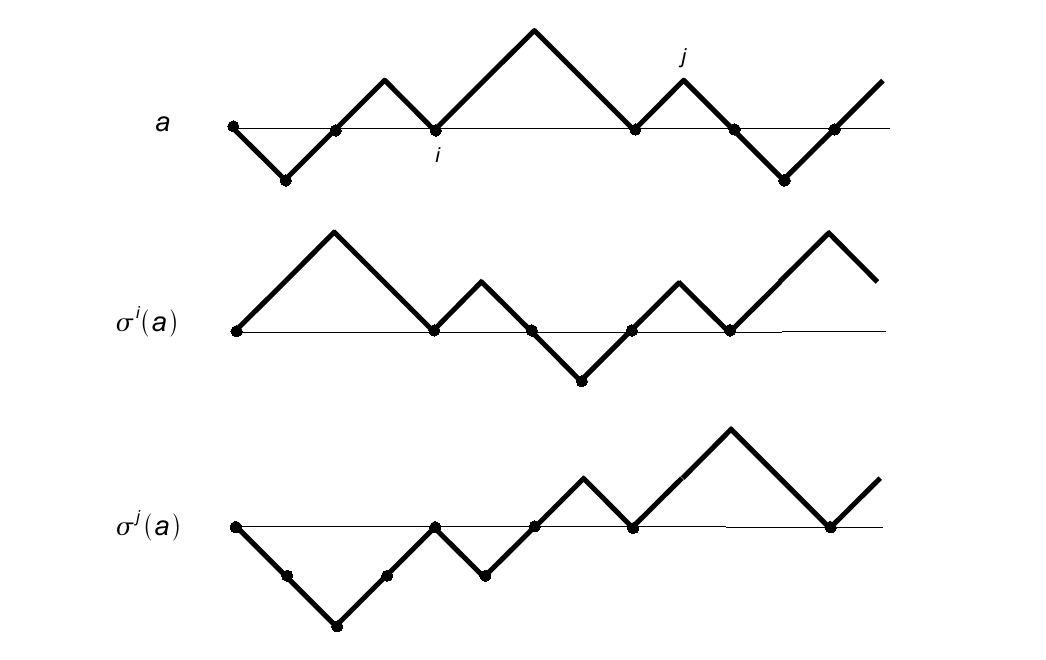}
	\caption{Two cyclic shifts of a sequence {\bf a} represented by a path}
	\label{fig: cyclic shift}
\end{figure}

	We can associate to a sequence $(a_1,a_2,\dots,a_n)$ a path
	$p=(p_1,p_2,\dots,p_n)$ in which $p_i$ is $(1,a_i)$ which is 
	either an up step that goes up by $a_i$,
	a flat step, or a down step that goes down by $-a_i$, whenever
   	$a_i$ is positive, zero, or negative respectively. Since
   	$a_1 + a_2 + \cdots +  a_n = 1$, the path ends at height $1$
   	and the nonpositive partial sums correspond to vertices of the
   	path on or below the $x$-axis. We define a conjugate of a path
   	$p=(p_1,p_2,\dots,p_n)$ to be a path of the form
   	$\sigma^i(p)=(p_{i+1}, \dots, p_{n}, p_1, \dots, p_i)$. 
   	
   	With these definitions a special case of  Theorem \ref{t2} can be stated as follows:
\begin{theorem} \label{t3}	For a path in $\mathcal{P}(n, 1, 1)$ that starts at $(0,0)$ and ends at
	height $1$ there is exactly one conjugate of $p$ with
	exactly $k$ vertices on or below the $x$-axis for each $k$, $1\le
	k \le n$.
\end{theorem}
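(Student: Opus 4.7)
My plan is to derive Theorem \ref{t3} as a direct path-theoretic transcription of Theorem \ref{t2}, with essentially no new combinatorial work beyond setting up the encoding. A path $p \in \mathcal{P}(n, 1, 1)$ consists of $2n + 1$ steps, each of the form $(1, \pm 1)$, ending at height $1$. Writing $p_i = (1, a_i)$ associates to $p$ the sequence ${\bf a} = (a_1, \ldots, a_{2n+1}) \in \{-1, 1\}^{2n+1}$; the endpoint condition forces $a_1 + a_2 + \cdots + a_{2n+1} = 1$, so ${\bf a}$ satisfies the hypothesis of Theorem \ref{t2}.

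The cyclic shift on paths defined just before the theorem statement is identical, under the encoding $p \leftrightarrow {\bf a}$, to the cyclic shift $\sigma$ on sequences used in Theorem \ref{t2}, so conjugates of $p$ correspond bijectively to conjugates of ${\bf a}$. Moreover, the height of the vertex $P_i$ (for $i \ge 1$) is exactly the $i$-th partial sum $a_1 + \cdots + a_i$, so the condition ``$P_i$ lies on or below the $x$-axis'' is literally the condition that the $i$-th partial sum is nonpositive. Applying Theorem \ref{t2} to ${\bf a}$ thus immediately yields, for each admissible $k$, a unique conjugate of $p$ with exactly $k$ vertices among $P_1, \ldots, P_{2n+1}$ on or below the $x$-axis.

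The only point requiring any care is bookkeeping around the origin: $P_0$ itself sits on the $x$-axis, but corresponds to the partial sum $S_0 = 0$ that Theorem \ref{t2} deliberately excludes from the count. One therefore either omits $P_0$ when counting on-or-below vertices, or shifts the range of $k$ accordingly. Beyond this indexing convention there is no real obstacle; the statement is simply Theorem \ref{t2} read through the dictionary between $\pm 1$ sequences of sum $1$ and paths in $\mathcal{P}(n, 1, 1)$.
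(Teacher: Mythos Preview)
Your proposal is correct and is exactly how the paper handles it: Theorem \ref{t3} is presented as nothing more than a special case of Theorem \ref{t2}, with no separate proof beyond the sequence-to-path dictionary you describe. One small correction on the bookkeeping: the proof of Theorem \ref{t2} actually \emph{includes} the index $i=0$ (the empty partial sum, always $\le 0$) in its count of nonpositive partial sums---this is precisely why the range begins at $k=1$---so $P_0$ should simply be counted as a vertex on or below the $x$-axis, and no shift of the range is required.
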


Figure \ref{fig: cyclic shift} illustrates the nonpositive sums given in the proof as the vertices of the path on or below the $x$-axis.

\clearpage
\section{Special vertices}\label{sec:second}
		We can extend $\sigma$ in a natural 
	way to the vertices of paths, so that a vertex $v$ of a path $p$ 
	corresponds to the vertex $\sigma^j (v)$ of the path  $\sigma^j (p)$.
	For each path $p$ we take a subset of the vertex set of $p$ 
	which we call the set of {\it special vertices} of $p$. We require 
	that special vertices are preserved by cyclic permutation, so that $v$ 
	is a special vertex of $p$ if and only if $\sigma^j (v)$ is a special 
	vertex of $\sigma^j (p)$. Unless otherwise stated we will not include 
	the last vertex as a special vertex.

\begin{theorem} \label{spver} 
	Suppose $p$ has $k$ special vertices.
	Let $\sigma^{t_1}(p),\dots, \sigma^{t_{k}}(p)$ be the $k$ conjugates 
	of $p$ that start with a special vertex. For each such path let $X(\sigma^i (p))$
	be the number of special vertices on or below the $x$-axis. Then
\begin{equation}
	\{ X(\sigma^{t_1} (p)), X(\sigma^{t_2} (p)), \dots, 
	 X(\sigma^{t_{k}} (p)) \} = \{1, 2, \dots, k\}.
\end{equation}
\end{theorem}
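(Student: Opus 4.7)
The plan is to refine the argument of Theorem \ref{t2} by restricting attention from all $n$ vertices to the $k$ special ones. I would re-use the auxiliary function $S_i(\mathbf{a}) = a_1 + \cdots + a_i - i/n$ (extended periodically to $i \in \mathbb{Z}$), together with the already-established facts that $S_0(\mathbf{a}), \dots, S_{n-1}(\mathbf{a})$ are pairwise distinct and that a vertex at cyclic position $i \in \{0,\dots,n-1\}$ of a conjugate lies on or below the $x$-axis iff the corresponding $S$-value is $\le 0$. In particular, the $k$ numbers $S_{t_1}(\mathbf{a}), \dots, S_{t_k}(\mathbf{a})$ carry a well-defined strict linear order.

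The core step is a monotonicity claim: whenever $S_{t_i}(\mathbf{a}) < S_{t_j}(\mathbf{a})$, one has $X(\sigma^{t_j}(p)) \ge X(\sigma^{t_i}(p)) + 1$. This will come from the identity established in the proof of Theorem \ref{t2}, which in the present notation reads
\[
S_{m + t_j - t_i}(\sigma^{t_i}(\mathbf{a})) - S_m(\sigma^{t_j}(\mathbf{a})) = S_{t_j}(\mathbf{a}) - S_{t_i}(\mathbf{a}) > 0
\]
for every integer $m$ (indices cyclic). Reading this at the cyclic position $m$ occupied by an arbitrary special vertex in $\sigma^{t_j}(p)$---so that the same vertex sits at position $m + t_j - t_i$ in $\sigma^{t_i}(p)$---says that the height of each special vertex is strictly smaller in $\sigma^{t_j}(p)$ than in $\sigma^{t_i}(p)$. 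Hence every special vertex on or below the $x$-axis in $\sigma^{t_i}(p)$ stays on or below it in $\sigma^{t_j}(p)$, yielding $X(\sigma^{t_j}(p)) \ge X(\sigma^{t_i}(p))$. To upgrade this to the strict inequality, I would single out the special vertex at position $t_j$ of $p$, which is the starting vertex of $\sigma^{t_j}(p)$ and therefore lies on the $x$-axis there. In $\sigma^{t_i}(p)$ the same vertex sits at cyclic position $t_j - t_i$, where the identity at $m = 0$ gives $S_{t_j - t_i}(\sigma^{t_i}(\mathbf{a})) = S_{t_j}(\mathbf{a}) - S_{t_i}(\mathbf{a}) > 0$, so it is strictly above the $x$-axis and not counted in $X(\sigma^{t_i}(p))$. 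This supplies the extra unit.

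The conclusion then follows by counting: the $k$ values $X(\sigma^{t_1}(p)), \dots, X(\sigma^{t_k}(p))$ are strictly monotone in the corresponding $S_{t_i}(\mathbf{a})$-values, hence pairwise distinct; each lies in $\{1,\dots,k\}$ (at least $1$ since the starting vertex is always a special vertex at height $0$, at most $k$ since there are only $k$ special vertices in total), so they must fill out $\{1,2,\dots,k\}$. I expect the main difficulty to be purely one of bookkeeping---keeping the cyclic indices straight and verifying that the linear identity from the proof of Theorem \ref{t2} extends cleanly to cyclic indexing, so that the convention of excluding the last vertex from the set of special vertices interacts correctly with $\sigma$; all the conceptual content is already in place.
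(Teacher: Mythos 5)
Your argument is correct. The one imprecision is the phrase ``the height of each special vertex is strictly smaller in $\sigma^{t_j}(p)$'': what the identity gives is that the \emph{$S$-value} is strictly smaller, and since heights are integers while the $S$-values differ by a non-integer correction, the heights of corresponding vertices need not satisfy a strict inequality (they can be equal). But this does not affect your conclusion, because the on-or-below criterion is exactly $S\le 0$, and $S_m(\sigma^{t_j}(\mathbf a)) < S_{m+t_j-t_i}(\sigma^{t_i}(\mathbf a)) \le 0$ is all you need; your strictness step via the vertex at position $t_j$ is also sound. Where you differ from the paper is in architecture: the paper does not rerun the monotonicity argument at all. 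It contracts the sequence at the special positions, setting $b_l = a_{t_{l-1}+1}+\cdots+a_{t_l}$, observes that the $b_l$ are integers summing to $1$, and invokes Theorem \ref{t2} as a black box applied to $\mathbf b$; the partial sums of a conjugate of $\mathbf b$ are precisely the heights of the corresponding special-vertex conjugate of $p$ at its special vertices, so the count of nonpositive partial sums of $\mathbf b$ is the count of special vertices on or below the axis. That reduction is shorter, but it leaves the dictionary between conjugates of $\mathbf b$ and special-vertex conjugates of $p$ entirely implicit; your version is longer and repeats the $S$-function bookkeeping, but it is self-contained, makes the correspondence explicit, and avoids having to check that the contraction interacts correctly with the cyclic indexing. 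Both routes are legitimate proofs of the statement.
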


\begin{proof}
	Given a sequence ${\bf a}$ as in Theorem 2, let the sequence
	${\bf b} = (b_1, b_2, \dots, b_k)$ be defined by
\begin{align}
	b_1 &= a_1 +\dots + a_{t_1} \notag \\ \notag
         b_2 &= a_{t_1+1} +\dots + a_{t_2}\\ \notag
                 & \vdots\\ 
         b_m &= a_{t_{m-1}+1}+\dots+a_{t_m}
\end{align}
	where $t_1<t_2<\dots<t_m=n.$ Since $b_i \in {\mathbb Z}$ and
	$\sum_{i=1}^{m} b_i = 1$ by Theorem \ref{t2} we have that for each
	$k$, $1 \le k \le m$, there is exactly one conjugate of ${\bf
	b}$ with exactly $k$ nonpositive partial sums.
\end{proof}

\clearpage
\section{The three versions of the Catalan number formula}\label{sec:third}
		We can use the notion of special vertices and Theorem \ref{spver} to
give a nice combinatorial interpretation to the three versions of the Catalan
number formula as follows:

\begin{theorem} \label{t5} \  
\begin{enumerate}
\item The number of paths in $\mathcal{P}(n, 1, 1)$ that start with an up step with 
	exactly $k$ up steps starting on or below the $x$-axis for $k = 1, 2, \dots, n+1$ is $\tfrac{1}{n+1}\binom{2n}{n}$.

\item The number of paths in $\mathcal{P}(n, 1, 1)$ that start with a down step with 
	exactly $k$ down steps that start on or below the $x$-axis for $k=1, 2 , \dots, n$ 
	is $\tfrac{1}{n} \binom{2n}{n-1}$.

\item The number of paths in $\mathcal{P}(n, 1, 1)$ with exactly $k$ vertices on or 
	below the $x$-axis for $k = 1, 2, \dots, 2n+1$ is $\tfrac{1}{2n+1} \binom{2n+1}{n}$.
\end{enumerate}
\end{theorem}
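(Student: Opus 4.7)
The plan is to derive all three formulas as three instances of Theorem \ref{spver}, choosing different sets of special vertices in each case, and then bundling the cyclic shifts together into equivalence classes.

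First I would observe that a path $p\in\mathcal{P}(n,1,1)$ has $2n+1$ steps, namely $n+1$ up steps and $n$ down steps, so its total length is coprime to any proper divisor of itself in a useful sense: if $\sigma^{j}(p)=p$ for some $0<j<2n+1$, the partial sum after $j$ steps would have to equal $j/(2n+1)$, which is not an integer. Hence all $2n+1$ cyclic shifts of $p$ are distinct, so the cyclic group acts freely on $\mathcal{P}(n,1,1)$. This partitions the $\binom{2n+1}{n}$ paths of the set into exactly $\tfrac{1}{2n+1}\binom{2n+1}{n}=C_n$ orbits of size $2n+1$.

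Next I would apply Theorem \ref{spver} orbit by orbit. Fix one orbit $\mathcal{O}$.

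\begin{itemize}
\item For part (1), take the special vertices of $p$ to be the starting points of its $n+1$ up steps. These are cyclically invariant, and the conjugates of $p$ beginning at a special vertex are exactly the $n+1$ elements of $\mathcal{O}$ whose first step is an up step. By Theorem \ref{spver}, among these $n+1$ paths the statistic ``number of up steps starting on or below the $x$-axis'' takes each value in $\{1,2,\dots,n+1\}$ exactly once. Summing over the $C_n$ orbits gives $C_n=\tfrac{1}{n+1}\binom{2n}{n}$ paths for each $k$.
\item For part (2), take the special vertices to be the starting points of the $n$ down steps. The conjugates of $p$ starting at a special vertex are precisely the $n$ members of $\mathcal{O}$ beginning with a down step, and Theorem \ref{spver} forces their ``number of down steps starting on or below the $x$-axis'' to hit each value in $\{1,\dots,n\}$ exactly once. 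Each $k$ therefore yields $C_n=\tfrac{1}{n}\binom{2n}{n-1}$ paths.
\item For part (3), take every vertex of $p$ except the last to be special; there are $2n+1$ of them, and the $2n+1$ conjugates starting at a special vertex are simply all of $\mathcal{O}$. Theorem \ref{spver} then says that across $\mathcal{O}$ the number of vertices on or below the $x$-axis is a bijection with $\{1,\dots,2n+1\}$, giving $C_n=\tfrac{1}{2n+1}\binom{2n+1}{n}$ paths for each $k$.
\end{itemize}

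The main thing to check carefully is that each choice of ``special vertices'' is genuinely preserved by $\sigma$ (up steps, down steps, and all-but-last vertices all are) and that the conjugates starting at a special vertex are counted correctly; the freeness of the cyclic action makes this bookkeeping clean. I do not expect any computational difficulty; the only subtle point is the freeness observation in the first paragraph, since without it an orbit of size $<2n+1$ would be double-counted when we pass from ``per orbit'' counts to the global total.
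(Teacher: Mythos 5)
Your proof is correct and takes essentially the same route as the paper: all three parts are obtained from Theorem \ref{spver} by choosing up-step starts, down-step starts, and all step starts as the special vertices. Your explicit verification that the cyclic action is free (so each orbit has exactly $2n+1$ distinct conjugates) is a detail the paper leaves implicit in the distinctness of the $S_i({\bf a})$ from Theorem \ref{t2}, and it is a welcome clarification rather than a departure.
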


\begin{proof}
	This is just a straightforward application of Theorem \ref{spver}. 
	First we'll prove the first part. Let $p$ be any path in $\mathcal{P}(n, 1, 1)$. 
	So $p$ starts from $(0, 0)$ and ends at $(2n+1,1)$ with $n+1$ up steps and $n$ down 
	steps. We take the initial 
	vertices of the up steps of $p$ as our special vertices. Since there 
	are $n+1$ up steps, $p$ has $n$ conjugates that start with an up step. 
	By Theorem \ref{spver} there is exactly one conjugate of $p$ with exactly 
	$k$ up steps starting on or below the $x$-axis and we know that the
number of paths 
	in $\mathcal{P}(n, 1, 1)$ that start with an up step is given by the binomial 
	coefficient $\binom{2n}{n}$. Therefore the number of paths starting with 
	an up step and having $k$ up steps on or below the $x$-axis is given by 
	$\tfrac{1}{n+1}\binom{2n}{n}$ as stated in part one. The proof of part two is similar, where 
	we consider the initial vertices of the down steps as special vertices. 
	For part three we consider the initial vertices of all the steps as special 
	vertices and use the same argument. 
\end{proof}

Note that part one of the theorem is basically the classical Chung-Feller
theorem. To make the connection we just need to remove the first up step and
lower the path one level down. Then we get a path in $\mathcal{P}(n, 1, 0)$ that
starts and ends on the $x$-axis with $k$ up steps starting below the $x$-axis.
Since the number of up and down steps below the $x$-axis are the same, having $k$
up steps below the $x$-axis is the same as having $2k$ steps below the $x$-axis.
 
 \clearpage
\section{Words}\label{sec:fourth}
	We can encode each up step by the letter $U$ (for up) and each down 
	step by the letter $D$ (for	down), obtaining the encoding of paths
in 
	$\mathcal{P}(n, 1, 1)$ as {\it words}. For example,
	the path in Fig. \ref{fig:lattice} is encoded by the word \[ UDUDDDUDUUDUUUDUD.\] 
	In a path a {\it peak} is an occurrence of $UD$, a {\it valley} is an occurrence of $DU$,
	a {\it double rise} is an occurrence of $UU$, and a {\it double fall} is an occurrence of $DD$.
\begin{figure}[ht]
	\centering
		\includegraphics[width=0.80\textwidth]{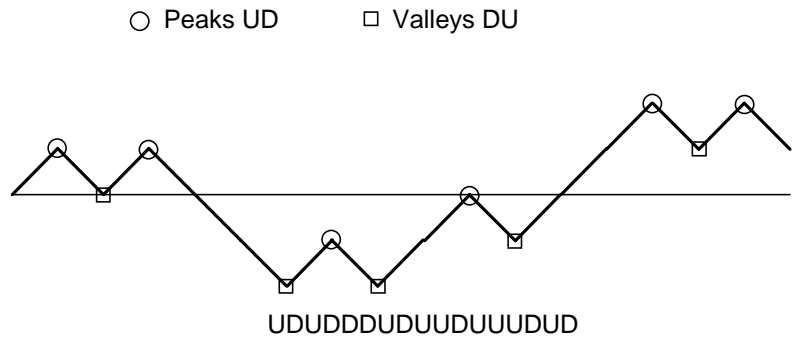}
	\caption{Peaks and valleys}
	\label{fig:lattice}
\end{figure}
	
	By a peak lying on or below the $x$-axis we mean the vertex between the
up step and the down step lying on or below the
	 $x$-axis and for a double rise we consider the vertex between the two
consecutive up steps lying on or below the $x$-axis. Similarly for valleys and double falls.
In the next section we'll count paths according to the number of these special vertices lying on or below the $x$-axis.
\clearpage
\section{Versions of the Narayana number formula}\label{sec:fifth}
\begin{defn}
  The Narayana number $N(n,k)$ \cite{narayana} counts Dyck paths from
    	$(0,0)$ to $(2n,0)$ with $k$ peaks and is given by
\begin{equation*}
		N(n,k) = \dfrac{1}{n} \binom{n}{k} \binom{n}{k-1}
\end{equation*} for $n \ge 1.$
    	$N(n,k) $ can also be expressed in five other forms as
\begin{eqnarray*}
N(n,k) &=&  \dfrac{1}{k} \binom{n}{k-1} \binom{n-1}{k-1}
		  = \dfrac{1}{n-k+1} \binom{n}{k} \binom{n-1}{k-1}\\
		   &=& \dfrac{1}{n+1} \binom{n+1}{k} \binom{n-1}{k-1}
   	       = \dfrac{1}{k-1} \binom{n}{k} \binom{n-1}{k-2}\\
		&=& \dfrac{1}{n-k} \binom{n}{k-1} \binom{n-1}{k}.
\end{eqnarray*}
\end{defn}

	These numbers are well known in the literature since they have many 
	combinatorial interpretations (see for example Sulanke \cite{cc}, which 
	describes many properties of Dyck paths having the Narayana distribution). Deutsch \cite{aa} studied the enumeration of Dyck paths according to various parameters, several of which involved Narayana numbers.
	
	The generalized Chung-Feller theorem can also be used to give combinatorial 
	interpretation of the different versions of the Narayana number formula 
	taking the special vertices as peaks, valleys, double rises, and double falls.
	
\begin{theorem}\label{t6} \ 
\begin{enumerate}
\item  The number of paths in  $\mathcal{P}(n, 1, 1)$ with $k-1$ peaks that start with a down step and end with an up step with exactly $j$ peaks on or 
	below the $x$-axis for $j = 0, 1, 2, \dots, k-1$ is given by $\tfrac{1}{k} \binom{n}{k-1} \binom{n-1}{k-1}$.

\item The number of paths in  $\mathcal{P}(n, 1, 1)$ with $k-1$ valleys
	that start with an up step and end with a down step with exactly 
	$j$ valleys on or below the $x$-axis for $j=0, 1, 2, \dots, k-1$  is given by 
    	$\tfrac{1}{k} \binom{n}{k-1} \binom{n-1}{k-1} $.

\item The number of paths in  $\mathcal{P}(n, 1, 1)$ with 
	$n-k$ double rises that start with an up step and end with an up step
with exactly $j$ double 
	rises on or below the $x$-axis for $j=0, 1, 2, \dots, n-k$ is given by 
	$\tfrac{1}{n-k+1} \binom{n}{k}  \binom{n-1}{k-1}$.

\item The number of paths in  $\mathcal{P}(n, 1, 1)$ with 
 	$n-k-1$ double falls that start with a down step and end with a down
step with exactly 
 	$j$ double falls on or below the $x$-axis for $j=0, 1, 2, \dots, n-k-1$ is given by 
 	$\tfrac{1}{n-k} \binom{n}{k-1}  \binom{n-1}{k}$.

\item The number of paths in  $\mathcal{P}(n, 1, 1)$ with $k$ peaks that start with 
	an up step with exactly $j$ up steps starting on or below the $x$-axis for $j= 1, 2, \dots, n+1$ 
	is given by $\tfrac{1}{n+1} \binom{n+1}{k}  \binom{n-1}{k-1}$.

\item The number of paths in  $\mathcal{P}(n, 1, 1)$ with $k$ valleys that start 
	with a down step with exactly $j$ down steps starting on or below the $x$-axis for 
	$j= 1, 2, \dots, n$ is given by $\tfrac{1}{n} \binom{n}{k}  \binom{n}{k-1}$.
\end{enumerate}
\end{theorem}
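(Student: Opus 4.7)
Each of the six parts is an application of Theorem \ref{spver} with a suitable choice of special vertices. For parts one through four the special vertices are, respectively, the peak vertices (between a $U$ and a $D$), valley vertices (between $D$ and $U$), double-rise vertices (between two $U$'s), and double-fall vertices (between two $D$'s). When we form a conjugate by breaking a cyclic word at such a special vertex, the starting and ending letters of the resulting linear path are forced: breaking at a peak gives a path starting with $D$ and ending with $U$; breaking at a valley gives one starting with $U$ and ending with $D$; breaking at a double rise (resp.\ double fall) gives one starting and ending with $U$ (resp.\ $D$). In each case, the cut severs the pattern at the seam, so the linear count of that pattern equals the cyclic count minus one. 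Moreover, the starting vertex $(0,0)$ is itself a special vertex lying on the $x$-axis, yet it is not a linear peak/valley/etc.\ because the corresponding two-letter pattern has been broken. Hence the quantity $X$ of Theorem \ref{spver} equals one plus the number of linear patterns on or below the $x$-axis; as $X$ takes each value in $\{1,\ldots,c\}$ exactly once across the $c$ conjugates starting at a special vertex, the linear count takes each value in $\{0,1,\ldots,c-1\}$ exactly once.

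To complete parts one through four, I enumerate the total number of linear paths with the given starting letter, ending letter, and pattern count, then divide by the number $c$ of special vertices per cyclic class. Fixing the starting and ending letters determines the alternating block structure of $U$- and $D$-runs, and a path is then specified by a composition of $n+1$ into $r$ positive parts (the $U$-run lengths) and a composition of $n$ into $s$ positive parts (the $D$-run lengths), where $r$ and $s$ are forced by the pattern count. For instance, in part one (starts $D$, ends $U$, $k-1$ linear peaks), one finds $r = s = k$, giving $\binom{n}{k-1}\binom{n-1}{k-1}$ linear paths, with $c = k$ cyclic peaks per class; the quotient is $\tfrac{1}{k}\binom{n}{k-1}\binom{n-1}{k-1}$ as claimed. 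Parts two through four differ only in the run counts: part two gives $r = s = k$ with $c = k$; part three gives $r = k+1$, $s = k$ with $c = n+1-k$; part four gives $r = k$, $s = k+1$ with $c = n-k$. Each case yields the advertised formula.

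Parts five and six instead use the initial vertices of up steps (resp.\ down steps) as special vertices, exactly as in Theorem \ref{t5}. The essential observation is that a conjugate starting with $U$ cannot have its seam at a cyclic peak, since a cyclic peak requires the letter immediately after the seam to be $D$. Therefore the number of linear peaks of such a conjugate equals the number of cyclic peaks of $w$. Consequently, restricting to linear paths with $k$ peaks precisely selects cyclic classes with $k$ cyclic peaks, and each such class contributes all $n+1$ of its $U$-starting conjugates, with $X$ ranging uniformly over $\{1,\ldots,n+1\}$. A case analysis on the ending letter shows that the total number of paths starting with $U$ with $k$ peaks equals
\[
\binom{n}{k-1}\binom{n-1}{k-1} + \binom{n}{k}\binom{n-1}{k-1} = \binom{n+1}{k}\binom{n-1}{k-1}
\]
by Pascal's identity; dividing by $n+1$ gives the stated formula. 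Part six is symmetric, using the identity $\binom{n-1}{k-1}+\binom{n-1}{k}=\binom{n}{k}$.

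The main obstacle is the bookkeeping required to track how cyclic and linear counts of each pattern differ at the seam, and to verify that the composition counts for each starting/ending combination multiply to the expected Narayana expression in each of the six cases. Once these details are in place, Theorem \ref{spver} delivers the equidistribution immediately.
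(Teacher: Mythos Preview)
Your proof is correct and follows essentially the same strategy as the paper: apply Theorem~\ref{spver} with peaks, valleys, double rises, double falls (parts 1--4) or up/down step initial vertices (parts 5--6) as special vertices, then count the relevant paths via compositions of $n+1$ and $n$ into the appropriate number of positive parts. The only cosmetic differences are that you spell out the off-by-one between $X$ and the linear pattern count more carefully than the paper does, and in parts 5--6 you split into cases on the final letter and invoke Pascal's identity, whereas the paper handles both cases at once with a single parametrization allowing the last $U$-block to be empty.
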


\begin{proof} \ 
\begin{enumerate}[fullwidth, itemsep=1em]

\item Consider paths that start with a down step $D$ and
	end with an up step $U$ with $k-1$ peaks $UD$. Each one will have $k$ conjugates of
	this form because the starting point will become a peak when we take a conjugate. So taking peaks 
	as special vertices we see by Theorem 4 that the number of peaks on or below the $x$-axis is 
	equidistributed.

	We can write such a path as $D^{j_0} U^{i_1} D^{j_1} \cdots U^{i_{k-1}} D^{j_{k-1}}
	U^{i_k}$ where \[i_1 + i_2 + \cdots + i_k = n+1; \hspace{.5cm} i_l > 0 \] and 
	\[j_0 + j_1 + \cdots + j_{k-1} = n; \hspace{.5cm}  j_l > 0\]
	for $l = 0, \dots, k$. The number of solutions of these equations is $\binom{n-1}{k-1} \binom{n}{k-1}$. 
	Since each path has $k$ conjugates of this form, the number of paths
	with $j$ peaks on or below the $x$-axis is given by $\tfrac{1}{k} \binom{n-1}{k-1}
	\binom{n}{k-1}$.

\item Since the peaks and the valleys are interchangable, by replacing the up steps with down steps the proof of the the second part is exactly the same as the first part.


\item Consider paths that start with an up step $U$ and
	end with an up step $U$. We know that the number of peaks plus the number of double rises is
	equal to $n$. So if we consider paths with $k$ $UD$s then each
	path will have $n-k$ double rises. So there will be $n-k+1$ conjugates that start and end with an up step.  We can write such a path as $U^{i_1} D^{j_1} \cdots U^{i_k} D^{j_k} U^{i_{k+1}}$ where
	\[i_1 + i_2 +\cdots + i_{k+1} = n+1 ; \hspace{.5cm} i_l > 0, \textnormal{ for } l=1, \dots, k+1\] 
	and \[j_1 + j_2 +\cdots + j_k = n; \hspace{.5cm} j_m > 0, \textnormal{ for } m=1,  \dots, k.\]
	The number of solutions of these equations is $\binom{n}{k} \binom{n-1}{k-1}$.
	Since there are $n-k+1$ conjugates of this form, the number of paths with $j$ double rises on or below the $x$-axis is given by $\tfrac{1}{n-k+1} \binom{n}{k}
	\binom{n-1}{k-1}$.

\item Consider paths that start with a down step $D$ and
	end with a down step $D$. We know that the number of valleys plus the number of double falls is
	equal to $n-1$. So if we consider paths with $k$ $DU$s then each
	path will have $n-k-1$ double falls. So we can
	write such a path as $D^{i_1} U^{j_1} \cdots
 	U^{j_k} D^{i_{k+1}}$ where
	\[i_1 + i_2 +\cdots + i_{k+1} = n ; \hspace{.5cm} i_l > 0 \textnormal{ for } l=1, 2, \dots, k+1\] and 
	\[j_1 + j_2 +\cdots + j_k = n+1; \hspace{.5cm} j_m> 0 \textnormal{ for } m=1, \dots, k.\]
	The number of solutions of these equations is $\binom{n-1}{k} \binom{n}{k-1}$.
	Since there are $n-k$ conjugates of this form, the number of paths with $j$ double falls on or below the $x$-axis is given by $\tfrac{1}{n-k} \binom{n-1}{k} \binom{n}{k-1}$.

\item If we consider paths that start with an up step $U$ with $k$ peaks $UD$ and we do not care how 
	they end then we get $n+1$ conjugates of 
	this form. We can write such a path as $U^{i_1} D^{j_1} U^{i_2} \cdots U^{i_k}
	D^{j_k} U^{i_{k+1}-1}$ where \[i_1 + i_2 + \cdots + i_{k+1} - 1 = n+1; \hspace{.5cm} i_l > 0 \] for $l = 1, \dots, k+1$ and 
	\[j_1 + j_2 + \cdots + j_k = n; \hspace{.5cm}  j_l > 0\]
	for $l = 1, \dots, k$. The number of solutions of these equations is $\binom{n+1}{k} \binom{n-1}{k-1}$. 
	Since each path has $n+1$ conjugates of this form, the number of paths
	with $j$ up steps on or below the $x$-axis is given by $\tfrac{1}{n+1} \binom{n+1}{k} \binom{n-1}{k-1}$.

\item If we consider paths that start with a down step $D$ with $k$ valleys $DU$, each one will have $n$
	 conjugates of this form. We can write such a path as $D^{i_1} U^{j_1} \cdots D^{i_k}
	U^{j_k}  D^{i_{k+1} - 1}$ where \[i_1 + i_2 + \cdots + i_{k+1} - 1 = n; \hspace{.5cm} i_l > 0 \]
	for $l = 1, \dots, k$ and 
	\[j_1 + j_2 + \cdots + j_k = n+1; \hspace{.5cm}  j_l > 0\]
	for $l = 1, \dots, k+1$. The number of solutions of these equations is $\binom{n}{k} \binom{n}{k-1}$. 
	Since each path has $n$ conjugates of this form, the number of paths
	with $j$ down steps on or below the $x$-axis is given by $\tfrac{1}{n} \binom{n}{k} \binom{n}{k-1}$. \qedhere
\end{enumerate}
\end{proof}

	Notice that from part five of Theorem \ref{t6} we can find an
analogue of the classical Chung-Feller theorem for Narayana numbers in terms of {\it decending runs}. A decending run in a path is a maximal consecutive sequence of down steps. For example, $U\underline{D}U\underline{DD}UUU\underline{DD}$ has $3$ decending runs. If we remove the first up step of the paths as described in part five and shift the paths down one level, we get paths in $\mathcal{P}(n,1,0)$ that start and end on the $x$-axis. If these paths start with an up step they will have $k$ peaks or $k$ decending runs. If they start with a down step then they will have $k-1$ peaks but $k$ decending runs. Therefore the equivalent Narayana-Chung-Feller theorem is 
\begin{theorem}[Narayana-Chung-Feller Theorem]
Among the paths in $\mathcal{P}(n,1,0)$  with $k$ decending runs, the number of paths with $i$ up steps below the $x$-axis is independent of $i$ for $i=0, \dots, n$, and is the Narayana number $\tfrac{1}{n+1} \binom{n+1}{k}  \binom{n-1}{k-1}$.
\end{theorem}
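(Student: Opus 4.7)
The plan is to deduce the theorem directly from part (5) of Theorem \ref{t6} by the trimming/shifting bijection hinted at in the paragraph preceding the statement. I would fix $k$ and consider the subset of $\mathcal{P}(n,1,1)$ consisting of paths that start with an up step and have exactly $k$ peaks. Part (5) of Theorem \ref{t6} asserts that, uniformly in $j \in \{1,2,\dots,n+1\}$, the number of such paths having exactly $j$ up steps that begin on or below the $x$-axis equals $\tfrac{1}{n+1}\binom{n+1}{k}\binom{n-1}{k-1}$.

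Next I would introduce the map $\Phi$ that deletes the initial up step and then shifts the remaining path down by one unit. The image is a lattice path in $\mathcal{P}(n,1,0)$ that starts and ends on the $x$-axis, and $\Phi$ is an easily-inverted bijection (the inverse prepends an up step from $(0,-1)$ to $(1,0)$ and shifts up by one). It remains to match the two statistics through $\Phi$: the descending-run count on the image should equal $k$, and the number of up steps beginning strictly below the $x$-axis should be exactly $j-1$.

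The main step, which I expect to be the only part requiring care, is the descending-run bookkeeping, and it splits into two cases according to the second letter of the original word. If the original path begins $UU\cdots$, then removing the first $U$ destroys no peak, so the image still has $k$ peaks; since it starts with $U$ and returns to the axis, its peaks biject with its descending runs, giving $k$ descending runs. If instead the original begins $UD\cdots$, then deleting the leading $U$ destroys exactly one peak, leaving $k-1$ peaks, but the image now starts with $D$, so the descending-run count exceeds the peak count by one and is again $k$. For the up-step statistic, an up step in the image begins at height $\leq -1$ iff the corresponding step in the original begins at height $\leq 0$; since the deleted first up step itself starts at height $0$, the count drops by exactly one, so the image has $i = j-1$ up steps below the $x$-axis. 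As $j$ ranges over $1,\dots,n+1$, the value $i$ ranges over $0,\dots,n$, and the count transported by $\Phi$ yields the theorem.
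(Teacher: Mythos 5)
Your proposal is correct and follows essentially the same route as the paper: the paper also derives the theorem from part (5) of Theorem \ref{t6} by deleting the initial up step, shifting down one level, and observing that the image has $k$ descending runs in both cases (image starting with $U$: $k$ peaks $=$ $k$ descending runs; image starting with $D$: $k-1$ peaks but $k$ descending runs), with the up-step statistic dropping from $j$ to $i=j-1$. Your write-up just makes the bookkeeping slightly more explicit than the paper's informal paragraph.
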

The Narayana number formula  $\tfrac{1}{k-1} \binom{n}{k} \binom{n-1}{k-2}$
did not fit into this picture. But we have a nice combinatorial
interpretation for this form in section \ref{sec:tenth}. 
\clearpage
\section{Circular peaks}\label{sec:sixth}
	We will introduce here the notion of circular peaks to give yet another application of the  generalized Chung-Feller theorem. In addition to the six forms of the Narayana number formula presented in the previous section there is another form given by
\begin{equation}
N(n,k) = \dfrac{1}{2n+1} \left ( \binom{n}{k-1} \binom{n}{k} + \binom{n+1}{k} \binom{n-1}{k-1}  \right ).
\end{equation}
We'll present a theorem in this section that will give a combinatorial interpretation of this form of the Narayana number formula.
\begin{defn}
 	For any path $p \in \mathcal{P}(n, 1 ,1)$ we call every peak a {\it circular peak}. If $p$ starts with a down step 
	and ends with an up step then the initial vertex will also be considered as a circular peak.
\end{defn}
	Note that circular peaks are preserved under arbitrary conjugation. 

\begin{theorem}
	The number of paths in $\mathcal{P}(n,1,1)$ with $k$ circular peaks having $j$ 
	vertices on or below the $x$-axis is independent of $j$ for $j=1, \dots, 2n+1$. The number of such paths is 
	given by the Narayana number $N(n,k) = \tfrac{1}{2n+1} \left ( \binom{n}{k-1} \binom{n}{k} + \binom{n+1}{k} \binom{n-1}{k-1}  \right )$.
\end{theorem}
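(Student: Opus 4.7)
The plan is to apply Theorem \ref{spver} (or, equivalently, Theorem \ref{t3}) by taking \emph{every} cycle-position as a special vertex, and separately to compute, by a case split, the total number of paths in $\mathcal{P}(n,1,1)$ with a prescribed number of circular peaks. Whether a vertex is a circular peak depends only on its two incident edges once the start and end of a path are identified as a single cyclic vertex, so circular peaks are preserved by the cyclic shift $\sigma$; each orbit therefore has a well-defined number of circular peaks. Moreover the argument in the proof of Theorem \ref{t2} shows that the shifted partial sums $S_0({\bf a}),\dots,S_{2n}({\bf a})$ are all distinct, so every orbit has exactly $2n+1$ elements, and Theorem \ref{t3} tells us that the $2n+1$ conjugates of any $p \in \mathcal{P}(n,1,1)$ realize the values $1,2,\dots,2n+1$ for the number of vertices on or below the $x$-axis, each value once.

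Combining these two facts, the number of paths with $k$ circular peaks and $j$ vertices on or below the $x$-axis equals $A_k/(2n+1)$ for every $j\in\{1,\dots,2n+1\}$, where $A_k$ is the total number of paths in $\mathcal{P}(n,1,1)$ with $k$ circular peaks. It therefore suffices to verify
\[
A_k \;=\; (2n+1)\,N(n,k) \;=\; \binom{n}{k-1}\binom{n}{k}+\binom{n+1}{k}\binom{n-1}{k-1}.
\]

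To compute $A_k$ I would split paths according to their first and last letters. In the cases $(U,D)$, $(U,U)$, $(D,D)$ the circular peaks coincide with the ordinary peaks, so I count paths with exactly $k$ peaks; in the case $(D,U)$ the cyclic join produces one extra peak at the origin, so I count paths with exactly $k-1$ ordinary peaks. Encoding each case as an alternating run-length composition, exactly as in the proof of Theorem \ref{t6}, yields
\begin{align*}
\#(U,D) &= \binom{n}{k-1}\binom{n-1}{k-1}, & \#(U,U) &= \binom{n}{k}\binom{n-1}{k-1},\\
\#(D,D) &= \binom{n-1}{k}\binom{n}{k-1}, & \#(D,U) &= \binom{n-1}{k-1}\binom{n}{k-1}.
\end{align*}
Adding these four and simplifying by Pascal's rule $\binom{n-1}{k-1}+\binom{n-1}{k}=\binom{n}{k}$ and then $\binom{n}{k-1}+\binom{n}{k}=\binom{n+1}{k}$ should collapse the sum to the claimed expression.

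The main obstacle, and really the only delicate point, is the case split defining $A_k$: one must recognize that only the $(D,U)$ endpoint pattern contributes the wrap-around circular peak, and translate each of the four endpoint patterns into the correct composition enumeration. The concluding binomial identity is then a routine two-step application of Pascal's rule, and the equidistribution statement falls out of the cycle argument with no further work.
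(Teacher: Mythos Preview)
Your proposal is correct and follows essentially the same approach as the paper. The only difference is cosmetic: the paper merges your four endpoint cases into two (split by first step only) by writing the run-length word with a final exponent $i_{k+1}-1\ge 0$, which absorbs the Pascal identities you apply at the end directly into the composition count; the cycle-lemma part of the argument is identical.
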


\begin{proof}
	We consider paths with $n+1$ up steps and $n$ down steps with $k$ circular peaks. To find the total number 
	of paths we need to consider two cases.

{\it Case 1:} Paths starting with a down step. This kind of path has 
	$k-1$ peaks if the path ends with an up step and $k$ peaks if it ends with a down step. The path can be represented 
	by  $D^{i_1}U^{j_1} D^{i_2} U^{j_2}\dots D^{i_k}U^{j_k}D^{i_{k+1}-1}$ where
	\[i_1 + i_2  +\cdots + i_k + i_{k+1} -1 = n ; \hspace{.5cm} i_l > 0\] and 
	\[j_1 + j_2 + \cdots + j_k = n+1; \hspace{.5cm} j_l > 0\]
	for each $l = 1, 2, \dots, k$. The number of solution is $\binom{n}{k} \binom{n}{k-1}$.

{\it Case 2:} Paths starting with an up step. This kind of path has $k$ peaks. The path 
	can be represented by  $U^{i_1} D^{j_1} U^{i_2}\dots U^{i_k} D^{j_k}U^{i_{k+1}-1}$ where
	\[i_1 + i_2  +\cdots + i_{k+1} = n+2 ; \hspace{.5cm} i_l > 0 \textnormal{ for each } l=1, 2, \dots, k+1\]
	 and	\[j_1 + j_2 +\cdots + j_k = n; \hspace{.5cm} j_m > 0 \textnormal{ for each } m=1, 2, \dots, k\]
	The number of solution is $\binom{n+1}{k} \binom{n-1}{k-1}$.
	

	Adding the two we get
\begin{equation}
\label{t7}
\begin{aligned} 
   	 \binom{n}{k-1} \binom{n}{k} + \binom{n+1}{k} \binom{n-1}{k-1} 
   	& = (2n+1) \dfrac{1}{n+1} \binom{n+1}{k} \binom{n-1}{k-1} \\
   	& = (2n+1) N(n,k).
\end{aligned}
\end{equation}

	We know that circular peaks are preserved under conjugation and there are $2n + 1$ conjugates of these 
	paths. So using Theorem \ref{t3} dividing \eqref{t7} by $2n + 1$ we see that the number of paths with $j$ 
	vertices on or below the $x$-axis is given by the Narayana number $ \tfrac{1}{2n+1} \left ( \binom{n}{k-1} \binom{n}{k} + \binom{n+1}{k} \binom{n-1}{k-1}  \right )$.
\end{proof}	

\chapter{Other number formulas}
\section{Motzkin, Schr\"oder, and Riordan number formulas}
	In this section we'll consider paths having different types of steps, in particular Motzkin and Schr\"oder paths.
	We'll see that the generalized Chung-Feller theorem can also be applied to the Motzkin and Schr\"oder number formulas.
	
\begin{defn}
	Let us define 
	$\mathcal{Q}(k, l, r, s, h)$ to be the set of paths having the step set $M = \{(1,1), (s, 0), (1, -r)\}$ 
	that lie in the half plane $\mathbb{Z}_{\ge 0}\times \mathbb{Z}$ ending at $((r+1)k + s l + h, h)$ with $r k+h$ up steps, $k$ down steps, and $l$ flat steps.
	The paths in $\mathcal{Q}(k, l, 1, 1, 0)$ that lie in the quarter plane $\mathbb{Z}_{\ge 0}\times \mathbb{Z}_{\ge 0}$ are 
	known as {\it Motzkin paths} and the paths in $\mathcal{Q}(k, l, 1, 2, 0)$ that lie in the quarter plane $\mathbb{Z}_{\ge 0}\times \mathbb{Z}_{\ge 0}$ are known as {\it Schr\"oder paths}. In this section we'll only consider $s$ having the value $1$ or $2$. 
\end{defn}

All the paths discussed before including the Motzkin paths have steps of unit length. Therefore the total number of steps of the paths coincided with the length of the path. But from now we'll define the length of the path to be the $x$-coordinate of the endpoint. So paths in $\mathcal{Q}(k, l, r, s, h)$ have length $(r+1)k + s l + h$ and total number of steps $(r+1)k + l + h$. With this definition we can see that the difference between the Schr\"oder paths and the Motzkin paths is due to the length of the horizontal steps. The horizontal steps of the Schr\"oder paths are of length two. So the Schr\"oder paths that start and end on the $x$-axis have even length. 

Let us define	
\begin{equation} \label{ms}
T(k,l) = \dfrac{1}{k+1}\binom{2 k+ l}{2 k} \binom{2 k}{k} = \binom{2k+l}{2k} C_k.
\end{equation}
Then $T(k,l)$ counts paths in $\mathcal{Q}(k, l, 1, s, 0)$ because the number of ways to place the flat steps is $\binom{2 k+ l}{2 k}$ and after placing the flat steps we can place in $C_k$ ways the up and down steps. Replacing $2k+l$ by $n$ or $k+l$ by $n$ in \eqref{ms} we get the following two formulas. 
\begin{align}\label{mtz}
 M(n,k) &= \dfrac{1}{k+1} \binom{n}{2 k} \binom{2k}{k}\\ \label{sch}
 R(n,k) &=   \dfrac{1}{k+1} \binom{n+k}{2 k} \binom{2k}{k}.
\end{align}
Here $ M(n,k)$ counts Motzkin paths in $\mathcal{Q}(k, n-2k, 1, 1, 0)$ with $k$ up steps, $k$ down steps and $n-2k$ flat steps and the Motzkin number \cite{motz} $M_n = \sum_{k=0}^{\lfloor n/2 \rfloor} M(n,k)$ counts Motzkin paths of length $n$.  The first few Motzkin numbers (sequence A001006 in OEIS) are $1, 1, 2, 4, 9,	21, 51, 127, 323, 835, 2188, 5798, \dots$. Also $R(n,k) $ counts Schr\"oder  paths in $\mathcal{Q}(k, n-k, 1, 2, 0)$ with $k$ up steps, $k$ down steps and $n-k$ flat steps and the Schr\"oder number $R_n = \sum_{k=0}^{n} R(n,k)$ counts Schr\"oder paths of semi-length $ n=  k +  l$. The first few Schr\"oder numbers (sequence A006318 in OEIS) are
$1, 2, 6, 22, 90, 394,\dots$. 

There is a simple relation between \eqref{mtz} and \eqref{sch} given by \[M(n+k,k) = R(n,k). \] 
Below is the table of values of $T(k,l)$ for $k,l = 0, \dots, 6$.
\begin{center}
\begin{tabular}{c|rrrrrrr}
\text{\em k} \textbackslash \! \text{\em l} &0&1&2&3&4&5&6 \\ \hline 
0& 1 & 1 & 2 & 5 & 14 & 42 & 132  \\
1& 1 & 3 & 10 & 35 & 126 & 462 & 1716 \\
2& 1 & 6 & 30 & 140 & 630 & 2772 & 12012 \\
3& 1 & 10 & 70 & 420 & 2310 & 12012 & 60060 \\
4& 1 & 15 & 140 & 1050 & 6930 & 42042 & 240240 \\
5& 1 & 21 & 252 & 2310 & 18018 & 126126 & 816816 \\
6& 1 & 28 & 420 & 4620 & 42042 & 336336 & 2450448
\end{tabular}
\end{center}
\ \\
It is interesting to see that we can write $T(k,l)$ in the following seven forms,
\begin{eqnarray*}
T(k,l)  &=&  \dfrac{1}{k+1} \binom{2k+l}{2 k} \binom{2k}{k}
		=\dfrac{1}{k} \binom{2k+l}{2 k} \binom{2k}{k-1}\\
	&=&  \dfrac{1}{k+l+1} \binom{2k+l}{k} \binom{k+l+1}{k+1}	
		=  \dfrac{1}{k+l} \binom{2k+l}{k+1} \binom{k+l}{k}\\
		&=&  \dfrac{1}{2k+1} \binom{2k+l}{2k} \binom{2k+1}{k} 
		=  \dfrac{1}{l} \binom{2k+l}{k} \binom{k+l}{k+1}\\
		&=& \dfrac{1}{2k+l+1} \binom{2k+l+1}{2k+1} \binom{2k+1}{k}.
\end{eqnarray*}
Note that when $l=0$ these formulas reduce to the three forms of the Catalan numbers except for the one with $\tfrac{1}{l}$ in front. Similar to the Catalan and the Narayana number formulas, we will give a combinatorial interpretation of the different formulas for $T(k,l)$ in the following theorem. 
\begin{theorem}\label{t8} \ 
\begin{enumerate}
\item \label{n1} The number of paths in $\mathcal{Q}(k, l, 1, s, 1)$ that start with an up step with 
	exactly $i$ up steps starting on or below the $x$-axis for $i = 1, 2, \dots, k+1$ is $\tfrac{1}{k+1}\binom{2 k+ l}{2k} \binom{2 k}{k}$.

\item The number of paths in $\mathcal{Q}(k, l, 1, s, 1)$ that start with a down step with 
	exactly $i$ down steps starting on or below the $x$-axis for $i = 1, 2, \dots, k$ is $\tfrac{1}{k}\binom{2 k+ l}{2k} \binom{2k}{k-1}$.

\item The number of paths in $\mathcal{Q}(k, l, 1, s, 1)$ that start with a flat step with 
	exactly $i$ flat steps starting on or below the $x$-axis for $i = 1, 2, \dots, l$ is $\tfrac{1}{l}\binom{2 k+ l}{k} \binom{k+l}{k+1}$.
	
\item The number of paths in $\mathcal{Q}(k, l, 1, s, 1)$ that start with an up step or flat step with 
	exactly $i$ up or flat steps starting on or below the $x$-axis for $i = 1, 2, \dots,  k+ l+1$ is $\tfrac{1}{k+ l+1}\binom{2 k+ l}{k} \binom{ k+ l+1}{k+1}$.

\item The number of paths in $\mathcal{Q}(k, l, 1, s, 1)$ that start with a down step or a flat step with 
	exactly $i$ down or flat steps starting on or below the $x$-axis for $i =1, 2, \dots,  k+ l$ is $\tfrac{1}{k+ l}\binom{2 k+ l}{k+1} \binom{ k+ l}{k}$.

\item The number of paths in $\mathcal{Q}(k, l, 1, s, 1)$ that start with an up or a down step with 
	exactly $i$ up or down steps starting on or below the $x$-axis for $i = 1, 2, \dots, 2k+1$ is $\tfrac{1}{2k+1}\binom{2 k+ l}{2k} \binom{2k+1}{k}$.
	
\item The number of paths in $\mathcal{Q}(k, l, 1, s, 1)$ with exactly $i$ vertices on or below the $x$-axis for $i = 1, 2, \dots, 2k+l+1$ is $\tfrac{1}{2 k+ l+1}\binom{2 k+ l+1}{2k+1} \binom{2k+1}{k}$.
\end{enumerate}
\end{theorem}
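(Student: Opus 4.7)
The plan is to imitate the proof of Theorem \ref{t5} (the three Catalan formulas) seven times over, each time specializing the set of special vertices to match the statement. In every case I take as the underlying family all paths in $\mathcal{Q}(k,l,1,s,1)$, which are in bijection with words on the alphabet $\{U,D,F\}$ containing $k+1$ copies of $U$, $k$ copies of $D$, and $l$ copies of $F$, and whose total number is the multinomial coefficient $\binom{2k+l+1}{k+1,\,k,\,l}$. For part~\ref{n1} I declare the initial vertices of the up steps to be special, so a path has $k+1$ special vertices and hence $k+1$ conjugates beginning with an up step; since the number of starting-up-step paths is $\binom{2k+l}{k,k,l}$, Theorem~\ref{spver} immediately distributes these evenly over the $k+1$ possible values of $i$, giving $\tfrac{1}{k+1}\binom{2k+l}{k,k,l}$, which equals $\tfrac{1}{k+1}\binom{2k+l}{2k}\binom{2k}{k}$. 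Parts~2 and~3 follow the same template with ``down'' or ``flat'' replacing ``up'', the relevant counts being $\binom{2k+l}{k+1,k-1,l}$ and $\binom{2k+l}{k+1,k,l-1}$ and the numbers of conjugates being $k$ and $l$ respectively.

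For parts 4--6 I declare \emph{two} kinds of step initials to be special simultaneously (up\,$\cup$\,flat, down\,$\cup$\,flat, or up\,$\cup$\,down). The total number of special vertices is then $k+l+1$, $k+l$, or $2k+1$ respectively, and the total number of paths of $\mathcal{Q}(k,l,1,s,1)$ whose first step is of one of the two specified types is the corresponding sum of two multinomials. After applying Theorem~\ref{spver}, the count per value of $i$ is the total divided by the number of conjugates. The only non-trivial work is checking that these ratios collapse to the advertised closed forms; for instance in part~4,
\begin{equation*}
\binom{2k+l}{k,k,l}+\binom{2k+l}{k+1,k,l-1}
=\frac{(2k+l)!}{k!\,k!\,(l-1)!}\left(\frac{1}{l}+\frac{1}{k+1}\right)
=\frac{(2k+l)!(k+l+1)}{k!\,(k+1)!\,l!},
\end{equation*}
and dividing by $k+l+1$ gives $\tfrac{(2k+l)!}{k!(k+1)!l!}=\tfrac{1}{k+l+1}\binom{2k+l}{k}\binom{k+l+1}{k+1}$. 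The parallel combinations $\tfrac{1}{k}+\tfrac{1}{l}=\tfrac{k+l}{kl}$ in part~5 and $\tfrac{k+1}{k+1}+\tfrac{k}{k+1}=\tfrac{2k+1}{k+1}$ in part~6 perform exactly the cancellations required to produce $\tfrac{k+l}{k+l}$ and $\tfrac{2k+1}{2k+1}$ after division.

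Part~7 is the cleanest: every vertex except the last is special, so there are $2k+l+1$ conjugates and $\binom{2k+l+1}{k+1,k,l}$ paths in total, giving $\tfrac{1}{2k+l+1}\binom{2k+l+1}{k+1,k,l}=\tfrac{1}{2k+l+1}\binom{2k+l+1}{2k+1}\binom{2k+1}{k}$.

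The only possible obstacle is the bookkeeping for the two-step-type cases (parts 4--6), where one must be careful to verify that (i) the special-vertex set really is invariant under $\sigma$ (which is obvious because it is defined purely by step labels), and (ii) the algebraic simplification yields the exact form of $T(k,l)$ quoted in the statement. Everything else is a uniform application of Theorem~\ref{spver}.
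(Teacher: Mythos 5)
Your proposal is correct and follows essentially the same route as the paper: the paper's proof also applies Theorem \ref{spver} with the step-initial vertices of the relevant type(s) taken as special, counts the paths starting with such a step, and divides by the number of conjugates (it carries this out explicitly only for part one and leaves the rest as "similar arguments"). Your explicit multinomial bookkeeping for the two-step-type cases in parts 4--6 checks out and fills in exactly what the paper omits.
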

\begin{proof}
The proof is straightforward using similar arguments to those in the proof of Theorem \ref{t5}. For example the paths in Theorem \ref{t8}(\ref{n1}) that start with an up step have a total of $2k+l+1$ steps. Since the paths start with an up step we can choose $2k$ places from the remaining $2k+l$ places in $\binom{2k+l}{2k}$ ways for the up and down steps and then choose $k$ places from the $2k$ chosen places in $\binom{2k}{k}$ ways to place the down steps. Since there are $k + 1$ conjugates for each path that start with an up step, the number of paths with exactly $i$ up steps on or below the $x$-axis for $i = 1, 2, \dots, k+1$ is $\tfrac{1}{k+1}\binom{2k+l}{2k} \binom{2k+1}{k}$.
\end{proof}

It is also easy to make the connection between these paths and Motzkin and Schr\"oder paths which end at height $0$ rather than height $1$. For example, consider the paths in Theorem \ref{t8}(1) that start with an up step and end at height one keeping track of the up steps starting on or below the $x$-axis. According to the theorem, the number of these paths with $i$ up steps starting below the $x$-axis is independent of $i$. So if we remove the first up step of these paths and shift the paths down one level then we get paths that start and end on the $x$-axis, and have $i$ up steps starting below the $x$-axis. Furthermore if we consider $i=0$ then all the steps must start on or above the $x$-axis and we get exactly the Motzkin or Schr\"oder paths. On the other hand if we take $i$ as large as possible then removing the first up step and shifting the path down one level gives us the negatives of the Motzkin or Schr\"oder paths.

Next we look at similar relations with the Riordan and small Schr\"oder numbers. The number of Motzkin paths of length $n$ with no horizontal steps at level $0$ are called Riordan numbers (sequence A005043 in OEIS) and the number of Schr\"oder paths of length $n$ with no horizontal steps at level $0$ are called small Schr\"oder numbers (sequence A001003 in OEIS). Therefore Riordan and small Schr\"oder paths are Motzkin and Schr\"oder paths respectively without any flat steps on the $x$-axis. It can be shown that 
\begin{equation} \label{rs}
Z(k,l) = \dfrac{1}{k}\binom{2 k+ l}{ k-1} \binom{k+l-1}{k-1}
\end{equation}
counts paths in $\mathcal{Q}(k, l, 1, s, 0)$ with no flat step on the $x$-axis. 

Replacing $2k+l$ by $n$ or $k+l$ by $n$ in \eqref{rs} we get the following two formulas. 
\begin{eqnarray}\label{rio}
 J(n,k)&=& \dfrac{1}{k} \binom{n-k-1}{k-1} \binom{n}{k-1}\\ \label{lsch}
 S(n,k) &=&   \dfrac{1}{k} \binom{n-1}{k-1} \binom{n+k}{k-1}.
\end{eqnarray}
Here $ J(n,k)$ counts Riordan paths in $\mathcal{Q}(k, n-2k, 1, 1, 0)$ with $k$ up steps, $k$ down steps and $n-2k$ flat steps and the Riordan number $J_n = \sum_{k=0}^{\lfloor n/2 \rfloor} J(n,k)$ counts Riordan paths of length $n$.  The first few Riordan numbers $0, 1, 1, 3, 6, 15,	36, 91, \dots$. On the other hand $S(n,k)$ counts small Schr\"oder  paths in $\mathcal{Q}(k, n-k, 1, 2, 0)$ with $k$ up steps, $k$ down steps, and $n-k$ flat steps and the small Schr\"oder number $S_n = \sum_{k=0}^{n} S(n,k)$ counts small Schr\"oder paths of semi-length $ n=  k +  l$. The first few small Schr\"oder numbers are $1, 1, 3, 11, 45, 197, \dots$. 

The relation between \eqref{rio} and \eqref{lsch} is similar to the relation between the Motzkin and large Schr\"oder number formulas, \[J(n+k,k) = S(n,k). \] 
The following table illustrates $Z(k,l)$ for values of $l$ and $k$ from $1$ to $6$.

\begin{center}
\begin{tabular}{c|rrrrrrr}
\text{\em k} \textbackslash \! \text{\em l} &1&2&3&4&5&6 \\ \hline 
 1&1 & 2 & 5 & 14 & 42 & 132 \\
 2&1 & 5 & 21 & 84 & 330 & 1287 \\
 3&1 & 9 & 56 & 300 & 1485 & 7007 \\
 4&1 & 14 & 120 & 825 & 5005 & 28028 \\
 5&1 & 20 & 225 & 1925 & 14014 & 91728 \\
 6&1 & 27 & 385 & 4004 & 34398 & 259896 \\
\end{tabular}
\end{center}

There are several forms of $Z(k,l)$ as well. More precisely five, as follows 
\begin{eqnarray*}
Z(k,l)  &=&  \dfrac{1}{k+l} \binom{k+l}{k} \binom{2k+l}{k-1}
		=\dfrac{1}{k} \binom{k+l-1}{k-1} \binom{2k+l}{k-1}\\
	&=&  \dfrac{1}{k+l+1} \binom{2k+l}{k} \binom{k+l-1}{k-1}	
		=  \dfrac{1}{l} \binom{2k+l}{k-1} \binom{k+l-1}{k}\\
		&=& \dfrac{1}{2k+l+1} \binom{2k+l+1}{k} \binom{k+l-1}{k-1}.
\end{eqnarray*}

Although these forms suggest that there may exist a nice combinatorial interpretation like Theorem \ref{t8}, we do not have one so far. 

The relation between these formulas can be viewed nicely using the following diagram which also shows the relation between Motzkin and Riordan number formulas and large and small Schr\"oder number formulas.
\begin{diagram}
 &     &        &T(k,l)  &           &  \\
 &     &\ldLine^{n=2k+l} &         & \rdLine^{n=k+l}   &\\
 &M_n=\sum_k M(n,k)&&\rLine^{M(n+k,k)=R(n,k)}  &           &R_n=\sum_k R(n,k)\\
  & &        &        &           & \\
 &\dLine~{M_n=J_n+J_{n+1}} &        &        &           &\dLine~{R_n=2 S_n}  \\
 & &        &        &           & \\
 &J_n=\sum_k J(n,k)&&\rLine^{J(n+k,k)=S(n,k)}  &           &S_n=\sum_k S(n,k)\\ 
   &   &\rdLine_{n=2k+l} &         & \ldLine_{n=k+l}  & \\
 &     &        &Z(k,l).
\end{diagram}
Moreover there is a simple relation between \eqref{ms} and \eqref{rs} given by 
\begin{equation} \notag
T(k,l) = Z(k+1,l) +Z(k,l+1).
\end{equation}
\clearpage
\section{A combinatorial proof of the relation between large and small Schr\"oder numbers and between Motzkin and Riordan numbers}\label{sec:seventh}

It is well known \cite{dd} that 
\begin{equation} \label{lssch}
R_n = 2 S_n
\end{equation}
for $n \ge 1$. Shapiro and Sulanke \cite{sch1}, Sulanke \cite{sch2} and Deutsch \cite{sch3} have given  bijective proofs of \eqref{lssch}. In \cite{sch3} Deutsch uses the notion of short bush and tall bush (rooted short bush) with $n+1$ leaves to show his bijection. 

The small Schr\"oder number $S_n$ for $n \ge 1$ is the number of Schr\"oder paths with no flat steps on the $x$-axis. Marcelo Aguiar and Walter Moreira \cite{ma} noted that the Schr\"oder paths counted by the large Schr\"oder numbers $R_n$ fall in two classes, those with flat steps on the $x$-axis, and those without and the number of paths in each class is the small Schr\"oder number $S_n$.

This is quite easy to see. Consider a Schr\"oder path with at least one flat step on the $x$-axis. Now we remove the last flat step that lies on the $x$-axis and elevate the path before the flat step by adding an up step at the begining and a down step at the end. The resulting path will have no flat step on the $x$-axis. To go back consider a nonempty Schr\"oder path with no flat step on the $x$-axis. This kind of path must start with an up step. So we look at the part of the path that returns to the $x$-axis for the first time. We remove the up and down step from the two ends of this part and replace them with a flat step after this part. The resulting path is a Schr\"oder path with at least one flat step on the $x$-axis. 

From Theorem \ref{t8}(4) we find another combinatorial proof of \eqref{lssch}. 
Consider the paths described in Theorem \ref{t8}(4) that start with a flat or a down step and end at height one of length $2 n +1 (= 2k +2l +1) $. Among these paths consider those with all the flat or down steps on or below the $x$-axis. These are counted by the large Schr\"oder numbers. The following figure illustrates a path of this form of length $25$.
\begin{figure}[h!]
\begin{center}
\includegraphics[width=0.80\textwidth]{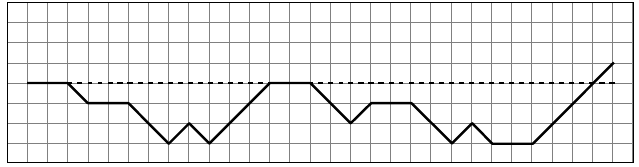} 
	\caption{A path in $\mathcal{Q}(9, 5, 1, 2, 1)$ with all flat or down steps on or below the $x$-axis.}
	\end{center}
\end{figure}
Removing the last up step of these paths gives us the negative Schr\"oder paths. According to the theorem these are equinumerous with those with exactly one flat or down step on or below the $x$-axis. But these fall into two classes, those starting with a flat step and those starting with a down step.

Let $p$ be a path of this form. If $p$ starts with a flat step then it cannot have any other flat step on the $x$-axis, but it may touch the $x$-axis. Moreover the rest of the path cannot go below the $x$-axis. So if we remove the first flat step and add a down step at the end of $p$ we get a Schr\"oder path that does not have a flat step on the $x$-axis. Also exchanging a flat step with a down step reduces the length of the path to $2n$. These paths are counted by the small Schr\"oder numbers $S_n$ \eqref{lsch}.   

On the other hand if $p$ starts with a down step then it must have an up step immediately after that and the rest of the path cannot have any flat step on the $x$-axis and must lie above the $x$-axis, although it may touch the $x$-axis. So if we remove the initial two steps (DU) from $p$ and add a down step at the end we again get a Schr\"oder path of length $2n$ that does not have a flat step on the $x$-axis. Adding these two cases we get the large Schr\"oder numbers $R_n$. This shows that $R_n = 2 S_n$.

We can also look at similar relations between Motzkin and Riordan numbers. We know that the Motzkin and the Riordan numbers are related by the relation 
\begin{equation} \label{mrnum}
M_n = J_n + J_{n+1}.
\end{equation} 
Here we can use the same argument that we used for Schr\"oder numbers to give a combinatorial interpretation.
 
Consider the paths described in Theorem \ref{t8}(4) that start with a flat or an up step with length $n + 1 (= 2k + l +1)$ and end at height one. Among these paths consider those with all the flat or down steps on or below the $x$-axis. Since all the steps of these paths except the last stay on or below the $x$-axis, removing the last up step gives us the negatives of the Motzkin paths. These are counted by the Motzkin numbers $M_n$ and these are equinumerous, by Theorem \ref{t8}(4), with those paths with exactly one flat or up step on or below the $x$-axis. 

But these also fall into two classes, those starting with a flat step and those starting with a down step. Let $q$ be a path of this form. If $q$ starts with a flat step then it cannot have any other flat step on the $x$-axis. Moreover the rest of the path will lie above the $x$-axis although it may touch the $x$-axis. So if we remove the first flat step and add a down step at the end we get a Motzkin path of length $n+1$ that does not have a flat step on the $x$-axis. Since exchanging the flat step with a down step does not change the length of the path, these paths are counted by the Riordan numbers $J_{n+1}$.   

On the other hand if $q$ starts with a down step then it must have an up step immediately after that and the rest of the path must lie above the $x$-axis. So if we remove the initial two steps (DU) from $q$ and add a down step at the end we get a Motzkin path of length $n$ that does not have a flat step on the $x$-axis and these are counted by the Riordan numbers $J_n$. This shows the relation \eqref{mrnum}.

\chapter{Generating functions}
Generating functions are very useful in lattice path enumeration. Finding generating functions is equivalent to finding explicit formulas. Generating functions can be applied in many different ways, but the simplest is the derivation of functional equations from combinatorial decompositions. For example, every Dyck path can be decomposed into ``prime" Dyck paths by cutting it at each return to the $x$-axis:
\vspace{-4pt}
\begin{figure}[ht!]
	\centering
		\includegraphics[width=0.70\textwidth]{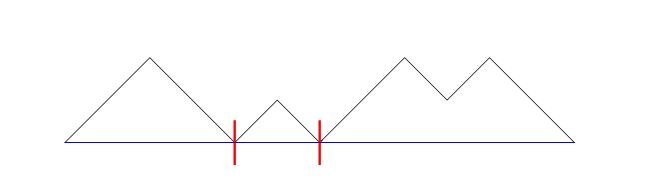}
	\caption{Primes}
	\label{fig:Prime decomposition}
\end{figure}

Moreover, a prime Dyck path consists of an up step, followed by an arbitrary Dyck
path, followed by a down step. It follows that if $c(x)$ is the generating function for Dyck
paths (i.e., the coefficient of $x^n$ in $c(x)$ is the number of Dyck paths with $2n$ steps) then
$c(x)$ satisfies the equation $c(x) = 1 / (1 - x c(x))$ which can be solved to give the generating
function for the Catalan numbers, 
\begin{equation} \notag
c(x) = \dfrac{1 - \sqrt{1 - 4x}}{2x} = \sum_{n=0}^{\infty} \dfrac{1}{n + 1} \binom{2n}{n} x^n.
\end{equation}

Many other lattice path results can be proved by similar decompositions. We'll use mainly three types of decompositions to prove generalized Chung-Feller theorems. The most common form of decomposition is decomposing the path into arbitrary positive and negative primes that start and end on the $x$-axis. We can also consider primes that start and end at height $1$.

For example, let us consider paths in $\mathcal{P}(n,1,0)$. There are $\binom{2n}{n}$ paths in $\mathcal{P}(n,1,0)$ and we know that the generating function for these paths is $\tfrac{1}{\sqrt{1 - 4x}}$. There are several ways we can decompose these paths. First we decompose a path $p$ in $\mathcal{P}(n,1,0)$ into positive and negative primes. The generating function for the positive primes is $xc(x)$ and the generating function for the negative primes is the same. So the generating function for all of these paths is 
\[\dfrac{1}{1-2xc(x)} = \dfrac{1}{\sqrt{1 - 4x}}.\]

Second we can decompose a path $p$ into positive primes separated by (possibly empty) negative paths. 
\begin{figure}[ht!]
	\centering
		\includegraphics[width=0.70\textwidth]{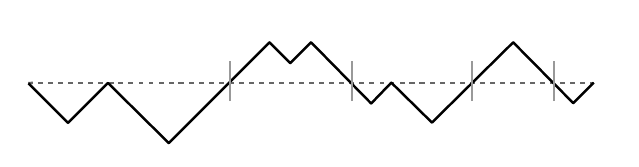}
	\caption{Decomposition of a path into positive primes and negative paths}
	\label{fig:Prime decomposition2}
\end{figure}
Here we have alternating negative paths and positive primes, starting and ending with a negative path. The generating function for negative paths is $c(x)$. So the generating function for all such paths is 
\[\sum_{k=0}^{\infty} c(x) [x c(x) \cdot c(x)]^k = \dfrac{c(x)}{1-x c(x)^2} = \dfrac{1}{\sqrt{1 - 4x}}.\]
 
Finally, we can decompose a path $p$ into alternating positive and negative paths. Let the generating function for nonempty positive and negative paths be $P$ and $N$ respectively. So \[P=N=c(x)-1=x c(x)^2.\]
Therefore the generating function for all paths is 
\begin{equation}
\begin{aligned}
(1+P) \dfrac{1}{1-N P} (1+N) &=  \dfrac{c(x)^2}{1-(x c(x)^2)^2} = \dfrac{c(x)}{1-x c(x)^2}\\ &= \dfrac{1}{\sqrt{1 - 4x}}.
\end{aligned}
\end{equation}
We can also use similar decompositions for paths having different types of steps or ending at other height.
 
\clearpage
\section{Counting with the Catalan generating function}\label{sec:eighth}
In this section we'll give another proof of Theorem \ref{t5} using the generating function approach. First we define the generating functions for the paths described in Theorem \ref{t5}.

Let $x f(x,y)$ denote the generating function for the paths in $\mathcal{P}(n,1,1)$ that start with an up step, where we put a weight of $x$ on the up steps that start on or below the $x$-axis and we put a weight of $y$ on the up steps that start above the $x$-axis. Similarly we denote by $x g(x,y)$ the generating function for the paths in $\mathcal{P}(n,1,1)$ that start with a down step, where we put a weight of $x$ on the the down steps that start on or below the $x$-axis and we put a weight of $y$ on the down steps that start above the $x$-axis and finally we denote by $y h(x,y)$ the generating function for the paths in $\mathcal{P}(n,1,1)$ putting a weight of $x$ on the vertices that are on or below the $x$-axis and a weight of $y$ on the vertices that are above the $x$-axis except for the first vertex.

With these weights, we have the following theorem which is equivalent to Theorem \ref{t5}.
\begin{theorem} \label{e13}
The generating functions $f(x,y)$, $g(x,y)$ and $h(x,y)$ satisfies 
\begin{enumerate} 
\item  $f(x,y) = \sum_{n=0}^{\infty} C_n \sum_{i=0}^{n} x^i y^{n-i} $
\item	 $g(x,y) = \sum_{n=0}^{\infty} C_{n+1} \sum_{i=0}^{n} x^{i} y^{n-i}$
\item	$ h(x,y) = \sum_{n=0}^{\infty} C_n \sum_{i=0}^{2n} x^i y^{2n-i}$
\end{enumerate}
\end{theorem}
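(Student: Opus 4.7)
The plan is a two-step procedure: a combinatorial decomposition yields each of $f$, $g$, $h$ in closed form in terms of the Catalan generating function $c(z)=\sum_{n\ge 0} C_n z^n$, and then algebraic simplification using the Catalan identity $zc(z)^2 = c(z)-1$ matches this closed form against the stated sum. Throughout I use the prime framework from the chapter introduction, together with the canonical factorization of a path in $\mathcal{P}(n,1,1)$ as (free initial segment from $(0,0)$ to $(\tau,0)$) $\cdot\, U\,\cdot$ (elevated Dyck path from $(\tau+1,1)$ to $(2n+1,1)$), where $\tau$ is the last time the path visits height $0$.

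First I would compute weighted prime generating functions in each of the three settings. For $f$, so that up steps at height $\le 0$ carry weight $x$ and those at height $\ge 1$ carry weight $y$, a positive prime contributes $yc(y)$ and a negative prime contributes $xc(x)$ (after shifting away the stripped initial $U$). For $g$, tracking down steps, the weights swap roles on the two prime classes but the series are the same. For $h$, where vertices are weighted, a positive prime of length $2k$ has $2k-1$ interior vertices above the axis and its right endpoint on the axis, giving $xy^{2k-1}$; summing yields $xy\,c(y^2)$, and similarly negative primes contribute $x^2 c(x^2)$. The elevated Dyck tail has generating function $c(y)$ in the down-step version and $yc(y^2)$ in the vertex version. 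Assembling the pieces gives
\begin{align*}
f(x,y) &= \frac{1}{1 - xc(x) - yc(y)},\\
g(x,y) &= \frac{c(x)\,c(y)}{1 - xc(x) - yc(y)},\\
h(x,y) &= \frac{c(y^2)}{1 - xy\,c(y^2) - x^2 c(x^2)},
\end{align*}
where for $f$ one strips the leading $U$ (of weight $x$) and decomposes the remainder; for $g$ the initial segment must begin with a negative prime, supplying the factor $xc(x)$; and for $h$ the initial segment is an unrestricted sequence of primes with every vertex after the first being weighted.

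The main remaining task is algebraic: show each closed form agrees with the stated power series. The key identity is
\[
\sum_{n\ge 0} a_n \sum_{i=0}^n x^i y^{n-i} = \frac{x\,a(x) - y\,a(y)}{x-y}, \qquad a(z):=\sum_n a_n z^n.
\]
Applied with $a_n=C_n$ this immediately gives $\sum_n C_n \sum_i x^i y^{n-i} = (xc(x)-yc(y))/(x-y)$; with $a_n=C_{n+1}$, using $\sum_n C_{n+1}z^n = (c(z)-1)/z$, it gives $(c(x)-c(y))/(x-y)$; and the analogous identity with exponent $2n$ in place of $n$ yields $(xc(x^2)-yc(y^2))/(x-y)$ for part~3. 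So the theorem reduces to three purely algebraic identities in $c(x),c(y)$ (or $c(x^2),c(y^2)$).

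To verify these I set $u=c(x)$, $v=c(y)$ and use $xc(x)^2=c(x)-1$ to get $xc(x)=1-1/u$ and $x=(u-1)/u^2$, and likewise for $y,v$. A short calculation gives $xc(x)-yc(y)=(u-v)/(uv)$ and $x-y=(u-v)(u+v-uv)/(u^2v^2)$, which collapses both the $f$ and $g$ identities to $uv/(u+v-uv)$ and $u^2v^2/(u+v-uv)$, respectively. The main obstacle I expect is the third identity for $h$: the cross term $xy\,c(y^2)$ breaks the symmetry enjoyed in the first two cases, so I would verify it directly by setting $u=c(x^2)$, $v=c(y^2)$ and checking that $(xu-yv)(1-xyuv)=uv(x-y)$ via expansion and the substitutions $x^2u^2=u-1$, $y^2v^2=v-1$.
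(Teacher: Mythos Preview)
Your approach is essentially the paper's: combinatorial decomposition into primes produces the same closed forms for $f$ and $g$, and an equivalent one for $h$, after which the Catalan relation $zc(z)^2=c(z)-1$ reduces everything to the ``divided difference'' identities you wrote down. The paper's part~3 decomposition groups primes slightly differently (negative paths alternating with positive primes rather than a free sequence of primes), giving $h(x,y)=\dfrac{c(x^2)c(y^2)}{1-xy\,c(x^2)c(y^2)}$ instead of your $\dfrac{c(y^2)}{1-xy\,c(y^2)-x^2c(x^2)}$; these are equal since $x^2c(x^2)=1-1/c(x^2)$.

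One small bookkeeping slip: the final identity you propose to check, $(xu-yv)(1-xyuv)=uv(x-y)$, is the cross-multiplication of the \emph{paper's} closed form for $h$, not your own. Starting from your form, the identity to verify is $(xu-yv)(1-xyv-x^2u)=v(x-y)$, which also falls out immediately from $x^2u^2=u-1$ and $y^2v^2=v-1$. Either route is fine, but you should reconcile the two forms (or just derive the paper's form directly) before invoking that identity.
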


\begin{proof} \ 
\begin{enumerate}[fullwidth, itemsep=1em]
\item To prove Theorem \ref{e13}(1) we first show that 
\begin{equation} \notag
	 f(x,y)=\dfrac{1}{1-x c(x) - y c(y)}.
\end{equation}
	Consider paths in $\mathcal{P}(n,1,1)$ starting with an up 
	step and ending at height $1$. We want to count all such paths 
	according to the number of up steps that start on or below the 
	$x$-axis with weights $x$ and $y$ as described above.
	
	Any path $p$ of this form has a total of $2n + 1$ steps with $n+1$ up steps 
	and $n$ down steps. If we remove the first step of $p$ and shift the path one
	level down we get a path in $\mathcal{P}(n,1,0)$ of length $2n$,
	where the up steps originally starting on or below the $x$-axis are now up
    steps starting below the $x$-axis. The generating function for these paths 
    is $f(x,y)$, where every up step below the $x$-axis is weighted $x$ and every 
    up step above the $x$-axis is weighted $y$. 
    	
   	We can factor this path into positive and negative primes, where a positive 
   	prime path is a path in $\mathcal{P}(n,1,0,+)$ that starts with an up step and comes 
	back to the $x$-axis only at the end and a negative prime path is a path
	in $\mathcal{P}(n,1,0,-)$ that starts with a down step and returns to the $x$-axis only at 
	the end. We know that the number of positive prime paths of length $2n$ is the 
	$(n-1)$th Catalan number. So the generating function for the positive prime paths (denoted by $f_1^{+}(x)$) 
	is given by
\begin{equation} \notag
	f_1^{+}(y) = \sum_{n=1}^{\infty} C_{n-1} y^n = y c(y).
\end{equation}
    	Similarly the generating function for the negative prime paths
    	(denoted by $f_1^-(y)$) is given by
\begin{equation} \notag
	f_1^{-}(x) = \sum_{n=1}^{\infty} C_{n-1} x^n = x c(x).
\end{equation}
    	Since an arbitrary path can be factored into $l$
    	primes (positive or negative) for some $l$, the generating
    	function for all paths is         
\begin{equation} \notag
	f(x,y)=\sum_{l=0}^{\infty} (f_1^+ + f_1^-) ^l=\dfrac{1}{1-f_1^+-f_1^-}=\dfrac{1}{1-x c(x) - y c(y)}.
\end{equation}
    	Including the initial up step we get the generating function of paths that start with an up step from $(0,0)$  and end at height $1$ as         
\begin{equation} \notag
	x f(x,y)=\dfrac{x}{1-x c(x) - y c(y)}.
\end{equation}
Now we'll show 
\begin{equation} \label{e13a}
	\dfrac{x c(x) - y c(y)}{ x - y} = \dfrac{1}{1-x c(x) - y c(y)} 
\end{equation} 
or
\begin{equation} \notag
	(x c(x) - y c(y))(1 - x c(x) - y c(y)) = x-y.
\end{equation}
Starting with the left-hand side we get
\begin{align} \notag
    	(x c(x) &- y c(y))(1 - x c(x) - y c(y))\\ \notag
            &=  x c(x) - y c(y) - x^2 c(x)^2 + y^2 c(y^2)\\ \notag
            &= x(1+x c(x)^2) - y(1+y c(y)^2) - x^2 c(x)^2 + y^2 c(y^2)\\ \notag
            &= x + x^2 c(x)^2 - y + y^2 c(y)^2 - x^2 c(x)^2 + y^2 c(y^2)\\ \notag
            &= x - y.
    \end{align}

\item To prove Theorem \ref{e13}(2) we first show that 
\begin{equation} \notag
	 g(x,y)=\dfrac{ c(x) c(y)}{1 - x c(x) - y c(y)}.
\end{equation}
	Consider paths in $\mathcal{P}(n,1,1)$ starting with a down step and ending at height $1$. We want 
    	to count all such paths according to the number of down steps
    	that start on or below the $x$-axis. The generating function of these paths is $x g(x,y)$ with weights $x$ and $y$ on the down steps as defined before. 

	Since the paths start with a down step, they start with a negative
    	prime path. So we can write any path starting from $(0,0)$ with a
   	down step and ending at $(2n+1,1)$ in the form       
\begin{equation}\notag
	p = q^- Q q_*^{+}
\end{equation}
    	where $q^-$ is a negative prime path, $Q$ is an arbitrary 
    	path in $\mathcal{P}(n,1,0)$ that starts and ends on the $x$-axis, and
    	$q_*^{+}$ is a path that stays above the $x$-axis and ends 
    	at $(2n+1,1)$. 
    
	So the generating function for positive prime paths is given by 
\begin{equation} \notag
   	g_1^+(y) = y c(y)
\end{equation}
	and the generating function for negative prime paths $q^-$ is given by 
\begin{equation} \notag
	g_1^-(x) = x c(x).
\end{equation}
	If we add an extra down step to $q_*^{+}$ we get a positive prime path. Therefore $q_*^{+}$ has the 	
	generating function $g_*^{+}(y) = c(y)$. We also know $Q$ has the generating function 
	$(1 - g_1^-(x) - g_1^+(y))^{-1}$. Therefore	the generating function for paths of the form $p$ can be written as    
\begin{align} \notag
 x g(x,y) = g_1^-(x) (1- g_1^-(x) - g_1^+(y))^{-1} g_*^{+}(y) =  \dfrac{x c(x) c(y)}{1 - x c(x) - y c(y)}.
\end{align}
Using the identity \eqref{e13a} we find 
 \begin{align} \notag
    	\dfrac{c(x)c(y)}{ 1 - x c(x) - y c(y)} &= c(x)c(y) \dfrac{x c(x) - y c(y)}{ x - y}\\ \notag
            	  &= \dfrac{x c(x)^2 c(y) - y c(y)^2 c(x)}{ x - y}\\ \notag
            	  &= \dfrac{(c(x)-1) c(y) - (c(y)-1) c(x)}{ x - y}\\ \notag
            	  &= \dfrac{c(x)-c(y)}{x-y}. 
    \end{align}

\item Finally to prove Theorem \ref{e13}(3) we first show that 
\begin{equation} \notag
	 h(x,y)=\dfrac{ c(x^2)c(y^2) }{1- x y c(x^2) c(y^2)}.
\end{equation}
 	We consider any path $p \in \mathcal{P}(n,1,1)$ having the following weights on the steps 
	above the $x$-axis and below the $x$-axis: We weight the steps ending at vertices that lie above the 
	$x$-axis by $y$, and we weight steps ending at vertices on or below the $x$-axis by $x$. The generating function for these paths is $y h(x,y)$. We decompose 
	$p$ in the following way:
\begin{equation} \notag
    	p = p_1^{-} p_1^{+}p_2^{-} p_2^{+} \dots p_m^{-} p_m^{+} p_*^{-} p_*^{+}
\end{equation}    
	for $m \ge 0$, where each $p_i^-$ is a negative path, each $p_i^{+}$ is a positive prime path, $p_*^{-}$ is the last negative path, and 	$p_*^{+}$ is the last positive path that leaves $x$-axis for the last time and ends at height $1$. The generating function of the negative 
	paths (denoted by $h_1^-(x)$) is 
\begin{equation} \notag
	h_1^-(x) =  c( x^2).
\end{equation}
The generating function of the positive prime paths (denoted by $h_1^+(x)$) is
\begin{equation}  \notag
    	h_1^+(x) = x y c(y^2)
\end{equation}
    	and the generating function for paths that look like $p_*^{+}$ is 
\begin{equation} \notag
    	h_*^{+}(x) = y c(y^2).
\end{equation}
    	Therefore the generating function for paths of the form $p$ is
\begin{align} \notag
	y h(x,y) = \dfrac{1}{1- h_1^-(x) h_1^+(y)} h_1^-(x) h_*^{+}(y) =   \dfrac{y c(x^2)c(y^2) }{1- x y c(x^2) c(y^2)}.	
\end{align}
Now to complete the proof we need to show that 
\begin{equation} \label{id3}
\dfrac{c(x^2)c(y^2)}{ 1 - x y c(x^2) c(y^2)} \cdot \dfrac{x - y} {x c(x^2) - y c(y^2)} =1.
\end{equation}
Starting with the left hand side we get
\begin{align} \notag
                    \dfrac{c(x^2)c(y^2)}{ 1 - x y c(x^2) c(y^2)} \cdot & \dfrac{x - y} {x c(x^2) - y c(y^2)}\\ \notag
                    &= \dfrac{c(x^2)c(y^2)(x - y)}{x c(x^2) - y c(y^2)-x^2 y {c(x^2)}^2 c(y^2) - x y^2 c(x^2) {c(y^2)}^2}\\ \notag
                    &=  \dfrac{c(x^2)c(y^2)(x - y)}{(1+ y^2 {c(y^2)}^2) x c(x^2) - (1+ x^2 {c(x^2)}^2) y c(y^2)}\\ \notag
                    &=  \dfrac{c(x^2)c(y^2)(x - y)}{x c(y^2) c(x^2) - y c(x^2) c(y^2)}\\            \notag      
                    &= 1. \qedhere
\end{align}
\end{enumerate}
\end{proof}
\clearpage
\section{The left-most highest point}\label{sec:seventeenth}

In this section we'll show another type of equidistribution property with respect to the left-most highest point of paths in $\mathcal{P}(n,1,1)$. We find that the number of paths in $\mathcal{P}(n,1,1)$ with $i$ steps before the left most highest point is independent of $i$.

Given a sequence $f=(a_1, a_2, \dots, a_n) \in \Lambda$ of distinct real numbers with partial sums $s_0=0$, $s_1=a_1$,$\dots$, $s_n=a_1+\cdots+a_n$, where $\Lambda$ is the set of sequences obtained by permuting the elements of $\{a_1, a_2, \dots, a_n\}$, we define the following two numbers:
\begin{align*}
P(f) &= \text{the number of strictly positive terms in the sequence } (s_0, s_1, \dots, s_n)\\
L(f) &= \text{the smallest index } k=(0,1,\dots,n) \text{ with } s_k = \underset{0\le m \le n}{\text{max}} s_m.
\end{align*}
Thus for any permutation of the sequence $f\in \Lambda$, both $P(f)$ and $L(f)$ are natural numbers between $0$ and $n$ and the {\it equivalence principle} of Sparre Andersen \cite{sparre} states that the distribution $P(f)$ and $L(f)$ over the $n!$ permutations of $\Lambda$ are identical. In this section we show that the equivalence principle of Sparre Andersen gives us another Chung-Feller type phenomenon. This was also studied by Foata \cite{foata}, Woan \cite{ee}, and Baxter \cite{baxter}. 

For a lattice path $p\in \mathcal{P}(n,r,h)$ the numbers $P(f)$ and $L(f)$ becomes the number of vertices of the path $p$ that lie on or above the $x$-axis and the position of the left-most highest vertex respectively. First we consider paths in 
$\mathcal{P}(n,1,h)$. Then we claim that every path ending at height $1$ has a unique conjugate whose left-most highest vertex lies at the end. In other words we have the following theorem.

\begin{theorem}
If $p\in \mathcal{P}(n,1,1)$ is a path whose left-most highest vertex lies at the end then the left-most highest vertex of $\sigma^{i}(p)$ lies at position $2n-i$. 
\end{theorem}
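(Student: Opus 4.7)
The plan is to compute the heights of the vertices of $\sigma^i(p)$ in terms of the heights of $p$ via a periodic extension, and then identify the leftmost maximum directly. Writing $h_j$ for the height of the $j$-th vertex of $p$ (so $h_0 = 0$ and $h_{2n+1} = 1$), I would extend $h$ to all integers by the rule $h_{j+(2n+1)} = h_j + 1$; this is compatible with the periodic step structure and with the fact that one full period raises the height by one. Under this convention, the $j$-th vertex of $\sigma^i(p)$ has height $h_{i+j} - h_i$ for $j = 0, 1, \dots, 2n+1$. The hypothesis that the leftmost highest vertex of $p$ lies at the end translates into the statement that $h_{2n+1} = 1$ is the unique maximum of $h_0, \dots, h_{2n+1}$; since the heights are integers, this forces $h_j \le 0$ for every $j = 0, 1, \dots, 2n$.

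Next, I would locate the leftmost $j$ maximizing $h_{i+j}$ by splitting at the wrap-around. For $j \in \{0, 1, \dots, 2n+1-i\}$, the index $i+j$ stays in $[i, 2n+1]$, and $h_{i+j} = 1$ is attained only at $j = 2n+1-i$, since every other $h_k$ in this range is $\le 0$. For $j > 2n+1-i$, the extension gives $h_{i+j} = h_{i+j-(2n+1)} + 1 \le 0 + 1 = 1$. Therefore the leftmost maximum of the height of $\sigma^i(p)$ occurs at $j = 2n+1-i$, which matches the position claimed in the theorem under the paper's indexing convention for vertices (with an off-by-one coming from whether one counts the initial vertex).

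The main subtlety I anticipate is the tie case: if some $h_m = 0$ for $1 \le m \le i$, then the maximum $1 - h_i$ is achieved both at $j = 2n+1-i$ and at some larger $j$ in the wrapped portion. The observation that rescues the argument is that every such tying $j$ is strictly greater than $2n+1-i$, so the ``leftmost'' conclusion is unaffected. Beyond this the proof is just careful bookkeeping with the periodic extension of $h$.
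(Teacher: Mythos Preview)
Your argument is correct and essentially self-contained: the periodic extension $h_{j+(2n+1)}=h_j+1$ is exactly the right bookkeeping device, and your case split cleanly shows that the maximum of $j\mapsto h_{i+j}$ over $0\le j\le 2n+1$ is first attained at $j=2n+1-i$. Your handling of ties in the wrapped portion is the only delicate point and you address it correctly. The discrepancy with the stated position $2n-i$ is, as you suspect, an indexing artifact in the paper rather than an error in your reasoning.

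However, your approach is \emph{quite different} from the paper's. The paper does not argue directly about the heights of $\sigma^i(p)$. Instead it gives a generating-function proof of the underlying equidistribution statement: it decomposes an arbitrary path in $\mathcal{P}(n,1,1)$ at its left-most highest vertex $v$ (at height $k$) into a front piece counted by $(xc(x^2))^k$ and a back piece counted by $c(y^2)(yc(y^2))^{k-1}$, sums over $k$, and simplifies using the identity $\dfrac{c(x^2)c(y^2)}{1-xyc(x^2)c(y^2)}=\dfrac{xc(x^2)-yc(y^2)}{x-y}$ established earlier. This yields $P(x,y)=x\cdot\dfrac{xc(x^2)-yc(y^2)}{x-y}$, whose coefficient of $x^{i+1}y^{2n-i}$ is $\tfrac{1}{2n+1}\binom{2n+1}{n}$ independently of $i$. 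Strictly speaking this proves equidistribution of the position of the left-most highest vertex; the statement about $\sigma^i(p)$ then follows because each conjugacy class has exactly $2n+1$ distinct members (by the argument of Theorem~\ref{t2}).

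What each approach buys: yours is elementary, bijective, and proves the precise claim about $\sigma^i(p)$ directly, with no appeal to generating functions or to the earlier cycle machinery. The paper's route, while less direct for this particular statement, ties the result into the generating-function framework of the chapter and reuses the Catalan identity already proved there; it also makes the connection to Sparre Andersen's equivalence principle explicit at the level of generating functions.
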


\begin{proof} 
Let us consider the generating function approach. Suppose the path ends at height $h$ and the left-most highest vertex $v$ lies at height $k$. We can decompose the path into two parts $a$ and $b$ at the vertex $v$. Part $a$ of the path starts at the origin and end at height $k$ and part $b$ of the path starts at height $k$ and ends at height $k-h$, where $k \ge h$. We weight the steps before the vertex $v$ by $x$ and the steps after the vertex $v$ by $y$. Then the generating function of the part $a$ is $(x c(x^2))^k$ and the generating function of part $b$ is $c(y^2)(y c(y^2))^{k-h}$. Therefore the generating function of the whole path 
(denoted by $P(x,y)$) is 
\begin{equation}
P(x,y)=\sum_{k=1}^{\infty} (x c(x^2))^k c(y^2)(y c(y^2))^{k-h}.
\end{equation}  
Taking $h=1$ we get 
\begin{equation} \label{lh}
\begin{aligned} 
P(x,y) &= \sum_{k=0}^{\infty} (x c(x^2))^k c(y^2)(y c(y^2))^{k-1}\\
		&= x c(x^2) c(y^2) \sum_{k=0}^{\infty} (x c(x^2))^k (y c(y^2))^{k}\\
		&= \dfrac{x c(x^2) c(y^2)}{1-x y c(x^2) c(y^2)}.	
\end{aligned}
\end{equation}  
From equation \eqref{id3} and \eqref{lh} we find that 
\begin{equation} \notag
P(x,y)=x \dfrac{x c(x^2)-y c(y^2)}{x-y}	
\end{equation}  
and the coefficient of $x^{i+1} y^{2n-i}$ in $P(x,y)$ is $\tfrac{1}{2n+1} \binom{2n+1}{n}$.
\end{proof}
\clearpage
\section{Counting with the Narayana generating function}\label{sec:nineth}

Recall that the Narayana numbers are
\begin{equation} \notag
	N(n,k) = \dfrac{1}{n} \binom{n}{k} \binom{n}{k-1}
\end{equation}
 	for $n \ge 1.$ We can get the Catalan numbers from the Narayana
 	numbers by 
\begin{equation} \label{nara}
	\sum_{k =1}^n N(n,k) = C_n. 
\end{equation}
    	We define the Narayana generating function by   
\begin{equation} \label{e32}
	E(x,  s) = \sum_{1\le m\le n} N(n,m) s^{m-1} x^{n}.
\end{equation}
It is known that $E(x, s)$ can be expressed explicitly as
\begin{equation} \label{exs}
	E(x, s) =  \dfrac{1-x-x s-\sqrt{(1-x+x s)^2-4x s}}{2 x s}.
\end{equation}
	Notice that $E(x,1) =  c(x) -1$. We will use several identities satisfied by the generating function $E$ which can be proved by a straightforward computation which we omit. We list them here
\begin{equation} \label{id}
\begin{aligned}
1+ \dfrac{ s E(x, s ) - t E(x, t)}{s-t} 	=1+ \dfrac{ E(x,s)(1+t  E(x,t))}{1-t E(x,s) E(x,t)}
			&= \dfrac{1 + E(x, t)}{1 -  s  E(x, s) E(x, t)}\\
			&= \dfrac{1 +   E(x, s)}{1 - t E(x, s) E(x, t)}
\end{aligned}
\end{equation}
\begin{equation} \label{id2}
\begin{aligned}
\dfrac{ E(x, s ) - E(x, t)}{s-t} &= \dfrac{  (1 +  E(x, s)) E(x, s) E(x, t)}{1- t E(x, s) E(x, t)}\\
					& = \dfrac{  x(1 +  E(x, t)) E(x, s)}{1-x(1+sE(x,s)) - x t (1+ E(x, t))} .
\end{aligned}
\end{equation}

Next we'll give a generating function proof of Theorem \ref{t6} by decomposing the paths into positive and negative parts or into primes. We recall here that by a peak lying on or below the $x$-axis we mean the vertex between the
up step and the down step lying on or below the $x$-axis and similarly for valleys/double rises/double falls.

\begin{proof}[Proof of Theorem \ref{t6}.] \  
\begin{enumerate}[fullwidth, itemsep=1em] 
\item For the first part of Theorem \ref{t6} we want to count paths $p_1 \in \mathcal{P}(n,1,1)$ that start with a down step and end with an up step according to the number of peaks on or below the $x$-axis. We take $\mathit{L}_{\pk}^+(x, s) $ and $\mathit{L}_{\pk}^-(x, t)$ to be the generating function of the nonempty positive paths and the nonempty negative paths in $\mathcal{P}(n,1,0)$ respectively according to peaks. From \eqref{e32} we see that if $x$ weights the semi-length and $s$ weights the number of peaks then $ s E(x,s)$ is the generating function for nonempty Dyck paths according to peaks. Therefore we can express $\mathit{L}_{\pk}^+(x, s) $  in terms of the Narayana generating function $E(x, s)$ as 
\begin{align} \label{pos}
\mathit{L}_{\pk}^+(x, s) &=\sum_n \sum_{\substack{\text{all nonempty }\\ p \in \mathcal{P}(n,1,0,+)}} x^n s^{\pk(p)}   = s E(x, s)
\end{align}
where $n$ is the semi-length of $p$ and $\pk(p)$ is the number of peaks of $p$. If we reflect a Dyck path about the $x$-axis we get a negative path where the peaks become valleys and the valleys become peaks. Since the number of valleys in a Dyck path is one less than the number of peaks, the generating function $\mathit{L}_{\pk}^-(x, t)$ can be expressed in terms of the Narayana generating function as
\begin{align*}
\mathit{L}_{\pk}^-(x, t) &= \sum_n \sum_{\substack{\text{all nonempty }\\ p \in \mathcal{P}(n,1,0,-)}} x^n t^{\pk(p)}
			= \sum_n \sum_{\substack{\text{all nonempty }\\ p \in \mathcal{P}(n,1,0,+)}} x^n t^{v(p)}\\
			&= E(x, t)
\end{align*}
where $v(p)$ is the number of valleys of $p$. 

Since the path $p_1$ starts with a down step, it starts with a negative path. If we remove the last up step then we can write the remaining path $G_p$ in the form
\begin{equation} \notag
	G_p = g_1^-g_1^+g_2^-g_2^+ \cdots g_m^-g_m^+ g_{p*}^- U
\end{equation}
	where each $g_i^-$ is a nonempty negative path and each  $g_i^+$ is a nonempty positive path for $0\le i \le m$ and $g_{p*}^-$ is the last negative path which can be empty.
\begin{figure}[ht!]
	\centering
		\includegraphics[width=0.80 \textwidth] {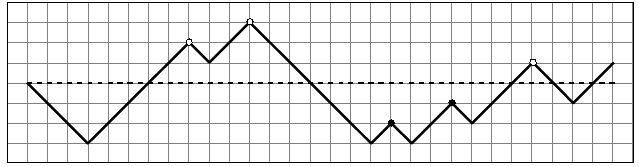}
	\caption{Peaks on or below the $x$-axis}
	\label{fig:latp}
\end{figure}
	Therefore, taking $L_{\pk}(x, s, t)$ to be the generating function for all paths of the form $p_1$ according to the semi-length and number of peaks with weight $s$ on the peaks that lie above the $x$-axis and weight $t$ on the peaks that lie on or below the $x$-axis, we can write
\begin{align*} 
	L_{\pk}(x, s, t) &= \dfrac{1}{1 - \mathit{L}_{\pk}^-(x, t) \mathit{L}_{\pk}^+(x, s)} (1+\mathit{L}_{\pk}^-(x, t))\\ \notag
	 &= \dfrac{1 +  E(x, t)}{1 -  s E(x, t) E(x, s) }\\
	 &= 1 + \dfrac{ s E(x, s ) - t  E(x, t)}{s-t} \quad \quad \text{by \eqref{id}}\\
	&= 1 + \sum_{1\le m \le n} N(n,m) x^n (s^{m-1} + s^{m-2} t+ \cdots + t^{m-1}).
\end{align*}
	This shows that the coefficient of $x^n s^i t^j $ in the expansion of $L_{\pk}(x, s, t)$ is given by the Narayana number $\tfrac{1}{k} \binom{n}{k-1} \binom{n-1}{k-1}$. 

\item The second part of the theorem counts paths $G_v \in \mathcal{P}(n,1,1)$ that start with an up step and end with a down step with respect to the valleys on or below the $x$-axis. For convenience we'll decompose these paths into positive and negative paths with respect to height $1$ instead of the $x$-axis. So a positive/negative path in this case will be a path that starts and ends at height $1$ and stays above/below the $x$-axis respectively.

We take $\mathit{L}_{v}^+(x, s) $ and $\mathit{L}_{v}^-(x, t)$ to be the generating functions of nonempty positive paths and negative paths that start and end at height $1$ according to valleys where $s$ is the weight on valleys that stay above the $x$-axis and $t$ is the weight on valleys that stay on or below the $x$-axis. They may be expressed in terms of the Narayana generating function $E(x, y)$ as follows	
\begin{align*}
	\mathit{L}_{v}^+(x, s) &= \sum_n \sum_{p \in \mathcal{P}(n,1,0,+)} x^n s^{v(p)}
	=  E(x, s)\\
	\mathit{L}_{v}^-(x, t) &= \sum_n  \sum_{p \in \mathcal{P}(n,1,0,-)} x^n t^{v(p)}
			=  t E(x, t).
\end{align*}
 	After the first up step we cut	$G_v$ each time it crosses height $1$. Since the path $G_v$ ends with a down step, it ends with a positive path at height $1$ and $G_v$ will have alternating positive and negative parts after the first up step. So we can write any path $G_v$ in the form
\begin{equation} \notag
	G_v = U g_{v*}^+ g_1^- g_1^+ g_2^- g_2^+ \cdots g_k^-g_k^+ 
\end{equation}
where each $g_i^-$ is a nonempty negative path at height $1$ and each  $g_i^+$ is a nonempty positive path at height $1$ for $0\le i \le k$ and $g_{v*}^+$ is the first positive path at height $1$ that can be empty. The generating function of $g_v^*$ is $1+ \mathit{L}_{v}^+(x, s)$.	
	\begin{figure}[h]
	\centering
		\includegraphics[width=0.80\textwidth]{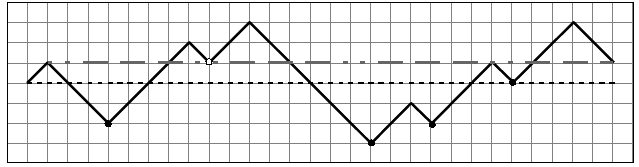}
	\caption{Valleys on or below the $x$-axis}
	\label{fig:latv}
\end{figure}
	If we denote by $L_v(x, s, t)$ the generating function for all such paths $G_v$ according to the semi-length and number of valleys with weight $s$ on the valleys that lie above the $x$-axis and weight $t$ on the valleys that lie on or below the $x$-axis, then we can write
	\begin{align*}
	L_v(x, s, t) &= \dfrac{1}{1-  \mathit{L}_{v}^+(x, s)  \mathit{L}_{v}^-(x, t)} (1+ \mathit{L}_{v}^+(x, s))\\ 
	 &= \dfrac{1+  E(x,s)}{1-  t E(x,s) E(x,t)}\\
	&= 1+ \dfrac{ s E(x, s ) -  t E(x, t)}{s-t} \quad \quad \text{by \eqref{id}}\\ 
	&=1 +  \sum_{1\le m \le n} N(n,m) x^n (s^{m-1} + s^{m-2} t+ \cdots + t^{m-1}).
\end{align*}
	So we see that the coefficient of $x^n s^i t^j $ in the expansion of $L_p(x, s, t)$ is given by the 
	Narayana number $\tfrac{1}{k} \binom{n}{k-1} \binom{n-1}{k-1} $.
	 
\item For the third part of the theorem we would like to count the paths $H_{\dr} \in \mathcal{P}(n,1,1)$ for $n>1$ that start with an up step and end with an up step with respect to the double rises on or below the $x$-axis. Since for each Dyck path the total number of peaks and 
double rises is equal to $n$, it is easy to find the generating function of the positive and negative paths with respect to double rises using \eqref{pos} and the fact that double rises in positive and negative paths have the same distribution. We take $\mathit{L}_{\dr}^+(x, s) $ and $\mathit{L}_{\dr}^-(x, t)$ to be the generating functions of the positive paths and the negative paths in $\mathcal{P}(n,1,0)$ according to double rises. Therefore 
\begin{align*}
 	\mathit{L}_{\dr}^+(x, s) &=\sum_n \sum_{p \in \mathcal{P}(n,1,0,+)}  x^n s^{\dr(p)}
						 = \mathit{L}_{\pk}^+(x s, s^{-1}) =  E(x, s)\\
 	\mathit{L}_{\dr}^-(x, t) &=\sum_n \sum_{p \in \mathcal{P}(n,1,0,-)}  x^n t^{\dr(p)}
						 =  E(x, t)
\end{align*}
	where $\dr(p)$ is the number of double rises of $p$. 
	
	If we decompose $H_{\dr}$ into positive and negative parts we see that whenever the path transitions from negative to positive we get an additional double rise that lies on the $x$-axis and if the last negative part of the path is not empty we get another double rise at the end. So we can write $H_{\dr}$ in the form 
\begin{equation} \notag
	H_{\dr} =  h_b^+ (h_1^- h_1^+ h_2^- h_2^+ \cdots h_k^- h_k^+) h_f^- U
\end{equation}
where each $h_i^-$ is a nonempty negative path and each  $h_i^+$ is a nonempty positive path for $0\le i \le k$, $h_{b}^+$ is the initial nonempty positive part of the path and  $h_{f}^-$ is the last negative path that can be empty.  The generating function of $h_{f}^- $ is $1 + t \mathit{L}_{\dr}^-(x, t) $. 

\begin{figure}[ht]
	\centering
		\includegraphics[width=0.80\textwidth]{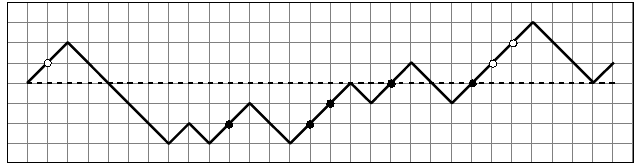}
	\caption{Double-rises on or below the $x$-axis}
	\label{fig:latdr}
\end{figure}
	Therefore taking $L_{\dr}(x, s, t)$ to be the generating function for all such paths according to the semi-length and number of double rises  with weight $s$ on the double rises that lie above the $x$-axis and weight $t$ on the double rises that lie on or below the $x$-axis, we have
\begin{align*} 
	L_{\dr}(x, s, t) &= \mathit{L}_{\dr}^+(x,s) \dfrac{1}{1- \mathit{L}_{\dr}^+(x, s) \mathit{L}_{\dr}^-(x, t) t} (1 + t \mathit{L}_{\dr}^-(x, t)) \\ 
				&=  \dfrac{ E(x,s)(1+t  E(x,t))}{1-t  E(x,s) E(x,t)}\\ 
				&=  \dfrac{ s  E(x, s ) -  t  E(x, t)}{s-t} \quad \quad \text{by \eqref{id}}\\
      				&=  \sum_{1\le m \le n} N(n,m) x^{n} (s^{m-1} + s^{m-2} t+ \cdots + t^{m-1}).
\end{align*}
So we see that the coefficient of $x^n s^i t^j $ in the expansion of $L_{\dr}(x, s, t)$ is given by the 
	Narayana number $\tfrac{1}{n-k+1} \binom{n}{k}  \binom{n-1}{k-1}$.
	
\item The fourth part of the theorem is the same as the third part where paths start and end with a down 	step 
	instead and are counted according to  double falls. Since the number of double rises and the number of double falls in any path have the same distribution they have the same generating function 
\begin{align*}
	\mathit{L}_{\df}^+(x, s) &=\sum_n \sum_{p \in \mathcal{P}(n,1,0,+)}  x^n s^{\df(p)}
						 =  E(x, s)\\
	\mathit{L}_{\df}^-(x, t) &=\sum_n \sum_{T \in \mathcal{P}(n,1,0,-)}  x^n t^{\df(p)}
						 =  E(x, t)
\end{align*}
where $\df(p)$ is the number of double falls of $p$. Similar to part three, note that whenever the path transitions from positive to negative we get an additional double fall that lies on the $x$-axis. These paths have the form 
\begin{equation}
	H_{\df} =  h_{b}^- (h_{1}^+ h_1^- h_2^+ h_2^-\cdots  h_{k}^+ h_{k}^- ) h_{f}^+
\end{equation}
where each $h_i^-$ is a nonempty negative path and each  $h_i^+$ is a nonempty positive path for $0\le i \le k$, $h_{b}^-$ is the initial nonempty negative part of the path and  $h_{f}^+$ is the final positive path that ends at height $1$. The generating function of $h_{f}^+ $ is $(1 + \mathit{L}_{\df}^+(x, t) ) \mathit{L}_{\df}^+(x, t)$ since $h_{f}^+ $ consists of an initial possibly empty positive path followed by an up step followed by a nonempty positive path.

\begin{figure}[ht]
	\centering
		\includegraphics[width=0.80\textwidth]{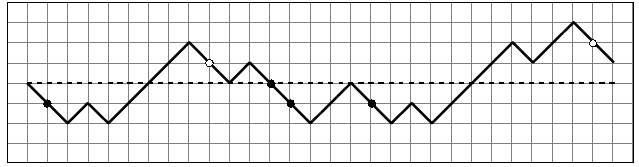}
	\caption{Double-falls on or below the x-axis}
	\label{fig:latdf}
\end{figure}
Therefore taking $L_{\df}(x, s, t)$ to be the generating function for all such paths according to the semi-length and number of double falls with weight $s$ on the double falls that lie above the $x$-axis and weight $t$ on the double falls that lie on or below the $x$-axis, we have
\begin{align*} \notag
	L_{\df}(x, s, t) &= \mathit{L}_{\df}^-(x, t) \dfrac{1}{1- t \mathit{L}_{\df}^+(x, s) \mathit{L}_{\df}^-(x, t) } (1 + \mathit{L}_{\df}^+(x, t) )  \mathit{L}_{\df}^+(x, t)\\
			&= \dfrac{ E(x,t) (1+E(x,s))E(x,s)}{1-t  E(x,t)E(x,s)}\\
			&=  \dfrac{ E(x, s ) -  E(x, t)}{s-t} \quad \quad \text{by \eqref{id2}}\\
			&= \sum_{1< m \le n} N(n,m) x^{n} (s^{m-2} + s^{m-1} t+ \cdots + t^{m-2}).
\end{align*}
So we see that the coefficient of $x^n s^i t^j $ in the expansion of $L_{\df}(x, s, t)$ is given by the 
	Narayana number $\tfrac{1}{n-k} \binom{n}{k-1}  \binom{n-1}{k}$.
\end{enumerate} 
The proofs of the fifth and sixth parts of the theorem are similar, so we leave them to the reader.
\end{proof}
\clearpage	
\section{Up steps in even positions}\label{sec:tenth}
	There is another well-known combinatorial interpretation of the Narayana numbers given by the following theorem. This was one of the first Narayana statistics observed \cite{mm}. We'll give a generalized Chung-Feller theorem that corresponds to this interpretation. D. Callan in \cite{bb} used a similar approach to give a combinatorial interpretation of the formula $\tfrac{j}{n} \binom{kn}{n+j}$.
	
	Let us consider paths in $\mathcal{P}(n-1,1,2)$, i.e., paths that end at height two, according to the number of up steps that start in even positions, where the positions are $0, 1, \dots, 2n-1$. We define an {\it even up step} to be an up step that starts in an even position and an {\it odd up step} to be an up step that starts in an odd position. 

\begin{theorem} \label{th91} \ 
\begin{enumerate}
\item For $k>1$, the number of paths in $\mathcal{P}(n-1,1,2)$ with $k-1$ even down steps that start with a down step with exactly $j$ even down steps on or below the $x$-axis is independent of $j$, $j=1, \dots, k-1$, and is given by the Narayana number $N(n,k) = \tfrac{1}{k-1} \binom{n}{k} \binom{n-1}{k-2}$.

\item The number of paths in $\mathcal{P}(n-1,1,2)$ with $k$ even up steps that start with an up step with exactly $j$ even up steps on or below the $x$-axis is independent of $j$, $j=1, \dots, k$, and is given by the Narayana number $N(n,k) = \tfrac{1}{k} \binom{n-1}{k-1} \binom{n}{k-1}$.
\end{enumerate}
\end{theorem}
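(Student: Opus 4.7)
The plan is to apply the cycle method at the level of \emph{bigrams} rather than individual steps. Any path in $\mathcal{P}(n-1,1,2)$ has $2n$ steps; I group them into $n$ consecutive pairs $b_1 b_2 \cdots b_n$ with each $b_i \in \{UU,UD,DU,DD\}$. The even positions $0,2,\dots,2n-2$ are exactly the starting positions of the bigrams, so an even up step (resp.\ even down step) is the first letter of a bigram in $\{UU,UD\}$ (resp.\ $\{DU,DD\}$). Crucially, cyclic shifts performed at the bigram level correspond to shifts by $2$ at the step level and therefore preserve the parity of positions; this is why bigrams are the natural setting here.

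Assign to each bigram the integer value $+1,0,0,-1$ for $UU,UD,DU,DD$ respectively. These values sum to $1$ (half the terminal height $2$), and the partial sum after the first $i$ bigrams is nonpositive if and only if the vertex at position $2i$ lies on or below the $x$-axis. For part (2), I take the special vertices to be the initial points of bigrams starting with $U$; there are exactly $k$ of them, one per even up step, and this set is preserved under bigram-level shifts. For part (1), I take instead the initial points of bigrams starting with $D$, of which there are $k-1$. Theorem \ref{spver} applied to the bigram sequence then tells us that the $k$ (resp.\ $k-1$) bigram-conjugates beginning with a special vertex --- equivalently, the conjugates whose underlying path starts with $U$ (resp.\ $D$) --- are equidistributed over $\{1,2,\dots,k\}$ (resp.\ $\{1,2,\dots,k-1\}$) according to the number of special vertices on or below the $x$-axis.

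The remaining step is a direct count of paths in each conjugacy class. For part (2), fixing the first step as $U$ at position $0$ leaves $k-1$ up steps to place among the remaining $n-1$ even positions and $(n+1)-k$ up steps to place among the $n$ odd positions, giving $\binom{n-1}{k-1}\binom{n}{k-1}$ paths; dividing by $k$ yields $N(n,k)=\tfrac{1}{k}\binom{n-1}{k-1}\binom{n}{k-1}$. For part (1), fixing the first step as $D$ leaves $k-2$ even down steps among $n-1$ even positions and $n-k$ odd down steps among $n$ odd positions, giving $\binom{n-1}{k-2}\binom{n}{k}$ paths; dividing by $k-1$ yields $N(n,k)=\tfrac{1}{k-1}\binom{n}{k}\binom{n-1}{k-2}$.

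The main obstacle is conceptual, not computational: the step-level cyclic shift used in Theorems \ref{t2} and \ref{spver} does not preserve the parity of positions, so ``even up step'' is not a legitimate notion of special vertex at the step level. The bigram reformulation bypasses this by packaging two consecutive steps into a single ``super-step'' whose integer value lies in $\{-1,0,1\}$ and whose sum is $1$; the cycle lemma then applies verbatim to the bigram sequence. Once one checks the straightforward translations --- an even up (down) step corresponds to a $U$-starting ($D$-starting) bigram, and an even vertex on or below the $x$-axis corresponds to a nonpositive bigram partial sum --- the proof is a direct application of Theorem \ref{spver}.
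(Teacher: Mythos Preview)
Your proof is correct and is essentially the same as the paper's: your ``bigram'' reformulation with values $+1,0,0,-1$ is precisely the paper's bijection to $2$-colored free Motzkin paths (sending $UU\mapsto U$, $DD\mapsto D$, $UD\mapsto$ dashed flat, $DU\mapsto$ solid flat), and your choice of special vertices as initial points of $D$-starting (resp.\ $U$-starting) bigrams matches the paper's choice of down-or-solid-flat (resp.\ up-or-dashed-flat) steps in the Motzkin path. The counting argument is identical.
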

\begin{proof}
We'll prove the first part of the theorem and leave the second to the reader as the proof is similar.

  	Any path in $\mathcal{P}(n-1,1,2)$ has $n+1$ up steps and $n-1$ down steps and a total of $2n$ positions ($n$ odd and $n$ even) for the steps. Here we only consider the paths in $\mathcal{P}(n-1,1,2)$ that start with a down step with exactly $k-1$ even down steps. 

Since the paths start with a down step, we have $k-2$ down steps to place in $n-1$ even positions. There are $\binom{n-1}{k-2}$ ways $k-2$ down steps can be even down steps, and the remaining $n-k$ down steps can be assigned to odd positions in $\binom{n}{n-k} = \binom{n}{k}$ ways. So in total there are $\binom{n}{k} \binom{n-1}{k-2}$ paths in $\mathcal{P}(n-1,1,2)$ that start with a down step with $k-1$ even down steps. Any path $p$ of this form in $ \mathcal{P}(n-1,1,2)$ has $k-1$ conjugates that start with even down steps. 

Theorem \ref{spver} only deals with paths that end at height $1$ therefore we cannot apply Theorem \ref{spver} here directly. But we can convert these paths into $2$-colored free Motzkin paths to apply Theorem \ref{spver}. A $2$-colored free Motzkin path is a path with four types of steps,  {\it up}, {\it down}, {\it solid flat} and {\it dashed flat} as shown in the Figure \ref{tcolor}. We can convert paths in $\mathcal{P}(n-1,1,2)$ into $2$-colored free Motzkin paths by taking two steps at a time and converting the $UU$s to $U$, $DD$s to $D$, $UD$s to {\it dashed flat} steps and $DU$s to {\it solid flat} steps. This bijection was given in \cite{tcolor}. 

Since we start with a path that ends at height $2$ the bijection will give us a $2$-colored free Motzkin path that ends at height $1$ with $k-1$ down and solid flat steps. If we take the initial vertices of the down steps and the solid flat steps of the $2$-colored free Motzkin paths as our special vertices then according to Theorem \ref{spver}, out of the $k-1$ conjugates of a $2$-colored free Motzkin path that start with a down or a solid flat step there is only one conjugate having $j$ down or solid flat steps on or below the $x$-axis for $j=1, \dots, k-1$. In terms of a path $p$ in $ \mathcal{P}(n-1,1,2)$ that starts with a down step with $k-1$ even down steps this means that there is one conjugate of $p$ having $j$ even down steps on or below the $x$-axis for $j=1, \dots, k-1$. Therefore the number of paths in $ \mathcal{P}(n-1,1,2)$ that start with a down step having exactly $k-1$ even down steps with $j$ even down steps on or below the $x$-axis is $\tfrac{1}{k-1} \binom{n-1}{k-2} \binom{n}{k}$.
\end{proof}
\begin{figure}[h!]
	\centering
		\includegraphics[width=0.90\textwidth]{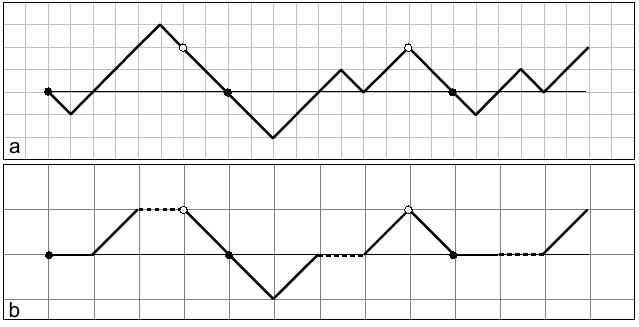}
	\caption{Down steps in even positions: (a) A path in $\mathcal{P}(n-1,1,2)$ and (b) a $2$-colored free Motzkin path of length $9$.}
	\label{tcolor}
\end{figure}
Note that if we take $j=1$ in Theorem \ref{th91}(2) then the path will lie above the $x$-axis except at the beginning. So if we remove the last up step and add a down step at the end we'll get a Dyck path of semi-length $n$. So the number of Dyck paths of semi-length $n$ with $k$ even up steps is $N(n,k)$.

\noindent
{\it Generating function proof:}
We can also prove Theorem \ref{th91} using generating functions: 

	We want to count paths in $\mathcal{P}(n-1,1,2)$ according to even down steps lying on or below the $x$-axis.
We can decompose each path into positive and negative paths. To find the generating function for the 	positive paths we first consider the positive prime paths. We weight the even down steps by 	$s$ and the odd down steps by $t$. A positive prime path does not return to the $x$-axis untill the end. So it starts with an up step followed by a positive path and ends with a down step. Let 
\begin{equation*}
M(x, s, t) = \sum_n \sum_{p \in \mathcal{P}(n,1,0,+)}  s^{e(p)} t^{o(p)} x^{n}
\end{equation*}
	and 
	\begin{equation*}
M^+(x, s, t) = \sum_n \sum_{p \in \mathcal{P}(n,1,0,+) }  s^{e(p)} t^{o(p)} x^{n}
\end{equation*} 
be the generating functions for positive paths and positive prime paths respectively where $n$ is the semi-length, $e(p)$ is the number of even down steps and $o(p)$ is the number of odd down steps. So the positive prime paths have the generating function 	
\begin{equation}
	M^+(x, s, t) = x t M(x, t, s)
\end{equation}
	and the generating function for the positive paths is 
\begin{align} \notag
	M(x, s, t)   &= \dfrac{1}{1 - M^+(x, s, t)} \\ \notag
				&= \dfrac{1}{1 - x t M(x, t, s)} \\
				&= \dfrac{1}{1 - \dfrac{x t \mathstrut}{1- x s M(x, s, t)}}.
\end{align}
	Solving for $M(x, s, t)$ gives us 
\begin{equation} \label{mxst}
	M(x, s, t) = 1+ \dfrac{1 - t x - s x - \sqrt{(1 - t x + s x)^2 - 4 s x}}{2 s x}
\end{equation}
	Note that 
\begin{equation} \label{e35}
\begin{aligned} 
	M(x,1,s) &=1+ E(x,s)\\
	M(x,t,1) &=1+ t E(x,t)
\end{aligned}
\end{equation}
	and 
\begin{equation*}
	E(x,y)=x M(x,y,1) M(x,1,y).
\end{equation*}
We would like to consider the even down steps starting on or below the $x$-axis. So we weight the even down steps starting above the $x$-axis by $a$ and the even down steps starting on or below the $x$-axis by $b$. With these weights the generating functions for the positive primes and negative primes can be written using $M(x, s, t)$ by
\begin{align} \notag
	\pr^{+}(x, a, b) &= x M(x, 1, a)\\ \notag
	\pr^{-}(x, a, b) &= b x M(x, b, 1).
\end{align}
	We can write any path $p \in \mathcal{P}(n-1,1,2)$ that starts with a down step in the form 
	\[p = n_0 (p_1 q_1 \cdots p_{n} q_{n}) p_*,\] 	
	where $n_0$ is the first nonempty negative path, each $p_i$ is a nonempty positive path and each $q_i$ is a nonempty negative path, and $p_*$ is the last positive path that ends at height $2$. These have the generating functions 
\begin{align} \notag
	M^+(x, a, b) &= \dfrac{\pr^{+}(x, a, b)}{1-\pr^{+}(x, a, b)} \\ \notag
	M^-(x, a, b) &= \dfrac{\pr^{-}(x, a, b)}{1-\pr^{-}(x, a, b)} \\ \notag
	M^* (x, a, b) &= x M(x, 1, a) M(x, a, 1)(1+M^+(x, a, b)).
\end{align}
	So we can write the generating function (denoted by $G(x,a,b)$) for the paths $p$ in $\mathcal{P}(n,1,2)$ that start with a down step as
\begin{equation} \notag
\begin{aligned} 
	G(x, a, b) &= M^-(x, a, b) \dfrac{1}{1 -M^+(x, a, b) M^-(x, a, b)} M^*(x, a, b) \\ 			
			&= \dfrac{  b x^2 M(x,b,1) M(x, a,1) M(x,1,a)}{1-x M(x,1,a) - b x M(x,b,1)}
\end{aligned}
\end{equation}
	Using \eqref{e35}  and the identity \eqref{id2} we can write $G(x,a,b)$ as 
\begin{equation} \label{e36}
\begin{aligned} 
	G(x, a, b) &=  \dfrac{b (E(x, a) - E(x, b))}{a - b}\\
			&= b \sum_{1 \le k \le n} N(n+1, k+1) x^{n+1} (a^{k-1} + a^{k-2} b+ \cdots + b^{k-1})\\
		          &= b \sum_{2 \le k \le n-1} N(n, k) x^{n} (a^{k-2} + a^{k-3} b+ \cdots + b^{k-2}).
\end{aligned}
\end{equation}

	Making use of \eqref{e32} and the definition of $E(x, y)$ we find that the coefficient of  $x^{n} a^{k-1-j} b^{j}$, for $j=1, \dots, k-1$ in the expansion of $G(x,a,b)$ in \eqref{e36} is the Narayana number $\tfrac{1}{k-1} \binom{n-1}{k-2} \binom{n}{k}$.

\chapter{Chung-Feller theorems for generalized paths}

In this section we'll consider paths in $\mathcal{P}(n,r,h)$ that go up by one and down by any amount $r>1$ ending at height $h$. We'll use one of the most famous and powerful tools in combinatorics called the Lagrange inversion \cite{qlag, gessel} method to derive some of the generalized formulas. 
\begin{lemma} (Lagrange Inversion) 
Let $g(u)=\sum_{n=0}^{\infty} g_n u^n$, where $g_n$ are indeterminates, and let $f(x)$ be the formal power series in $g_n$ defined by \[f(x)=x g(f(x)).\] Then, for $k>0$,
\begin{equation} \notag
f^k(x) = \sum_{n=1}^{\infty} \dfrac{k}{n} \left[ u^{n-k}\right] g(u)^n.
\end{equation} 
If $\phi(t)$ is a formal Laurent series then another variation of this formula gives
\begin{equation} \notag
\left[ x^{n}\right] \phi(f) = \left[ u^{n}\right] (1- u g'(u)/g(u)) \phi(u) g(u)^n.
\end{equation} 
\end{lemma}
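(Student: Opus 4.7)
The plan is to derive both formulas from the formal residue calculus combined with the change-of-variables identity for formal Laurent series. Write $\operatorname{Res}_u F(u)$ for the coefficient $[u^{-1}] F(u)$ of a formal Laurent series, so that in particular $[u^m] F(u) = \operatorname{Res}_u u^{-m-1} F(u)$. The defining equation $f(x) = x g(f(x))$ inverts to $x = f/g(f)$, exhibiting the formal power series $u \mapsto u/g(u)$ as the compositional inverse of $u = f(x)$. This inversion is legitimate in the universal ring $\mathbb{Z}[g_0, g_1, \dots][[x]]$: one checks by induction on coefficients that $f$ exists and is unique, and that substitution of $u = f(x)$ into Laurent series in $u$ is well defined in the $x$-adic topology.

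For the first formula, I would start from the identity $[x^n] f(x)^k = \operatorname{Res}_x x^{-n-1} f(x)^k$ and change variables via $u = f(x)$, so that $x = u/g(u)$ and the Jacobian is $dx/du = (g(u) - u g'(u))/g(u)^2$. Substitution then gives
\[
[x^n] f(x)^k = \operatorname{Res}_u u^{k-n-1}\, g(u)^{n-1}\bigl(g(u) - u g'(u)\bigr).
\]
I would split the right-hand side into two residues, rewrite $g(u)^{n-1} g'(u) = \tfrac{1}{n}(g(u)^n)'$, and apply the formal integration-by-parts identity $\operatorname{Res}_u (FG)' = 0$ to the second piece with $F(u) = u^{k-n}$ and $G(u) = g(u)^n$. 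This yields $\operatorname{Res}_u u^{k-n}(g^n)' = (n-k)\operatorname{Res}_u u^{k-n-1} g^n$, and combining the two pieces collapses the expression to $\tfrac{k}{n}\operatorname{Res}_u u^{k-n-1} g(u)^n = \tfrac{k}{n}[u^{n-k}] g(u)^n$, as claimed.

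For the variation with a general Laurent series $\phi$, the same change of variables applied directly to $[x^n]\phi(f) = \operatorname{Res}_x x^{-n-1}\phi(f(x))$ produces
\[
[x^n]\phi(f) = \operatorname{Res}_u u^{-n-1}\bigl(1 - u g'(u)/g(u)\bigr)\phi(u)\, g(u)^n,
\]
which is precisely the stated formula. The main technical obstacle is not the algebra but establishing the formal change-of-variables lemma itself: in the analytic setting this is the residue theorem, but in the purely formal universal setting one must verify the identity on the generating set $\{u^m : m \in \mathbb{Z}\}$ and then extend by linearity and $x$-adic continuity, taking some care because the Laurent series $u^{-n-1}\phi(u) g(u)^n (1 - u g'(u)/g(u))$ can involve infinitely many negative powers of $u$. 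Once that lemma is in place, the rest of the derivation is purely mechanical.
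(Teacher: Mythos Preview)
The paper does not prove this lemma; it is stated as a classical result with citations to the literature (\cite{qlag, gessel}) and then used as a tool throughout Chapter~5. Your derivation via the formal residue calculus and the change-of-variables identity $\operatorname{Res}_x F(h(x))\,h'(x) = \operatorname{Res}_u F(u)$ is the standard textbook proof and is correct as written: the computation for $f^k$ is clean, the integration-by-parts step is exactly the right mechanism for collapsing the two residues to $\tfrac{k}{n}[u^{n-k}]g(u)^n$, and the second formula does indeed fall out immediately from the same substitution before one performs any integration by parts. Your caveat about justifying the formal change of variables when the integrand has infinitely many negative powers is well placed --- in the universal setting with $g_0$ an indeterminate one works over $\mathbb{Z}[g_0^{\pm 1}, g_1, g_2, \dots]$ so that $g(u)^{-1}$ and hence $x = u/g(u)$ are honest power series in $u$, and the change-of-variables lemma is then proved by checking it on monomials $u^m$ and passing to infinite sums by $x$-adic continuity, exactly as you indicate.
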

Here $\left[ x^{n}\right] \phi(f)$ denotes the coefficient of 
$x^{n}$ in $\phi(f)$. The above formula is of great importance  in enumeration since many counting problems lead to equations of the form  $f(x)=x g(f(x))$.

\clearpage
\section{Versions of generalized Catalan number formula 1}\label{sec:twelveth}

	Let $f_{h}(x)$ be the generating function for paths in $\mathcal{P}(n,r,h,+)$ that stay strictly above the $x$-axis where each up step has weight $1$ and each down step has weight $x$. 
\begin{figure}[ht]
	\centering
		\includegraphics[width=0.80\textwidth]{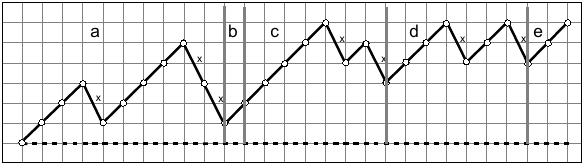}
	\caption{A path in $\mathcal{P}(7,2,6,+)$ decomposed into parts $a, b, c, d, e$}
	\label{fig:latticerk}
\end{figure}
	We can uniquely decompose any path in $\mathcal{P}(n,r,h,+)$ into $h$ consecutive parts where each part is 
	a path in $\mathcal{P}(n,r,1,+)$. As shown in the figure we look at the first part of the path that returns to 
	height $1$ for the last time and remove it. This part (denoted by $a$ in the figure) of the path is in 
	$\mathcal{P}(n,r,1,+)$. Then we remove the next part (part $b$) that ends at height $2$ and so on. This is 
	possible because the rightmost vertex at each level $l \le r$ must be a vertex of the path, i.e., either the endpoint of an up step or the endpoint of a down step. Otherwise it would have to be in the middle of a down 
	step, but then the path ends at height $h \ge l$ so it would have to return later to this height. Therefore the 
	generating function can be written as $f_{h}(x) = f_{1}^{h} (x) = f^{h} (x)$, where $f(x) = f_{1} (x)$ is the 
	generating function for paths in $\mathcal{P}(n,r,1,+)$.

	Now let us consider the paths in $\mathcal{P}(n,r,1,+)$. If a path in $\mathcal{P}(n,r,1,+)$ does not 
	have a down step then it consists of a single up step. Otherwise it ends with a down step and if we remove 
	the last down step we get a path that ends at height $r+1$. So we get the functional equation 
\begin{equation} \label{e6.1}
	f(x) = 1 + x f^{r+1} (x).
\end{equation}
	To solve this by Lagrange inversion we can put in a new redundent variable $z$ and solve the equation \[f(x,z)=z(1+x f^{r+1} (x, z))\] and then set $z=1$ to get 
\begin{equation} \label{e6.2}	
\begin{aligned}	 
	f^{h}(x) &= \sum_{m=h}^{\infty} \frac{h}{m} [t^{m-h}] (1+x t^{r+1})^{m}\\ 
			&= \sum_{m=h}^{\infty} \frac{h}{m} [t^{m-h}] \sum_{n} \binom{m}{n} x^{n} t^{(r+1)n}\\ 
			&= \sum_{n} \frac{h}{(r+1)n+h} \binom{(r+1)n+h}{n} x^{n}, \ \ \textrm{taking} \ m=(r+1)n+h. 
\end{aligned}
\end{equation}
	The coefficients are known as $r$-ballot numbers. In particular we have
\begin{equation} \label{e6.3}
	f(x) = \sum_{n=0}^{\infty} C_n^r x^{n}
\end{equation}
	where we define $C_n^r$ by \[C_n^r =  \dfrac{1}{(r+1)n+1} \binom{(r+1)n+1}{n}.\] Notice that for $r=1$ the 	coefficients reduce to Catalan numbers. The numbers $C_n^r$ are called generalized Catalan numbers or order $r+1$ Fuss-Catalan numbers \cite{fuss, nn}. These numbers were first studied by N. I. Fuss in 1791. They also arise in counting rooted plane 	trees with $r n+1$ leaves in which every non-leaf vertex has exactly $r+1$ children. By ``plane tree", we 	mean that the left-to-right order of children matters. Hilton and Pedersen \cite{oo} observed that $C_{r}^{n}$ counts the subdivisions of a convex polygon into $n$ disjoint $(r+1)$-gons by noncrossing diagonals.  

	These numbers can also be written in three different ways like the Catalan numbers as follows
	\[\dfrac{1}{(r+1)n +1} \binom{(r+1)n +1}{n} = \dfrac{1}{n} \binom{(r+1)n}{n-1} = \dfrac{1}{rn+1} \binom{(r+1)n }{n}. \] 
	Similar to the Catalan numbers we can give a nice combinatorial interpretation of these three formulas as follows:
\begin{theorem} \label{th11} \
\begin{enumerate}
\item The number of paths in $\mathcal{P}(n,r,1)$ that start with an up step with exactly $k$ up steps starting on or 
	below the $x$-axis for $k = 1, 2, \dots, r n + 1$ is given by $\tfrac{1}{rn+1} \binom{(r+1)n }{n}$.
 
\item The number of paths in $\mathcal{P}(n,r,1)$ that start with a down step with exactly $k$ down steps that start 
	on or below the $x$-axis for $k = 1, 2, \dots, n$ is given by $\tfrac{1}{n} \binom{(r+1)n}{n-1}$. 
 
\item The number of paths in  $\mathcal{P}(n,r,1)$ with exactly $k$ vertices on or below the $x$-axis 
	for $k = 1, 2, \dots, (r+1)n + 1$ is given by $\tfrac{1}{(r+1)n +1} \binom{(r+1)n +1}{n}$.
\end{enumerate}
\end{theorem}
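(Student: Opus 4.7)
The plan is to mirror the proof of Theorem \ref{t5}: apply Theorem \ref{spver} three times to paths in $\mathcal{P}(n,r,1)$, each time with a different choice of special vertices. A path $p \in \mathcal{P}(n,r,1)$ has $rn+1$ up steps of height $+1$ and $n$ down steps of height $-r$, so recording only its heights produces a sequence of length $(r+1)n+1$ whose entries are integers summing to $1$. This places us exactly in the setting of Theorem \ref{t2} and its geometric reformulation Theorem \ref{t3}, so the special-vertex machinery of Theorem \ref{spver} applies with no modification.

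For part (1), I would declare the initial vertex of every up step to be special. There are $rn+1$ such vertices, and a conjugate of $p$ begins at a special vertex precisely when it starts with an up step. Theorem \ref{spver} then asserts that among the $rn+1$ conjugates of $p$ that begin with an up step, exactly one has $k$ special vertices on or below the $x$-axis, for each $k = 1, \dots, rn+1$. Summing over the conjugacy classes, the paths starting with an up step are equidistributed across these $rn+1$ values. A path in $\mathcal{P}(n,r,1)$ starting with an up step is determined, after that initial step is removed, by an arbitrary arrangement of $rn$ up steps and $n$ down steps, so the total count is $\binom{(r+1)n}{n}$; dividing by $rn+1$ yields $\tfrac{1}{rn+1}\binom{(r+1)n}{n}$.

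For part (2), I would take the initial vertex of each down step as special. There are $n$ down steps, and removing the initial down step leaves an unrestricted word in $rn+1$ up steps and $n-1$ down steps, giving $\binom{(r+1)n}{n-1}$ paths that start with a down step; dividing by $n$ gives the second formula. For part (3), I would declare every vertex other than the last to be special, so every conjugate of $p$ starts at a special vertex; the total number of paths in $\mathcal{P}(n,r,1)$ is $\binom{(r+1)n+1}{n}$, and dividing by $(r+1)n+1$ yields the third formula.

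The only subtlety is confirming that the $(r+1)n+1$ cyclic shifts of the height sequence of a fixed $p$ are actually pairwise distinct, so that the ``one conjugate per value of $k$'' count really accounts for all paths without overcounting orbits. This is already built into the proof of Theorem \ref{t2}: the perturbed partial sums $S_i({\bf a}) = a_1 + \cdots + a_i - i/((r+1)n+1)$ have pairwise distinct fractional parts, which forces all $(r+1)n+1$ cyclic shifts to be distinct. Beyond this small point, I expect no obstacles; the three parts are direct repackagings of the special-vertex framework of Theorem \ref{t5} with the correct multinomial counts substituted in.
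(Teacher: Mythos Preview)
Your proposal is correct and follows essentially the same argument as the paper: apply Theorem \ref{spver} with special vertices chosen as the initial vertices of up steps, of down steps, and of all steps, respectively, and then divide the appropriate binomial count by the number of special vertices. Your remark on the distinctness of cyclic shifts is a welcome extra justification that the paper leaves implicit.
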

 
\begin{proof}
	Here we can apply Theorem \ref{spver} again to prove the above statements. Let $p$ be any path in $\mathcal{P}(n,r,1)$. To prove (1) we take the initial vertices of the up steps of $p$ as our special vertices. Since there are $r n + 1$ up steps, $p$ has $r n+1 $ conjugates that start with an up step. By Theorem \ref{spver} there is exactly one conjugate of $p$ with exactly $k$ up steps starting on or below the $x$-axis and we know that the number of paths in $\mathcal{P}(n,r,1)$ that start with an up step is given by 	the binomial coefficient $\binom{(r+1)n}{n}$. Therefore the number of paths starting with an up step and 	having $k$ up steps on or below the $x$-axis is given by $\tfrac{1}{rn+1} \binom{(r+1)n }{n}$. The proofs of parts two and three follow similarly, taking the initial vertices of the down steps as special vertices and all vertices as special vertices, respectively.
\end{proof}

	It is noteworthy to mention here the following corollary which is the classical analogue of the generalized 	version of the Chung-Feller theorem.
\begin{cor}
	The number of paths in $\mathcal{P}(n,r,0)$, with exactly $k$ up steps below the $x$-axis is independent of 	$k$ for $k = 0, 1, 2, \dots, r n$ and is given by $\tfrac{1}{rn+1} \binom{(r+1)n }{n}$.
\end{cor}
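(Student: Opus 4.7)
The plan is to deduce this corollary from Theorem~\ref{th11}(1) via a simple bijection that converts paths in $\mathcal{P}(n,r,0)$ into paths in $\mathcal{P}(n,r,1)$ starting with an up step, while transforming the statistic in a transparent way. This mirrors the remark following Theorem~\ref{t5} that showed how part (1) of that theorem specializes to the classical Chung--Feller theorem.

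First I would define the map $\Phi$ sending a path $p' \in \mathcal{P}(n,r,0)$ to the path $p \in \mathcal{P}(n,r,1)$ obtained by prepending an up step to $p'$ and then shifting the resulting path up by one unit (equivalently, translating $p'$ by the vector $(1,1)$ and attaching the up step from $(0,0)$ to $(1,1)$). Since paths in $\mathcal{P}(n,r,0)$ consist of $rn$ up steps and $n$ down steps, $\Phi(p')$ consists of an initial up step followed by $rn$ up steps and $n$ down steps, which is exactly the description of an element of $\mathcal{P}(n,r,1)$ beginning with an up step. The inverse map deletes the initial up step and shifts down, so $\Phi$ is a bijection onto this subset of $\mathcal{P}(n,r,1)$.

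Next I would track the statistic. Under $\Phi$, the initial up step of $p$ starts at $(0,0)$ and therefore always contributes one to the count of up steps starting on or below the $x$-axis, while the remaining up steps of $p$ start on or below the $x$-axis precisely when the corresponding up steps of $p'$ start strictly below the $x$-axis (the shift by $(1,1)$ sends ``below'' to ``on or below''). Consequently, if $p'$ has exactly $k$ up steps below the $x$-axis, then $p = \Phi(p')$ has exactly $k+1$ up steps starting on or below the $x$-axis, and $\Phi$ restricts to a bijection between the $k$-th fiber on the left and the $(k+1)$-st fiber on the right.

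Finally, applying Theorem~\ref{th11}(1) with the index $k+1$ ranging over $\{1,2,\dots,rn+1\}$ (which corresponds to $k$ ranging over $\{0,1,\dots,rn\}$) gives that each fiber has cardinality $\tfrac{1}{rn+1}\binom{(r+1)n}{n}$, independent of $k$. There is no genuine obstacle here since Theorem~\ref{th11}(1) does all the combinatorial work; the only thing to be careful about is the off-by-one in the bookkeeping, i.e., that the prepended up step accounts for exactly one extra ``on or below'' contribution and thereby shifts the index range by one.
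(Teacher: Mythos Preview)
Your proposal is correct and is essentially the same argument as the paper's own proof: the paper removes the first up step from a path in $\mathcal{P}(n,r,1)$ and shifts down one level, while you describe the inverse map (prepend an up step and shift up), with the same bookkeeping that ``on or below'' becomes ``below'' and the index shifts by one.
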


\begin{proof}
	The technique used to prove the classical version applies here too. If we remove the first up step of the paths 	in Theorem \ref{th11}(1) and shift them down one level then the up steps on or below the $x$-axis becomes 	up steps below the $x$-axis and we have paths in $\mathcal{P}(n,r,0)$ that satisfy the corollary. 
\end{proof}
\clearpage
\section{The generating function approach}\label{sec:thirteenth}
We can also prove Theorem \ref{th11} using generating functions. In this section we'll give a sketch of the proof of Theorem \ref{th11}(1) and \ref{th11}(2) and omit the proof of \ref{th11}(3).

Let $t$ be the weight on the up steps. Then the generating function for paths in $\mathcal{P}(n,r,0,+)$ has the form
\begin{equation} \label{e6.4}
	C_g(y, t) = \sum_{n=0}^{\infty} C_n^r y^{n} t^{r n}.
\end{equation}
 So \[ C_g(y, t)=f(y t^r).\]	
Since we are interested in up steps that start on or below the $x$-axis we weight them by $s$ and the up steps that start above the $x$-axis will have weight $t$. We also distinguish between the down steps that start on or below the $x$-axis (weighted by $x$) and the down steps that start above the $x$-axis (weighted by $y$). 
It is not hard to show that any path in $\mathcal{P}(n,r,0)$ can be uniquely factored into three different types of primes by looking at each time they return to the $x$-axis, as shown in Figure \ref{rprimes}. 

Let us consider positive prime paths in $\mathcal{P}(n,r,0)$, i.e., paths that stay above the $x$-axis and do not return to the $x$-axis till the end.  If we remove the last down step from one of these prime paths we get a path in $\mathcal{P}(n,r,r,+)$ that stay strictly above the $x$-axis. Therefore the generating function of the positive prime paths in $\mathcal{P}(n,r,0)$ is 
\begin{equation} \label{eq6.5}
	F_{p} = y s C_g(y,t) (t C_g (y, t))^{r-1}.
\end{equation}
Similarly the generating function of the negative prime paths in $\mathcal{P}(n,r,0)$ is 
\begin{equation} \label{eq6.6}
	F_{n} = x (s C_g(x, s))^r.
\end{equation}
	We also have another type of prime path in $\mathcal{P}(n,r, 0)$ that contains a down step that 
	\begin{figure}[h!]
	\centering
		\includegraphics[width=0.90\textwidth]{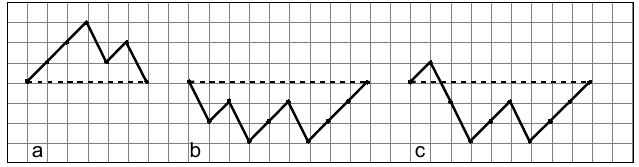}
	\caption{Primes in $\mathcal{P}(n,2,0)$. (a) A positive prime,  (b) a negative prime, and (c) a mixed prime.}
	\label{rprimes}
\end{figure}
cross the $x$-axis and is of the form $p_{r-i} D q_{i}$ for $i=1, \dots, r-1$  where $p_{r-i}$ is a strictly positive path in $\mathcal{P}(n,r, r-i,+)$ that starts on the $x$-axis and end at height $r-i$, $D$ is a down step and $q_{i}$ is a strictly negative path that start from height $-i$ and touches the $x$-axis at the end. These differ from the positive and negative primes because of the down step $D$ that crosses the $x$-axis. We call these mixed primes. The mixed primes have the generating function 
\begin{equation} \label{eq6.7}
	F_{m} = \sum_{i=1}^{r-1} s C_g(y,t) (t C_g (y, t))^{r-1-i} y (s C_g(x, s))^i.
\end{equation}
Note that we can combine the generating function of the positive primes and the mixed primes in to one formula as
\begin{equation} \label{eq6.71}
	F_{p+m} = \sum_{i=0}^{r-1} s C_g(y,t) (t C_g (y, t))^{r-1-i} y (s C_g(x, s))^i
\end{equation}
and the functional equation for $f(x s^r)$ is 
\[s f(x s^r)=s+ x s^{r+1} f^{r+1}(x s^r).\]
Which in terms of $C_g(x, s)$ is 
\[s C_g(x, s)= s + x (s C_g(x, s))^{r+1}.\]

	Using the generating function of the primes we express the generating function for the paths in $\mathcal{P}(n,r,0)$ (denoted by $G_{0} (x, s, y, t)$) as
\begin{equation} \label{eq6.8}
	G_{0} (x, s, y, t) = \frac{1}{1- F_{n} -F_{p+m}}.
\end{equation}
	Since any path in $\mathcal{P}(n,r,1)$ can be uniquely decomposed into two parts where the first part is a 
	path in $\mathcal{P}(n,r,0)$ and the last part is a path in $\mathcal{P}(n,r,1,+)$ using \eqref{eq6.8} we 
	can write the generating function for the paths in $\mathcal{P}(n,r,1)$ (denoted by $G_{1}(x, s, y, t)$) as 
\begin{equation} \label{eq6.9}
	G_{1} (x, s, y, t) = \frac{s C_g(y,t)}{1 - F_{n} -F_{p+m}}.
\end{equation}

	If we consider the paths in $\mathcal{P}(n,r,1)$ that start with a down step they must start with a negative 
	prime. Therefore the generating function is
\begin{equation} \label{eq7.1}
	G_{1D} (x, s, y, t) = F_{n} G_{1}(x, s, y, t) = \frac{s C_g(y,t) F_{n}}{1 - F_{n} -F_{p+m}}.
\end{equation} 

On the other hand the paths in $\mathcal{P}(n,r,1)$ that start with an up step must start with a positive prime or a mixed prime. Therefore the generating function is 
\begin{equation} \label{eq7}
	G_{1U} (x, s, y, t) = G_{1}(x, s,y, t) - F_{n} G_{1}(x, s,y, t) = \frac{(1-F_n ) s C_g(y,t)}{1- F_{n} -F_{p+m}}.
\end{equation}

To show that the coefficient of \eqref{eq7.1}, \eqref{eq7} are the generalized Catalan numbers we need the following identities which are easy to prove.

\begin{equation}  \label{eq7.2}	
\begin{aligned}
	\sum_{n=0}^{\infty} C_n^r \sum_{i=0}^{r n} t^i s^{r n-i} &=\dfrac{t C_g(1,t) - s C_g(1,s)}{ t - s}\\ 
	\sum_{n=0}^{\infty} C_{n+1}^r \sum_{i=0}^{ n} x^i y^{n-i} &= \dfrac{C_g(x, 1) - C_g(y, 1)}{ x -y}
\end{aligned}
\end{equation}

The following relations are equivalent to Theorem \ref{th11}(1) and \ref{th11}(2).
\begin{equation}
\begin{aligned}
	G_{1U}(1,s,1,t)  &= \sum_{n=0}^{\infty} C_n^r \sum_{i=0}^{r n} s^{i+1} t^{r n-i}\\  
	G_{1D}(x,1,y,1) &= \sum_{n=0}^{\infty} C_{n+1}^r \sum_{i=0}^{ n} x^{i+1} y^{n-i}.
\end{aligned}
\end{equation}
	Similar to the proof of Theorem \ref{e13} we can algebraically show the following results
\begin{equation}
\begin{aligned} 
	\frac{(1-F_n ) s C_g(1,t)}{1- F_{n} -F_{p+m}} &=  \dfrac{(1-(s C_g(1, s))^r ) s C_g(1,t)}{1 - (s C_g(1, s))^r- \sum_{i=0}^{r-1} s C_g(1,t) (t C_g (1, t))^{r-1-i} (s C_g(1, s))^i} \\
	&= s \dfrac{ t C_g(1,t^r) - s C_g(1,s^r) }{ t - s}  
\end{aligned}
\end{equation}
and 
\begin{equation}
\begin{aligned} 
	  \frac{ C_g(y,1) F_{n}}{1 - F_{n} -F_{p+m}} &=  \dfrac{ C_g(y,1) x ( C_g(x, 1))^r}{1-x ( C_g(x, 1)^r - \sum_{i=0}^{r-1}  C_g(y,1) ( C_g (y, 1))^{r-1-i} y ( C_g(x, 1))^i}\\
	  &= x \dfrac{C_g(x, 1) - C_g(y, 1)}{ x -y}.
\end{aligned}
\end{equation}

	Therefore the coefficient of $t^{i+1} s^{r n-i}$ in $G_{1U}$ is independent of 
	$i$ for $0 \le i \le r n$ and is $C_n^r = \tfrac{1}{rn+1} \binom{(r+1)n }{n}$ and the coefficient of $x^{j+1} y^{n-j}$ in $G_{1D}$  is independent of $j$ for $0 \le j \le n$ and is $C_n^r = \tfrac{1}{n} \binom{rn}{n-1}$ respectively.
	\clearpage
\section{Versions of generalized Catalan number formula 2}\label{sec:forteenth}
In this section we show another way to generalize the Catalan number formula. If we consider the up steps to have weight $1$ and the down steps to have weight $x$ then $x f^{r}(x)$ is the generating function of the positive primes in $\mathcal{P}(n,r, 0)$. It is interesting to see that the generating function $x f^{r}(x)$ can be expressed in a different way by rewriting \eqref{e6.1} as  
\begin{equation} \notag
	x f^{r}(x) = 1 - f^{-1}(x).
\end{equation}
	Then the Lagrange inversion formula gives
\begin{align} \notag
	 f^{-1}(x) &= \sum_{n=0}^{\infty} \dfrac{-1}{(r+1)n-1} \binom{(r+1)n-1}{n} x^{n} \\ \notag
 			&= 1- \sum_{n=1}^{\infty} \dfrac{1}{(r+1)n-1} \binom{(r+1)n-1}{n} x^{n}
\end{align}
	So we get 
    \begin{equation} \notag
    x f^{r}(x) = \sum_{n=1}^{\infty} \dfrac{1}{(r+1)n-1} \binom{(r+1)n-1}{n} x^{n}.
    \end{equation}
	Note that  for $r=1$ the coefficients are just the Catalan 
	numbers. But for $r>1$, these are not the same as the coefficients in \eqref{e6.3}.

	These numbers can also be written in three different forms as follows:
	\[\dfrac{1}{(r+1)n -1} \binom{(r+1)n -1}{n} = \dfrac{1}{n} \binom{(r+1)n - 2}{n-1} = \dfrac{1}{r n-1} \binom{(r+1)n - 2}{n}. \] 

	Given such a prime path if we remove the first step (an up step) and shift the path down one level then we 
	have a path that starts at the origin and ends at height $-1$, i.e., paths in $\mathcal{P}(n,r,-1)$. Each of 
	these paths has $r n -1$ up steps and $n$ down steps. So we can use the cycle method to get the following 
	Chung-Feller theorems for them.

\begin{theorem} \label{th12} \ 
\begin{enumerate}
 \item The number of paths in $\mathcal{P}(n,r,-1)$ that start with an up step with exactly $k$ up steps starting on 
 	or above the $x$-axis for $k = 1, 2, \dots, r n - 1$ is given by $\tfrac{1}{r n-1} \binom{(r+1)n-2}{n}$.
 
  \item The number of paths in $\mathcal{P}(n,r,-1)$ that start with a down step with exactly $k$ down steps that 
  	start on or above the $x$-axis for $k = 1, 2, \dots, n$ is given by $\tfrac{1}{n} \binom{(r+1)n-2}{n-1}$. 
 
 \item The number of paths in  $\mathcal{P}(n,r,-1)$ with exactly $k$ vertices on or above the $x$-axis for 
 	$k = 1, 2, \dots, (r+1)n - 1$ is given by $\tfrac{1}{(r+1)n -1} \binom{(r+1)n -1}{n} $.
\end{enumerate}
\end{theorem}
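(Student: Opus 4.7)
The plan is to mirror the proof of Theorem \ref{th11} by running the cycle method ``upside down'' on paths that end at height $-1$ rather than height $+1$, and tracking vertices on or above the $x$-axis rather than on or below. The first step is to record a dual of Theorem \ref{t2}: if $(a_1,\dots,a_N)\in\mathbb{Z}^N$ has total sum $-1$, then for each $k$ with $1\le k\le N$ there is exactly one conjugate with exactly $k$ nonnegative partial sums. This follows immediately by applying Theorem \ref{t2} to the negated sequence $(-a_1,\dots,-a_N)$, whose partial sums are the negatives of those of ${\bf a}$ and whose total sum is $+1$; negation interchanges nonpositivity and nonnegativity, and the cyclic-shift structure is preserved. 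The corresponding dual of Theorem \ref{spver} follows by repeating its proof with this new cycle lemma: for a path $p$ ending at height $-1$ with $k$ designated special vertices, the numbers of special vertices on or above the $x$-axis, taken over the $k$ conjugates beginning at a special vertex, form exactly the set $\{1,2,\dots,k\}$.

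With this dual in hand, each part of Theorem \ref{th12} reduces to a choice of special vertices and a total count, just as in the proof of Theorem \ref{th11}. A path in $\mathcal{P}(n,r,-1)$ has $rn-1$ up steps and $n$ down steps, for a total of $(r+1)n-1$ vertices before the endpoint. For part~(1) I would take the initial vertices of the up steps as special vertices; the number of paths in $\mathcal{P}(n,r,-1)$ beginning with an up step is $\binom{(r+1)n-2}{n}$ (place the $n$ down steps among the remaining $(r+1)n-2$ positions), and each such path has $rn-1$ conjugates beginning with an up step, yielding $\tfrac{1}{rn-1}\binom{(r+1)n-2}{n}$. For part~(2) I would take the initial vertices of the down steps; there are $\binom{(r+1)n-2}{n-1}$ paths beginning with a down step and $n$ conjugates of each. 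For part~(3) I would take all vertices; the total number of paths is $\binom{(r+1)n-1}{n}$ and there are $(r+1)n-1$ conjugates. In every case the equidistribution provided by the dual special-vertex theorem delivers the claimed formula.

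The only delicate point is verifying that the negation argument really produces the promised variant of the cycle lemma in the geometry where the step sizes are $1$ and $-r$ rather than $\pm 1$. However Theorem \ref{t2} is proved purely at the level of integer sequences and is insensitive to the specific values taken by the $a_i$, so the dual transfers without change. I do not foresee any real obstacle; once the dual cycle lemma is noted, the rest is a direct application of the template already used for Theorem \ref{th11}.
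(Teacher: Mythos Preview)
Your proposal is correct and is essentially the paper's own argument: the paper reflects the paths in $\mathcal{P}(n,r,-1)$ about the $x$-axis to obtain paths ending at height $1$ with step set $\{(1,r),(1,-1)\}$, and then invokes Theorem \ref{spver} directly---which is exactly your negation of the sequence $(a_1,\dots,a_N)$ carried out geometrically rather than algebraically. The choice of special vertices and the binomial counts in each part match the paper's reasoning verbatim.
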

 
\begin{proof}
	If we reflect the paths in $\mathcal{P}(n,r,-1)$ about the $x$-axis we get paths starting at the origin and 
	ending at height $1$ with steps that go up by $r$ and down by $1$, i.e. paths with step set 
	$\{ (1, r), (1, -1) \}$. Let us denote the set of such paths by $\mathcal{P}^*(n,r,1)$. Now the above 
	statements can be restated as
\begin{enumerate} 
\item The number of paths in $\mathcal{P}^*(n,r,1)$ that start with a down step with exactly $k$ down steps that 
	start on or below the $x$-axis for $k = 1, 2, \dots, r n-1$ is given by  $\tfrac{1}{r n-1} \binom{(r+1)n-2}{n}$.
\item The number of paths in $\mathcal{P}^*(n,r,1)$ that start with an up step with exactly $k$ up steps starting 
	on or below the $x$-axis for $k = 1, 2, \dots, n$ is given by $\tfrac{1}{n} \binom{(r+1)n-2}{n-1}$. 
\item The number of paths in  $\mathcal{P}^*(n,r,1)$ with exactly $k$ vertices on or below the $x$-axis for 
	$k = 1, 2, \dots, (r+1)n - 1$ is given by $\tfrac{1}{(r+1)n -1} \binom{(r+1)n -1}{n} $.
\end{enumerate}
	So the proof also follows from Theorem \ref{spver} with the same reasoning as the proof of Theorem \ref{th11}.
\end{proof}

In the next two section we'll consider two types of generalization of the Narayana numbers. The classical Narayana numbers are represented as a product of two binomial coefficients. Considering paths in $\mathcal{P}(n,r,h)$ we can generalize them either as a product of two binomial coefficients or as a product of $r+1$ binomial coefficients. We'll consider the former one in the next setion and the later in section \ref{sec:sixteenth}. We'll give combinatorial interpretation of both generalizations. 
\clearpage
\section{Peaks and valleys}\label{sec:fifteenth}

It is natural to ask how many paths are there in $\mathcal{P}(n,r,h)$ with a given number of peaks and valleys.
To do that first we look at non-negative paths in $\mathcal{P}(n,r,0)$, i.e. paths that stay weakly above the $x$-axis. Let $F$ be the generating function for the non-negative paths in $\mathcal{P}(n,r,0)$ with weight $x$ on the down steps and weight $t$ on the peaks defined by 
\begin{equation} \notag
F = \sum_n x^{n} t^{\pk} 
\end{equation}
where $\pk$ stands for number of peaks. If $P$ is the generating function for the primes then we can write $F$ in terms of $P$ as 
\begin{equation} \label{primes}
F=\dfrac{1}{1-P}.
\end{equation}  
Now the total number of peaks in such a path is the sum of the peaks of its primes. Let $p$ be such a prime path. So we decompose $p$ in the following way: Since $p$ does not return to the $x$-axis till the end we look at the last time it leaves height one, height two, and so on. The first part of the path consists of an up step followed by a positive path, the second part of the path consists of an up step followed by another positive path, \dots, and the last part consists of a path that starts with an up step and ends at height $r-1$ followed  by a down step. So the prime $p$ can be factored as \[p = U p_1 U p_2 \cdots U p_r D\] where $p_i$ is a positive path in $\mathcal{P}(n_i,r,0)$ for some $n_i$. The number of peaks of $p$ is the sum of the number of peaks of the $p_i$'s plus one more if $p_r$ is empty.
\begin{figure}[h]
	\centering
		\includegraphics[width=0.80 \textwidth] {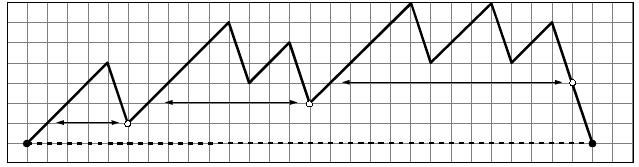}
	\caption{A prime path for $r=3$}
	\label{fig:psteppath}
\end{figure}
So we can write the generating function of the primes as
\begin{equation} \notag
P=x F^{r-1} (F-1+t)
\end{equation}
which together with \eqref{primes} gives the functional equation
\begin{equation} \notag
F = 1 + x F^r(F-1+t). 
\end{equation}
Setting $F=1+G$ and replacing $P$ by $\tfrac{G}{1+G}$ we get 
\begin{equation} \notag
G=x(1+G)^{r}(t+G).
\end{equation}
Applying the second form of the Lagrange inversion given at the beginning of this chapter and taking 
$\phi(G)=1+G$ and $g(u)=(1+u)^{r}(t+u)$ we get 
\begin{equation} \notag
\begin{aligned}
F &=1+G = \sum_n \left[u^n \right] \left(1- \dfrac{u((1+u)^{r}(t+u))'}{(1+u)^{r}(t+u)}\right) (1+u)((1+u)^{r}(t+u))^n x^n\\
   &= \sum_{n,k} \dfrac{1}{r n -k+1}\binom{r n}{k} \binom{n-1}{k-1} t^k x^n.
\end{aligned}
\end{equation}

We denote these coefficients by $N_r(n,k)= \tfrac{1}{r n -k+1}\binom{r n}{k} \binom{n-1}{k-1}$. Note that $N_1(n,k)$ gives us our familiar Narayana numbers. We also find that these coefficients can be written in five different forms as 
\begin{align*}
N_r(n,k)&= \dfrac{1}{n}\binom{r n}{k-1} \binom{n}{k} = \dfrac{1}{r n-k+1}\binom{r n}{k} \binom{n-1}{k-1}\\
	&=\dfrac{1}{k}\binom{r n}{k-1} \binom{n-1}{k-1} = \dfrac{1}{r n+1}\binom{r n+1}{k} \binom{n-1}{k-1}\\
	&=\dfrac{1}{n-k}\binom{r n}{k-1} \binom{n-1}{k}.
\end{align*}
Note that just like the relation \eqref{nara} $N_r(n,k)$ and $C_n^r$ are also related by the equation
\begin{equation} \notag
\sum_{k=1}^{n} \dfrac{1}{n}\binom{r n}{k-1} \binom{n}{k} = \dfrac{1}{r n+1}\binom{(r+1) n}{n}. 
\end{equation}
As a generalization of Theorem \ref{t6} we can use the cycle method to give a combinatorial interpretation of each of these forms as well.
\begin{theorem} \ 
\begin{enumerate}
\item  The number of paths in  $\mathcal{P}(n, r, 1)$ with $k-1$ peaks that start with a down step and end with an up step with exactly $j$ peaks on or below the $x$-axis for $j = 0, 1, 2, \dots, k-1$ is given by $\tfrac{1}{k}\binom{r n}{k-1} \binom{n-1}{k-1} $.

\item The number of paths in  $\mathcal{P}(n, r, 1)$ with $k-1$ valleys
	that start with an up step and end with a down step with exactly 
	$j$ valleys on or below the $x$-axis for $j=0, 1, 2, \dots, k-1$  is given by 
    	$\tfrac{1}{k} \binom{r n}{k-1} \binom{n-1}{k-1} $.

\item The number of paths in  $\mathcal{P}(n, r, 1)$ with 
	$n-k$ double rises that start with an up step and end with an up step
with exactly $j$ double 
	rises on or below the $x$-axis for $j=0, 1, 2, \dots, r n-k$ is given by 
	$\tfrac{1}{r n-k+1} \binom{r n}{k}  \binom{n-1}{k-1}$.

\item The number of paths in  $\mathcal{P}(n, r, 1)$ with 
 	$n-k-1$ double falls that start with a down step and end with a down
step with exactly 
 	$j$ double falls on or below the $x$-axis for $j=0, 1, 2, \dots, n-k-1$ is given by 
 	$\tfrac{1}{n-k} \binom{r n}{k-1}  \binom{n-1}{k}$.

\item The number of paths in  $\mathcal{P}(n, r, 1)$ with $k$ peaks that start with 
	an up step with exactly $j$ up steps on or below the $x$-axis for $j= 1, 2, \dots, r n+1$ 
	is given by $\tfrac{1}{r n+1}\binom{r n+1}{k} \binom{n-1}{k-1}$.

\item The number of paths in  $\mathcal{P}(n, r, 1)$ with $k$ valleys that start 
	with a down step with exactly $j$ down steps on or below the $x$-axis for 
	$j= 1, 2, \dots, n$ is given by $\tfrac{1}{n}\binom{r n}{k-1} \binom{n}{k}$.
\end{enumerate}
\end{theorem}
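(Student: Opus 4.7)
The argument mirrors the proof of Theorem \ref{t6}: in each of the six cases we describe the relevant paths as concatenations of alternating $U$-runs and $D$-runs, enumerate them by a pair of positive integer compositions, and then invoke Theorem \ref{spver} with an appropriate choice of special vertices. Because every path in $\mathcal{P}(n,r,1)$ has $rn+1$ up steps and $n$ down steps, the raw count will always be a product of one binomial coefficient in $rn$ (or $rn+1$) and one in $n-1$ (or $n$), and the denominator will equal the number of conjugates of a given path that preserve the endpoint constraints.

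For Part 1, a path starting with $D$, ending with $U$, and having $k-1$ peaks decomposes uniquely as
\[
D^{j_0} U^{i_1} D^{j_1} \cdots U^{i_{k-1}} D^{j_{k-1}} U^{i_k},
\]
where $\sum_{l=1}^{k} i_l = rn+1$ and $\sum_{l=0}^{k-1} j_l = n$ with all parts positive; this produces $\binom{rn}{k-1}\binom{n-1}{k-1}$ paths. Cyclically, the wrap from the final $U$ to the initial $D$ contributes one more $UD$ pair, so each such path admits exactly $k$ conjugates beginning with $D$ and ending with $U$, one for each cyclic peak. Theorem \ref{spver} with peaks as special vertices then yields $\tfrac{1}{k}\binom{rn}{k-1}\binom{n-1}{k-1}$. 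Part 2 is the same argument with $U$ and $D$ swapped and valleys as special vertices.

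For Parts 3 and 4 the path starts and ends with the same letter. In Part 3 the form is $U^{i_1} D^{j_1} \cdots U^{i_k} D^{j_k} U^{i_{k+1}}$ with all parts positive, contributing $\binom{rn}{k}\binom{n-1}{k-1}$ choices; the linear double-rise count is $\sum_l(i_l-1)=rn-k$, and the cyclic wrap from the final $U$ to the initial $U$ adds one more $UU$, so there are $rn-k+1$ conjugates starting and ending with $U$. Applying Theorem \ref{spver} with double rises (counted cyclically) as special vertices gives $\tfrac{1}{rn-k+1}\binom{rn}{k}\binom{n-1}{k-1}$. Part 4 is symmetric: paths $D^{i_1} U^{j_1} \cdots U^{j_k} D^{i_{k+1}}$ give $\binom{n-1}{k}\binom{rn}{k-1}$ choices and $n-k$ conjugates that start and end with $D$.

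For Parts 5 and 6 only the first step is constrained. A Part 5 path $U^{i_1} D^{j_1} \cdots U^{i_k} D^{j_k} U^{i_{k+1}}$ with $i_{k+1}\ge 0$ and the other parts positive yields $\binom{rn+1}{k}\binom{n-1}{k-1}$ choices (substituting $i'_{k+1}=i_{k+1}+1$ makes all parts positive with total $rn+2$), and each path has $rn+1$ conjugates beginning with an up step; taking initial vertices of up steps as special vertices produces $\tfrac{1}{rn+1}\binom{rn+1}{k}\binom{n-1}{k-1}$. Part 6 is the analogous statement with $n$ conjugates per path. The only genuinely delicate point is the bookkeeping of the cyclic wrap-around contribution in Parts 3 and 4, which accounts for the ``$+1$'' in the denominators $rn-k+1$ and $n-k$; once that is handled, Theorem \ref{spver} does the rest.
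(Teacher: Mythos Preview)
Your proposal is correct and follows exactly the approach the paper intends: the paper does not give an explicit proof of this theorem but simply states that it is ``essentially the same as the proof of Theorem \ref{t6},'' i.e., decompose each path into alternating $U$- and $D$-runs, count via positive compositions, and apply Theorem \ref{spver} with the indicated special vertices. Your computations match the paper's pattern, and your double-rise count $rn-k$ in Part~3 is the correct value (the ``$n-k$'' in the theorem statement is evidently a typo, as confirmed by the range $j=0,\dots,rn-k$ and the denominator $rn-k+1$).
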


The proof is essentially same as the proof of Theorem \ref{t6} and uses similar arguments. Therefore we leave it to the reader. 
\clearpage
\section{A generalized Narayana number formula}\label{sec:sixteenth}

	Next we'll look at generalizations of the Narayana numbers for paths in  $\mathcal{P}(n-1,r, r+1)$. The interpretation we have found so far of the Narayana numbers in terms of Dyck paths is keeping track of up steps in even positions. In this section we are going to generalize this interpretation for paths in $\mathcal{P}(n-1,r, r+1)$. 

	The simplest way to approach this is to weight an up step in a position congruent to $i$ modulo $r+1$
	by $\alpha_{i}$, $i \ge 0$. To do this we take the steps $r+1$ at a time, i.e., we replace the original set of 
	two steps (up by $1$ and down by $r$) with a new set of $2^{r+1}$ steps that correspond to all possible paths 
	made of $r+1$ of the original steps. Note that each new step goes up (or down) by a multiple of $r+1$. 

	Now let us define a generating function for these new steps to be 
	\[ \mathcal{F} = \sum_{\sigma \in S} w(\sigma) t^{d(\sigma)},\] 
	where $S$ is the set of $2^{r+1}$ new steps, $w(\sigma)$ is the weight of $\sigma$ (defined in terms of the 
	original up steps comprising $\sigma$, where an up step in a position congruent to $i$ modulo $r+1$ has 
	weight $\alpha_{i}$) and $d(\sigma)$ is $\tfrac{1}{r+1}$ times the distance down that the new step goes. A new step that goes up by $r+1$ is interpreted as going down by $-(r+1)$.

	It is not hard to see that this generating function can be written as
\begin{equation}
	\mathcal{F} = t^{-1}(\alpha_{0} + t)(\alpha_{1} + t) \cdots (\alpha_{r} + t).
\end{equation}
	For example, when $r=1$ the $2^{2} = 4$ new steps are shown in Figure \ref{fig:stepset}.	
\begin{figure}[ht]
	\centering
		\includegraphics[width=0.80 \textwidth] {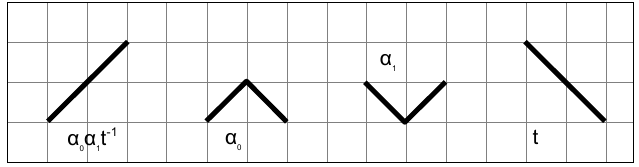}
	\caption{Step set for $r=1$}
	\label{fig:stepset}
\end{figure}

	To find a generating function for these paths we consider a more general situation. Suppose we want to count paths with steps that go up by $1$ or down by any nonnegative integer. We weight a step that goes down by $i$ with the weight $w_{i+1}$, where we think of an up step as a step that goes down by $-1$. Let $W(t) = \sum_{i=0} w_i t^i$. Let $F$ be the generating function for paths that stay strictly above the $x$-axis 
	and end at height $r+1$. Then the generating function for strictly positive paths that end at height $h(r+1)$ is $F^h$ 
	(including $h=0$) and by removing the last step of a path counted by $F$, we see that $F$ satisfies 
	$F=W(F)$.

	Returning to our original problem, we see that the generating function $F$ for paths weighted according to 
	the positions modulo $r+1$ of the up steps satisfies 
\begin{equation} \label{pmodr}
	F = (\alpha_0 + F)(\alpha_1 + F) \cdots (\alpha_r + F),
\end{equation} 
Applying Lagrange inversion to \eqref{pmodr} we get 
\begin{equation} \notag
	F^h = \sum_n \sum_{n_0 + \cdots + n_r = (n-1)r + h(r+1)} \frac{h}{n} \binom{n}{n_0} \cdots \binom{n}{n_r} \alpha_0^{n_0} \alpha_1^{n_1} \dots \alpha_r^{n_r}.
\end{equation}
So for $h=1$ we get 
 \begin{equation} \notag
F 	=  \sum_n \sum_{n_0 + \cdots + n_r = n r + 1} \frac{1}{n} \binom{n}{n_0} \cdots \binom{n}{n_r} \alpha_0^{n_0} \alpha_1^{n_1} \dots \alpha_r^{n_r}.
\end{equation}

	We denote these coefficient by $N^r(n, n_0, n_1, \dots, n_r) = \frac{1}{n} \binom{n}{n_0} \cdots \binom{n}{n_r} $, where 
	$n_0 + \cdots + n_r = n r + 1$. Consider the case $r=1$. Then $N^1(n, n_0, n_1)= \frac{1}{n} \binom{n}{n_0} 	\binom{n}{n_1} = \frac{1}{n} \binom{n}{n_0} \binom{n}{n_0-1}$ is our well known Narayana number. We 
	have given a combinatorial interpretation for the number $N^1(n,n_0, n_1)$ in Theorem \ref{th91}. Here we state a generalization of Theorem \ref{th91} using congruence. 
\begin{theorem} \label{nrnara} \ 
\begin{enumerate}
\item The number of paths in $\mathcal{P}(n-1,r,r+1)$ that start with a down step having exactly $n_i-1$ 
	down steps starting at positions congruent to $i \pmod{r+1}$ for each $i=0, \dots, r$, with $j$ down steps starting at positions congruent to $0 \pmod{r+1}$ on or below the $x$-axis is independent of $j$, $j=1, \dots, n_0-1$ and is given by the numbers $N^r(n, n-n_0+1,  \dots, n-n_r+1) = \tfrac{1}{n_0-1} \binom{n-1}{n_0-2}\binom{n}{n_1-1}  \cdots \binom{n}{n_r-1}$.
	
\item The number of paths in $\mathcal{P}(n-1,r,r+1)$ that start with an up step having exactly $n_i$ 
	up steps starting at positions congruent to $i \pmod{r+1}$ for each $i=0, \dots, r$, with $j$ up steps starting at positions congruent to $0 \pmod{r+1}$ on or below the $x$-axis is independent of $j$, $j=1, \dots, n_0$ and is given by the numbers $N^r(n,n_0,  \dots, n_r) = \tfrac{1}{n_0} \binom{n-1}{n_0-1}\binom{n}{n_1}  \cdots \binom{n}{n_r}$.
\end{enumerate}
\end{theorem}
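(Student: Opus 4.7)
The plan is to generalize the proof of Theorem~\ref{th91} via the natural bijection that groups each successive block of $r+1$ original steps into one \emph{super-step}, yielding a path with $2^{r+1}$ step types and height changes in $\{-r,-r+1,\dots,1\}$. Any path in $\mathcal{P}(n-1,r,r+1)$ has total length $(r+1)n$ and ends at height $r+1$, so its super-path has exactly $n$ super-steps and ends at height $1$. Crucially, the $i$-th internal step of the $(j+1)$-st super-step is the original step in position $j(r+1)+i$, so the original positions $\equiv i \pmod{r+1}$ correspond exactly to internal position $i$ of some super-step.

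For Part~1 I would take as \emph{special vertices} of a super-path the initial vertices of those super-steps whose first internal step is a down step; these are preserved under cyclic permutation of the super-path, and such a special vertex lies on or below the $x$-axis precisely when the corresponding original vertex does. The counting is routine: since the path starts with a down step, its initial super-step begins with a down step; the remaining $n_0-2$ down steps at positions $\equiv 0 \pmod{r+1}$ can be placed in $\binom{n-1}{n_0-2}$ ways, and for $i=1,\dots,r$ the $n_i-1$ down steps at positions $\equiv i \pmod{r+1}$ can be placed in $\binom{n}{n_i-1}$ ways each, giving a total of $\binom{n-1}{n_0-2}\prod_{i=1}^{r}\binom{n}{n_i-1}$ paths. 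Each such path has exactly $n_0-1$ super-conjugates whose leading super-step again begins with a down step, one for each of the $n_0-1$ original down steps at positions $\equiv 0 \pmod{r+1}$. Theorem~\ref{spver}, applied to the super-path with these special vertices, asserts that among those $n_0-1$ conjugates the number of special vertices on or below the $x$-axis realizes each value in $\{1,2,\dots,n_0-1\}$ exactly once. Dividing by $n_0-1$ yields the claimed formula $\tfrac{1}{n_0-1}\binom{n-1}{n_0-2}\prod_{i=1}^{r}\binom{n}{n_i-1}$.

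Part~2 is entirely symmetric: I would take the special vertices to be initial vertices of super-steps whose first internal step is an up step, count paths starting with an up step that have $n_i$ up steps at positions $\equiv i \pmod{r+1}$ (yielding $\binom{n-1}{n_0-1}\prod_{i=1}^{r}\binom{n}{n_i}$ total), and divide by $n_0$.

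The main obstacle is justifying that Theorem~\ref{spver} can be invoked in this exotic super-path setting, since the super-alphabet has $2^{r+1}$ letters with unusual heights. The theorem, however, only requires that the sequence of super-step heights sums to $1$ and that the special-vertex set be invariant under super-cyclic shifts, both of which are immediate from the construction. One further point needing verification is that cyclic shifts of the super-path correspond to rotations of the original path by multiples of $r+1$, so that each residue class of step positions mod $r+1$ is preserved and the counts $n_0,\dots,n_r$ remain constant across all super-conjugates—this is precisely what allows the argument to take place inside the fixed class of paths with a given multi-index $(n_0,\dots,n_r)$.
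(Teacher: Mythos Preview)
Your proposal is correct and follows essentially the same route as the paper: group the $(r+1)n$ original steps into $n$ super-steps, obtain a path ending at height~$1$, take as special vertices the initial vertices of super-steps whose first internal step has the prescribed type, and invoke Theorem~\ref{spver}. Your description of the super-alphabet (with $2^{r+1}$ step types and scaled height changes in $\{-r,\dots,1\}$) is in fact more accurate than the paper's phrasing in terms of ``$(r+1)$-colored free Motzkin paths,'' which strictly speaking fits only the case $r=1$; the underlying argument is identical.
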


\begin{proof}
We'll prove Theorem \ref{nrnara}(2) using the cycle method. Consider paths in $\mathcal{P}(n-1,r,r+1)$. They have $nr+1$ up steps and $n-1$ down steps, so a total of $n(r+1)$ steps. There are $n$ positions congruent to $i$ modulo $r+1$ for each $i=0,\dots,r$. So there are $\binom{n-1}{n_0-1}\binom{n}{n_1}  \cdots \binom{n}{n_r}$ paths in $\mathcal{P}(n-1,r,r+1)$ having $n_i$ up steps at positions congruent to $i$ modulo $r+1$ that start with an up step. Each such path has $n_0$ conjugates that start with an up step at positions that are a multiple of $r+1$. To apply our cycle method we convert these paths to $r+1$ colored free Motzkin paths by taking $r+1$ steps at a time, where an $r+1$ colored free Motzkin path is a path that lie in the half plane $\mathbb{Z}_{\ge 0}\times \mathbb{Z}$ having unit up steps $(1,1)$, unit down steps $(1,-1)$, and $r+1$ different colored unit flat steps. The resulting $r+1$ colored free Motzkin paths will end at height $1$.

Therefore as in the proof of Theorem \ref{th91}, we can deduce that the total number of paths in $\mathcal{P}(n-1,r,r+1)$ that start with an up step having exactly $n_i$ up steps starting at positions congruent to $i \pmod{r+1}$
with $j$ up steps on or below the $x$-axis is $\tfrac{1}{n_0} \binom{n-1}{n_0-1}\binom{n}{n_1}  \cdots \binom{n}{n_r}$. 

Similarly we get the result of Theorem \ref{nrnara}(1) by considering paths in $\mathcal{P}(n-1,r,r+1)$ that start with a down step having $n_i-1$ down steps at positions congruent to $i$ modulo $r+1$, where $\sum_{i=0}^{r} (n_i-1) = n-1$. 
\end{proof}

Note that when $r=1$ in Theorem \ref{nrnara}(1), the number of paths in $\mathcal{P}(n-1,1,2)$ that start with a down step having exactly $n_i-1$ down steps starting at positions congruent to $i \pmod{r+1}$ for $i=0,1$, with $j$ down steps congruent to $0 \pmod{r+1}$ on or below the $x$-axis is  $\tfrac{1}{n_0-1} \binom{n-1}{n_0-2}\binom{n}{n_1-1}$. Since the total number of down step is $n_0-1 + n_1-1 = n-1$, we have $n_1= n-n_0+1$, So the total number of paths is $N^1(n, n-n_0+1, n-n_1+1) = \tfrac{1}{n_0-1} \binom{n-1}{n_0-2}\binom{n}{n_1-1} = \tfrac{1}{n_0-1} \binom{n-1}{n_0-2}\binom{n}{n_0}$, which is exactly what we had in Theorem \ref{th91}(1).

\nocite{*}
\backmatter
\singlespacing
\bibliographystyle{acm}	
\bibliography{Diss_bib}		

\end{document}